\documentclass[11pt,a4paper,reqno]{amsart}

\usepackage[a4paper,lmargin=2.5cm,rmargin=2cm,tmargin=4cm,bmargin=4cm]{geometry}

\makeatletter
\g@addto@macro\bfseries{\boldmath} 
\makeatother

\usepackage[centertags]{amsmath}
\usepackage{amsfonts}
\usepackage{amssymb}
\usepackage{amsthm}

\usepackage{hyperref}
	\hypersetup{breaklinks=true,colorlinks=true,
linkcolor=MidnightBlue,citecolor=MidnightBlue,
urlcolor=MidnightBlue}

\usepackage{tikz}
\usepackage{tikz-cd}
\usepackage[normalem]{ulem}
\usepackage[shortlabels]{enumitem}

\usepackage[dvipsnames]{xcolor}

\usepackage{mathtools,dsfont}

\usepackage{mathrsfs}
\usepackage{graphicx}

\usepackage{orcidlink}

\newtheorem{theorem}{Theorem}[section]
\newtheorem{introthm}{Theorem}

\newtheorem{lemma}[theorem]{Lemma}

\newtheorem{proposition}[theorem]{Proposition}

\newtheorem{corollary}[theorem]{Corollary}

\newtheorem{introproblem}{Problem}

\theoremstyle{definition}
\newtheorem{definition}[theorem]{Definition}
\newtheorem{example}[theorem]{Example}
\newtheorem*{examplenonumber}{Example}

\theoremstyle{remark}
\newtheorem{remark}[theorem]{Remark}

\newtheorem{fact}[theorem]{Fact}

\numberwithin{equation}{section}

\newcommand{\R}{\mathbb{R}}

\newcommand{\N}{\mathbb{N}}

\newcommand{\vertiii}[1]{{\left\vert\kern-0.25ex\left\vert\kern-0.25ex\left\vert #1 
		\right\vert\kern-0.25ex\right\vert\kern-0.25ex\right\vert}}

\makeatletter
\newcommand{\markthis}[3]{% #1 = marker, #2 = label, #3 = relation
	\overset{% the marker and the label
		\textup{\makebox[0pt]{#1}}%
		\def\@currentlabel{#1}%
		\ltx@label{#2}%
	}{% the relation
		#3%
	}%
}

\DeclareMathOperator{\co}{co}

\DeclareMathOperator{\id}{Id}

\DeclareMathOperator{\sign}{sign}

\newcommand{\nn}[1]{{\left\vert\kern-0.25ex\left\vert\kern-0.25ex\left\vert #1 
		\right\vert\kern-0.25ex\right\vert\kern-0.25ex\right\vert}}
\renewcommand{\geq}{\geqslant}
\renewcommand{\leq}{\leqslant}

\newcommand{\restricted}{\mathord{\upharpoonright}}

\newcommand{\NA}{\operatorname{NA}}
\newcommand{\QNA}{\operatorname{QNA}}
\newcommand{\spann}{\operatorname{span}}

\newcommand{\supp}{\operatorname{supp}}

\newcommand{\wot}{\operatorname{WOT}}

\newcommand{\eps}{\varepsilon}

\newcommand{\weakstar}{\mathcal{L}_{w^*, w^*}(Y^*, X^*)}
\newcommand{\weakstarfiniterank}{\mathcal{F}_{w^*, w^*}(Y^*, X^*)}
\newcommand{\propertystar}{(P^\star)}
\newcommand{\propertystarstar}{(P^{\star\star})}

\newcounter{smallromans}

	{\end{list}}

\begin{document}

	\title[On operators whose adjoints or second adjoints attain their norms]{On operators whose adjoints or second adjoints attain their norms}

\date{\today}

\author[Dantas]{Sheldon Dantas\orcidlink{0000-0001-8117-3760}}
\address[Dantas]{Czech Technical University in Prague, FEE, Department of Mathematics, Technick\'a 2, 16627, Prague 6, Czech Republic. \newline
\href{https://orcid.org/0000-0001-8117-3760}{ORCID: \texttt{0000-0001-8117-3760}}}
\email{\texttt{sheldon.dantas@fel.cvut.cz}}
\urladdr{www.sheldondantas.com}

	\author[Jung]{Mingu Jung\orcidlink{0000-0003-2240-2855}}
\address[Jung]{Department of Mathematics \& Research Institute for Natural Sciences, Hanyang University, 04763 Seoul, Republic of Korea. \newline
\href{http://orcid.org/0000-0003-2240-2855}{ORCID: \texttt{0000-0003-2240-2855} }}
\email{mingujung@hanyang.ac.kr}

 	\author[Martín]{Miguel Martín\orcidlink{0000-0003-4502-798X}}
 \address[Martín]{Department of Mathematical Analysis and Institute of Mathematics (IMAG), University of Granada, E-18071 Granada, Spain. \newline
 	\href{https://orcid.org/0000-0003-4502-798X}{ORCID: \texttt{0000-0003-4502-798X}}}
 \email{\texttt{mmartins@ugr.es}}
  \urladdr{https://www.ugr.es/local/mmartins/}

	\begin{abstract} 
A long-standing open problem asks whether there exists an infinite-dimensional Banach space on which every bounded linear operator attains its norm. With $\mathrm{NA}_1(X,Y)$ and $\mathrm{NA}_2(X,Y)$ denoting the classes of operators whose adjoints and second adjoints, respectively, attain their norms, we prove that
 \[ \mathrm{NA}_2(c_0,c_0)=\mathcal{L}(c_0,c_0) \qquad\text{and}\qquad \mathrm{NA}_2(\ell_1,\ell_1)=\mathcal{L}(\ell_1,\ell_1), \]
providing, to the best of our knowledge, the first known infinite-dimensional spaces on which every operator has a norm-attaining second adjoint. 
Building on the result for $c_0$, we undertake a systematic study of this equality within a natural family of $\ell_1$-preduals given by hyperplanes of $c$, obtaining a complete characterization in this setting. In particular, we prove that \[ \mathrm{NA}_2(c,c)\neq \mathcal{L}(c,c), \qquad\text{whereas}\qquad \mathrm{NA}_3(c,c)=\mathcal{L}(c,c). \]  
We also establish Holub--Mujica-type theorems for the classes $\mathrm{NA}_1$ and $\mathrm{NA}_2$. More precisely, under suitable separability and approximation property assumptions, the identity $\mathcal L(X,Y)=\mathrm{NA}_1(X,Y)$ forces every operator from $X$ into $Y$ to be compact, whereas $\mathcal L(X,Y)=\mathrm{NA}_2(X,Y)$ forces every weakly compact operator from $X$ into $Y$ to be compact. Finally, strengthening a construction of Ostrovskii, we show that every infinite-dimensional Banach space admits an equivalent norm and a projection whose second adjoint does not attain its norm. 
	\end{abstract}

	\subjclass[2020]{46B20, 46B10, 46B28}
	\keywords{Norm-attaining operators; adjoint operators; $\ell_1$-preduals; hyperplanes of $c$; bounded approximation property}
	
	\maketitle

\section{Introduction}

A classical theorem of James characterizes reflexive Banach spaces as those on which every continuous linear functional attains its norm. For operators between infinite-dimensional Banach spaces, the corresponding phenomenon is much less understood. In particular, the following long-standing problem was explicitly posed by Ostrovskii in \cite[\textsection12, p.~65]{MP}; see also \cite[Problem 8]{KOS} and \cite[Problem 217]{GMZ}.

Throughout the paper, all Banach spaces are assumed to be \emph{real}. Given Banach spaces $X$ and $Y$, we denote by $\mathcal{L}(X,Y)$ the space of all bounded linear operators from $X$ into $Y$. An operator $T \in \mathcal{L}(X,Y)$ is said to \emph{attain its norm} if there exists a norm-one element $x \in X$ such that $\|Tx\|=\|T\|$. We denote by $\NA(X,Y)$ the set of all such operators. 

\begin{introproblem} \label{Q1} Is there an infinite-dimensional Banach space $X$ such that $\mathcal{L}(X,X) = \NA(X, X)$?
\end{introproblem}

This problem is related to the question of whether $\mathcal{L}(X,X)$ can be reflexive for some infinite-dimensional Banach space $X$ \cite{Godefroy-Saphar,Heinrich,Ruckle}. A standard application of James' theorem shows that a positive solution to Problem~\ref{Q1} would force $\mathcal{L}(X,X)$ to be reflexive. Consequently, $X$ would be reflexive and, by a result of Kalton \cite{Kalton}, separable. Results of Holub \cite{H} and Mujica \cite{Mujica} impose a further strong restriction: such a space cannot have the (compact) approximation property. These results belong to a broader circle of connections among norm attainment, compactness of operators, and reflexivity of operator spaces; we recall the relevant formulation in Theorem~\ref{thm:DJM}.

Norm-attaining operators have been studied extensively, but much less is known about norm attainment after passing to adjoints. As Problem \ref{Q1} remains open, it is natural to ask whether its adjoint-level analogues are more tractable. As we shall see, these variants retain substantial geometric content, but their behavior differs sharply from that of the original problem.

\begin{introproblem} \label{Q2} 
Is there an infinite-dimensional Banach space $X$ such that $T^* \in \NA(X^*, X^*)$ for every operator $T \in \mathcal{L}(X,X)$?
\end{introproblem}

\begin{introproblem} \label{Q3} Is there an infinite-dimensional Banach space $X$ such that $T^{**} \in \NA(X^{**}, X^{**})$ for every operator $T \in \mathcal{L}(X,X)$?
\end{introproblem}

To study these problems systematically, we introduce the sets
\begin{equation*}
	\NA_1(X, Y) = \{T \in \mathcal{L}(X, Y)\colon T^* \in \NA(Y^*, X^*)\}  
\end{equation*}
and 
\begin{equation*}
	\NA_2(X, Y) = \{T \in \mathcal{L}(X,Y)\colon T^{**} \in \NA(X^{**}, Y^{**})\},
\end{equation*}
so that Problems \ref{Q2} and \ref{Q3} ask, respectively, whether the equalities $\mathcal{L}(X,X) =\NA_1 (X,X)$ and $\mathcal{L}(X,X)=\NA_2 (X,X)$ may hold for some infinite-dimensional $X$. Analogously, we may define $\NA_k(X,Y)$ for every $k\geq 3$. By a straightforward application of the Hahn-Banach theorem, 
\[
\NA(X,Y) \subseteq \NA_1 (X, Y) \subseteq \NA_2 (X,Y).
\]
Neither inclusion is reversible in general; we discuss this in Section~\ref{sec:preliminaries}. The approximation-theoretic picture is nevertheless quite different. In his foundational paper \cite{Lindenstrauss}, Lindenstrauss showed both that $\NA(X,Y)$ need not be norm dense in $\mathcal{L}(X,Y)$ and that $\NA_2(X,Y)$ is norm dense in $\mathcal{L}(X,Y)$ for arbitrary Banach spaces $X$ and $Y$. The former failure may persist even for compact operators \cite{Martin2014}. Zizler subsequently strengthened the second-adjoint density theorem by proving that $\NA_1(X,Y)$ is norm dense in $\mathcal{L}(X,Y)$ for arbitrary Banach spaces $X$ and $Y$ \cite{Zizler}. Thus, although $\NA_1(X,Y)$ and $\NA_2(X,Y)$ may be proper subsets of $\mathcal{L}(X,Y)$, they are always norm dense. The present paper addresses the substantially stronger question of when every operator, rather than merely an arbitrarily small perturbation of it, has a norm-attaining adjoint or second adjoint. 

We refer the reader to \cite{Acostasurvey,JungMartinRueda2023} and the references therein for further results on the denseness of norm-attaining operators and its geometric consequences.

Our results show that this stronger universal phenomenon is both possible and highly rigid. Universal second-adjoint norm attainment occurs on the classical non-reflexive spaces $c_0$ and $\ell_1$, but within a natural family of $\ell_1$-preduals, it admits a sharp classification. Under approximation hypotheses, universal norm attainment at the first or second adjoint level forces compactness phenomena, while an equivalent renorming can always destroy the second-adjoint property. 

\subsection{Main results and organization}
Section~\ref{sec:preliminaries} is devoted to general results on
the classes $\QNA$, $\NA_1$, and $\NA_2$. In particular, we show that
non-reflexivity of the range space forces the inclusions between these
classes to be strict in suitable settings (Theorem~\ref{thm:NA1_NA2_converse}), yielding a characterization of
reflexivity of the range space (Corollary~\ref{cor:charac-Y-reflexive}). We also characterize the Schur property of the dual of the domain space (Corollary~\ref{cor:Schur}).

\subsubsection{Main positive results for second adjoints}
In Section~\ref{sec:ell_1_preduals} we give a positive answer to Problem~\ref{Q3} for the two classical non-reflexive spaces $c_0$ and $\ell_1$,  providing, to the best of our knowledge, the first infinite-dimensional examples of Banach spaces on which every operator has a norm-attaining second adjoint.

\begin{introthm}[\mbox{\textrm Theorems \ref{theorem:c0} and \ref{thm:NA2_l1_l1}}]\label{thm:main_c0}
    The following equalities hold: 
    \[
    \mathcal{L}(c_0,c_0) = \NA_2 (c_0,c_0) \quad\text{and}\quad \mathcal{L}(\ell_1,\ell_1)=\NA_2(\ell_1,\ell_1). 
    \]
\end{introthm}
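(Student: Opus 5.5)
For $c_0$ I would work with the matrix of the operator; for $\ell_1$ I would represent the second adjoint through a weak$^*$ cluster point of basis vectors. In both cases the only nontrivial situation is when $T$ itself does not attain its norm, and that case is handled by a gliding hump construction.

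\emph{The case of $c_0$.} Let $T\in\mathcal L(c_0,c_0)$ and write $(Tx)_n=\langle t_n,x\rangle$ with rows $t_n\in\ell_1$. Then $\|T\|=\sup_n\|t_n\|_1$. The second adjoint $T^{**}\colon\ell_\infty\to\ell_\infty$ is given by the same formula, $(T^{**}x)_n=\langle t_n,x\rangle$ for $x\in\ell_\infty$.

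If the supremum is attained at some $n$, the vector $x=(\sign t_n(i))_i\in B_{\ell_\infty}$ already gives $\|T^{**}x\|=\|T\|$.

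Otherwise I use that $Te_i\in c_0$ for every $i$. This says that each coordinate sequence $(t_n(i))_n$ tends to $0$, so the rows converge to $0$ coordinatewise. A gliding hump then gives indices $n_1<n_2<\cdots$ and integers $0=m_0<m_1<\cdots$ with the following properties:
\begin{itemize}
\item $\|t_{n_k}\|_1>\|T\|-1/k$;
\item the mass of $t_{n_k}$ on $[1,m_{k-1}]$ is less than $1/k$ (possible by coordinatewise convergence to $0$);
\item the mass of $t_{n_k}$ beyond $m_k$ is less than $1/k$ (possible since $t_{n_k}\in\ell_1$).
\end{itemize}
Define $x\in B_{\ell_\infty}$ by $x(i)=\sign t_{n_k}(i)$ for $i\in(m_{k-1},m_k]$. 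Then $\langle t_{n_k},x\rangle\geq\|t_{n_k}\|_1-4/k\to\|T\|$. Hence $\|T^{**}x\|_\infty=\sup_n|\langle t_n,x\rangle|=\|T\|$, so $T^{**}$ attains its norm even though no single coordinate need do so.

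\emph{The case of $\ell_1$.} Let $T\in\mathcal L(\ell_1,\ell_1)$ and put $y_j=Te_j$. Then $\|T\|=\sup_j\|y_j\|_1$, and if the supremum is attained, $T$ itself is norm attaining. Otherwise pick $j_k$ with $\|y_{j_k}\|\to\|T\|$.

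For the norming element I take a weak$^*$ cluster point $u\in B_{\ell_1^{**}}$ of $(e_{j_k})$ along a free ultrafilter $\mathcal U$. Weak$^*$-continuity of $T^{**}$ gives, for every $\varphi\in\ell_\infty$,
\[
\langle T^{**}u,\varphi\rangle=\lim_{\mathcal U}\langle y_{j_k},\varphi\rangle .
\]
Since weak$^*$ lower semicontinuity only yields $\|T^{**}u\|\leq\|T\|$, it suffices to construct $\varphi\in B_{\ell_\infty}$ with $\lim_k\langle y_{j_k},\varphi\rangle=\|T\|$. This holds along the whole subsequence, so any ultrafilter works.

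To build $\varphi$, I pass to a further subsequence so that $y_{j_k}$ converges coordinatewise, say to $z$, with $\|z\|_1\leq\|T\|$. I then glide a hump with increasing finite blocks $B_0<B_1<B_2<\cdots$ chosen so that:
\begin{itemize}
\item $z$ has mass less than $1/k$ outside $B_0\cup\dots\cup B_{k-1}$;
\item on $B_0\cup\dots\cup B_{k-1}$, $y_{j_k}$ is within $1/k$ of $z$ in $\ell_1$ (coordinatewise convergence on a finite set);
\item $y_{j_k}$ has mass less than $1/k$ beyond $B_k$.
\end{itemize}
Set $\varphi=\sign z$ on $B_0\cup\dots\cup B_{k-1}$ and $\varphi=\sign y_{j_k}$ on $B_k$. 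This is consistent: each coordinate lies in exactly one block, and the blocks $B_k$ are disjoint. One then estimates
\[
\langle y_{j_k},\varphi\rangle\geq\|z\|_{B_0\cup\dots\cup B_{k-1}}+\|y_{j_k}\|_{B_k}-O(1/k)\geq\|y_{j_k}\|_1-O(1/k)\to\|T\|,
\]
where the middle inequality uses the three block conditions.

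The main obstacle is this bookkeeping. The sign pattern must be fixed once and for all on $B_0\cup\dots\cup B_{k-1}$ (following $z$) while remaining nearly optimal for every later $y_{j_k}$. This is exactly why the coordinatewise limit $z$ is used on earlier blocks instead of the individual signs of $y_{j_k}$. With that arranged, $\langle T^{**}u,\varphi\rangle=\|T\|$, so $\|T^{**}u\|=\|T\|$ with $\|u\|\leq1$, and $T^{**}$ attains its norm.
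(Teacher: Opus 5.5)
Your $c_0$ argument is correct and is essentially the paper's proof of Theorem~\ref{theorem:c0}(1). The rows $T^*e_n^*$ are weak$^*$-null, and your gliding hump is Lemma~\ref{lemma:disjoint_support_lemma} together with Corollary~\ref{cor:finite-head-norming} for $F=\varnothing$.

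The $\ell_1$ part has a genuine gap, and it lies in the reduction itself. You claim it suffices to find one $\varphi\in B_{\ell_\infty}$ with $\lim_k\langle y_{j_k},\varphi\rangle=\|T\|$. Such a $\varphi$ does not exist in general. Since $\langle y_j,\varphi\rangle=(T^*\varphi)(e_j)$, it would give $\|T^*\varphi\|_\infty=\|T\|$, that is, $T\in\NA_1(\ell_1,\ell_1)$.

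Proposition~\ref{prop:l1_l1} shows this fails for $Tx=z_0(x)x_0-x$. There $y_j=x_0-e_j$, the norms $\|y_j\|=2-2^{1-j}$ never reach $\|T\|=2$, and $z=x_0$ has full support with positive signs. The breakdown is that your definition of $\varphi$ is not consistent:
\begin{itemize}
\item to estimate $\langle y_{j_k},\varphi\rangle$ you need $\varphi=\sign y_{j_k}$ on $B_k$, so $\varphi(j_k)=-1$;
\item to estimate $\langle y_{j_l},\varphi\rangle$ for every $l>k$ you need $\varphi=\sign z=1$ on $B_k$.
\end{itemize}
Saying that each coordinate lies in exactly one block does not resolve this, because the prescription on a fixed block changes with the index being tested. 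Following the signs of $y_{j_k}$ on every $B_k$ gives $\langle y_{j_k},\varphi\rangle\le\langle x_0,\varphi\rangle+1$, a constant strictly below $2$. Following $\sign z$ gives $\varphi\equiv1$ and $\langle y_{j_k},\varphi\rangle=0$.

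What is missing is that $\|T^{**}u\|$ is a supremum over $B_{\ell_\infty}$ that need not be attained. Moreover, $T^{**}u$ has two parts that must be normed separately. Approximate the weak$^*$-null sequence $y_{j_k}-z$ by disjointly supported humps $b_k$ (Lemma~\ref{lemma:disjoint_support_lemma}). This gives $T^{**}u=z+\psi$ with $z\in\ell_1$ and $\psi=\lim_{\mathcal U}b_k\in c_0^\perp$. The paper then uses the $L$-decomposition $\ell_\infty^*=\ell_1\oplus_1c_0^\perp$ (Fact~\ref{fact:Hewitt-Yosida}) to get $\|T^{**}u\|=\|z\|+\|\psi\|=\|z\|+\lim_k\|b_k\|=\|T\|$.

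Your bookkeeping can also be repaired elementarily in the same spirit, by letting the test vector vary; here $u$ is taken along the final subsequence. For each $m$, set $\varphi_m=\sign z$ on $[1,m]$, $\varphi_m=\sign y_{j_k}$ on each block $B_k\subseteq(m,\infty)$, and $\varphi_m=0$ elsewhere. Your second and third block conditions then give $\lim_{\mathcal U}\langle y_{j_k},\varphi_m\rangle\geq\|T\|-2\|z\restricted_{(m,\infty)}\|$. Letting $m\to\infty$ yields $\|T^{**}u\|=\|T\|$.
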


Moreover, the equality also holds for $c_0$-sums of finite-dimensional spaces (Remark~\ref{rem:c0_sum_finite_dim}), for spaces $c_0(\Gamma)$ (Remark~\ref{rem-c0gamma}) and  $\ell_1(\Gamma)$ (Remark~\ref{rem-ell1gamma}). These conclusions are specific to norm-attainment at the level of the second adjoint. Indeed, Example \ref{ex:strict_inclusion} and Proposition \ref{prop:l1_l1} show that 
\[
    \mathcal{L}(c_0,c_0) \neq \NA_1 (c_0,c_0) \quad\text{and}\quad \mathcal{L}(\ell_1,\ell_1) \neq \NA_1(\ell_1,\ell_1). 
\]

Both proofs are driven by disjoint-support phenomena in $\ell_1$, but in different ways. For operators on $c_0$, we apply a disjoint-support perturbation argument in $\ell_1$ (Lemma \ref{lemma:disjoint_support_lemma} and Corollary \ref{cor:finite-head-norming}). For operators on $\ell_1$, we begin with a maximizing sequence, separate its weak$^*$ limit from an asymptotically disjoint remainder, and exploit the decomposition $\ell_\infty^*= \ell_1\oplus_1 c_0^\perp$ which follows from the fact that $c_0$ is an $M$-ideal in $\ell_\infty$ (Fact~\ref{fact:Hewitt-Yosida}). 

Next, building on the $c_0$ part of Theorem~\ref{thm:main_c0}, we carry out a systematic study of the equality $\mathcal{L}(W_\alpha,W_\beta)=\NA_2 (W_\alpha, W_\beta)$ for the $\ell_1$-predual spaces $W_\alpha$ and $W_\beta$ (see Definition~\ref{def:Walpha} for the definition of these spaces).

\begin{introthm}[\mbox{\textrm Theorems \ref{theorem:c0}, \ref{full-characterization_target_c0}, \ref{thm:target_c}, and \ref{thm:target_W_beta}}]\label{thm:main_W_alpha_W_beta}
    Let $\alpha,\beta\in S_{\ell_1}$ such that both $W_\alpha^*$ and $W_\beta^*$ are isometric to $\ell_1$. 
  \begin{enumerate}[label=(\roman*)]
  \itemsep0.25em
  \item If $W_\alpha$ is isometric to $c_0$ or $c$, then $\mathcal{L}(W_\alpha, c_0)=\NA_2 (W_\alpha, c_0) $. 
  \item If $W_\alpha$ is isometric to neither $c_0$ nor $c$, then $\mathcal{L}(W_\alpha, c_0)=\NA_2 (W_\alpha, c_0)$ if and only if $\alpha$ is finitely supported. 
  \item $\mathcal{L} (W_\alpha, c) \neq \NA_2 (W_\alpha, c)$.
  \item If $W_\beta$ is isometric to neither $c_0$ nor $c$, then $\mathcal{L}(W_\alpha,W_\beta) \neq \NA_2 (W_\alpha, W_\beta)$. 
    \end{enumerate}
\end{introthm}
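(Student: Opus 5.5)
Throughout, $W_\alpha$ is read as the hyperplane of $c$ from Definition~\ref{def:Walpha}, that is, $W_\alpha=\{x\in c\colon \lim x=\sum_n \alpha_n x_n\}$ (up to the paper's exact normalisation). Under the standing hypothesis we identify $W_\alpha^*=\ell_1$ and $W_\alpha^{**}=\ell_\infty$. For a target $Y$ that is itself an $\ell_1$-predual, $\|T^{**}\|=\sup\{\|T^*y^*\|\colon y^*\in\operatorname{ext}B_{Y^*}\}$. Hence $T\in\NA_2(W_\alpha,Y)$ holds exactly when some sign vector $s\in B_{\ell_\infty}$ achieves $\sup_{y^*}|\langle T^*y^*,s\rangle|=\|T\|$. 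All four items are analyses of this combinatorial problem on the family $f_{y^*}=T^*y^*\in\ell_1$.

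\textbf{Item (i).} For $Y=c_0$ the relevant family is $f_n=T^*e_n^*$, and it is weak$^*$ null in $W_\alpha^*$. If $\sup_n\|f_n\|_1$ is attained, take $s=\operatorname{sign} f_n$ and we are done. Otherwise take a maximizing subsequence. The case $W_\alpha\cong c_0$ is exactly Theorem~\ref{theorem:c0}, which rests on Lemma~\ref{lemma:disjoint_support_lemma} and Corollary~\ref{cor:finite-head-norming}. For $W_\alpha\cong c$ I would transport the problem through the isometry onto $c$. There, weak$^*$ null in $c^*=\ell_1(\{\infty\}\cup\N)$ means coordinatewise null together with a controlled limit coordinate, so the same finite-head/disjoint-tail argument applies with one extra coordinate.

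\textbf{Item (ii).} For $\alpha$ finitely supported, supported on $F$, weak$^*$ nullity in the $\ell_1$ coordinates of $W_\alpha^*$ means coordinatewise convergence off $F$ plus a correction living on $F$. I would pass to a subsequence whose supports are asymptotically disjoint outside the finite head $F$. I would then choose $s$ on the tails by matching signs, and fix $s$ on $F$ via Corollary~\ref{cor:finite-head-norming} so that the norm is attained exactly at a single index rather than only in the limit.

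For the converse in (ii), with $\alpha$ infinitely supported, I would build an explicit $T\colon W_\alpha\to c_0$. Its functionals $f_n$ carry mass $\to\|T\|$ that escapes to infinity along $\supp\alpha$. Weak$^*$ nullity then forces $f_n$ to mimic $\alpha$ on longer and longer tails, with strictly increasing norms. A sign vector $s$ achieving the supremum would have to agree with $\operatorname{sign}\alpha$ on infinitely many coordinates. That is incompatible with the finite head needed to push $|\langle f_n,s\rangle|$ up to $\|T\|$, so $\sup_n|\langle f_n,s\rangle|<\|T\|$ for every $s$.

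\textbf{Items (iii) and (iv).} When $Y=c$ or $Y=W_\beta$, $\operatorname{ext}B_{Y^*}$ contains the limit-type functional. This couples the sequence $T^*e_n^*$ with its weak$^*$ limit $T^*(\lim)$, whose pairing with $s$ is rigidly determined. I would construct $T$ so that $\|T^*e_n^*\|_1\uparrow\|T\|$ strictly, while the limit functional has strictly smaller norm. Any $s$ nearly norming $T^*e_n^*$ for large $n$ then fails to norm the earlier ones, and no $y^*$ attains the supremum.

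For (iv), when $W_\beta\not\cong c_0,c$ the weight $\beta$ has at least two nonzero entries (or infinite support). This lets the same construction go through by routing the escaping mass through the $\beta$-combination. I expect the hardest step to be the converse in (ii): building a counterexample that certifies non-attainment for \emph{every} $s\in B_{\ell_\infty}$ simultaneously. I would first try weights $f_n=(1-2^{-n})\,\alpha\restricted_{[n,\infty)}/\|\alpha\restricted_{[n,\infty)}\|+$(head term) and verify the strict inequality coordinate-wise.
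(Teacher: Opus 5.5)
Your treatment of (i) and of the finitely supported case of (ii) is the paper's argument. There, $f_n=T^*e_n^*$ is coordinatewise null off a finite set $F$: empty for $c_0$, the limit coordinate for $c$, and the finitely many $j$ with $e_j\notin W_\alpha$ otherwise. Corollary~\ref{cor:finite-head-norming} then yields $\zeta$ with $\zeta(f_{n_j})\to\|T\|$. One correction: in the non-attained case $|\langle f_n,s\rangle|\le\|f_n\|<\|T\|$ for every $n$, so the norm can never be reached ``at a single index''. It is reached only in the limit, which suffices because $\|T^{**}s\|=\sup_n|\langle f_n,s\rangle|$.

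The genuine gap is that none of the three negative statements is proved. You describe mechanisms but write down no operator, and the mechanisms you describe are not the ones that work.

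For the converse in (ii), it is false that weak$^*$ nullity ``forces $f_n$ to mimic $\alpha$ on tails''. In your normalisation $e_n\to\alpha$ weak$^*$, so $e_{\tau(n)}-\alpha$ is weak$^*$ null with a point mass escaping. What weak$^*$ nullity actually forces is that the coordinatewise limit be $\lambda\alpha$, where $-\lambda$ is the limiting net signed escaping mass. Hence your candidate with a fixed finitely supported head term has two possibilities. Either it is not weak$^*$ null, or the head is $0$ and the relative tail sums tend to $0$. In the latter case $s=\sign\alpha$ gives $\langle f_n,s\rangle=1-2^{-n}\to\|T\|$, so $T^{**}$ \emph{does} attain its norm.

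The paper instead uses the weak$^*$ limit $\eta_\alpha$ of $(e_n)$ itself as a rigid, infinitely supported head. With $\tau$ chosen so that $\theta\eta_\alpha(\tau(n))>0$, the sequence $a_n=\theta\eta_\alpha+p_\theta e_{\tau(n)}-q_\theta e_{\tau(n+1)}$ is weak$^*$ null. Attainment would force $|\zeta(\eta_\alpha)|=\|\eta_\alpha\|$, which pins $\zeta$ at $\tau(n)$ and $\tau(n+1)$ and gives $|\zeta(a_n)|\le\|\eta_\alpha\|+1<\|\eta_\alpha\|+3=\|T\|$ (Lemma~\ref{lem:weak-star-limit-infinite-support}). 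The bound $|\eta_\alpha(j)|<1$ is what makes the norm computation work. Your heuristic might be repaired in this direction: a head converging to a nonzero multiple of $\alpha$, with the escaping mass placed where that head pins $s$. As written, though, it is only a plan.

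For (iii) and (iv), the properties you aim for, ``$\|T^*e_n^*\|\uparrow\|T\|$ strictly while the limit functional has smaller norm'', are enjoyed by $x\mapsto((1-2^{-n})x_n)$ from $c_0$ into $c$. Its second adjoint attains its norm at $\mathbf 1$. Likewise, ``an $s$ nearly norming $f_n$ fails to norm the earlier ones'' is beside the point, since $\|T^{**}s\|$ is a supremum. In the paper's example, for every $\varepsilon>0$ one $\zeta$ $\varepsilon$-norms infinitely many $f_n$.

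The missing idea is exact rigidity. Since the target is not $c_0$, the $f_n$ need only converge weak$^*$, so one can take $f_n=\kappa u+(1-\kappa)b_n$. Here $u=(2^{-n})$ is a persistent part and $b_n=\frac12(e_{n+1}-e_{n+2})$ are zero-sum bumps. Attainment forces $|\zeta(u)|=1$, i.e.\ $\zeta=\pm\mathbf 1$, which kills every $b_n$, so the supremum drops below $1$ (Lemma~\ref{lem:moving-bump}).

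In (iv) your premise is also wrong. When $W_\beta\not\equiv c_0,c$, Remark~\ref{remark:onto} shows that $\operatorname{ext}B_{W_\beta^*}$ consists only of $\pm$ point evaluations. The limit functional on $W_\beta$ equals $\phi(\eta_\beta)$ with $\eta_\beta=(-\beta_{k+1}/\beta_1)_k$, which is not extreme. The actual difficulty is landing in $W_\beta$, i.e.\ satisfying $\beta_1\lim_k(Tx)(k)+\sum_k\beta_{k+1}(Tx)(k)=0$ while keeping the moving-bump structure. The paper does this in three steps:
\begin{enumerate}
\item It adds a compensating coordinate $m$ carrying a multiple of $u$.
\item It places the bumps where $\beta_{k+1}=0$ when $\beta$ has finite support. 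For infinite support it places them on paired indices $p_n,q_n$ with weights $-r_n$ and $1$, where $r_n=\beta_{q_n+1}/\beta_{p_n+1}$, so that their contributions cancel.
\item It keeps $\sup_{k\notin A}\|Se_k\|<1$ through the choice of $\theta$ or a small $\kappa$.
\end{enumerate}
``Routing the escaping mass through the $\beta$-combination'' does not address any of this.
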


In particular, Theorem \ref{thm:main_W_alpha_W_beta} provides a complete characterization of the equality $\mathcal{L}(W_\alpha,W_\beta)=\NA_2 (W_\alpha,W_\beta)$ for all pairs of $\ell_1$-preduals within the class $\{W_\alpha\}$. As a byproduct, we show that the Schur property of $X^*$ does not suffice to guarantee $\mathcal{L}(X,c_0) = \NA_2 (X, c_0)$ (Example~\ref{exa:the_converse_of_Schur_result}). 

As a further consequence of Theorems ~\ref{thm:main_c0} and \ref{thm:main_W_alpha_W_beta}, we obtain a strict separation between two consecutive adjoint levels: taking $X=c$, we have  

\begin{examplenonumber}[\mbox{\textrm Corollary \ref{cor:ccN3-noN2}}]
There exists a Banach space $X$ such that $\NA_3(X,X)=\mathcal L (X,X)$ while $\NA_2(X,X) \neq \mathcal L (X,X)$.
\end{examplenonumber}

\subsubsection{Holub--Mujica-type theorems for \texorpdfstring{$\NA_1$}{NA1} and \texorpdfstring{$\NA_2$}{NA2}} 
Section~\ref{section:HolubMujica} deals with the natural question of whether the classical Holub--Mujica theorem
admits analogues when norm attainment is replaced by the weaker
properties defined using $\NA_1$ and $\NA_2$.

The natural ambient operator space changes with the adjoint level. In the $\NA_1$ case, we work with the subspace $\mathcal{L}_{w^*,w^*}(Y^*, X^*)$ of $\mathcal{L}(Y^*,X^*)$ of all weak$^*$-to-weak$^*$ continuous operators in place of $\mathcal{L}(X,Y)$, whereas in the $\NA_2$ case we work with the corresponding subspace of  $\mathcal{L}(X^{**},Y^{**})$ of second adjoints. We introduce the appropriate weak$^*$-analogues of the notions used in \cite{DJM}, while versions of the Principle of Local Reflexivity (Lemma \ref{lem:PLR_Johnson} and Theorem \ref{thm:PLR}) play a crucial role by providing the key approximation step. 

Our main result in this direction is the following.

\begin{introthm}[\mbox{\textrm Theorems \ref{theorem-holub-mujica-NA1} and \ref{Holub-for-NA2}}]\label{thm:main_holub_mujica}
    Let $X$ and $Y$ be Banach spaces.
      \begin{enumerate}[label=(\roman*)]
  \itemsep0.25em
  \item Suppose that $Y^*$ is separable and that the pair $(Y^*,X^*)$
has the bounded approximation property. If $\mathcal{L}(X,Y)=\NA_1 (X,Y)$, then every bounded linear operator from $X$ into $Y$ is compact. 
  \item Suppose that $X^{**}$ is separable and that the pair $(X^{**},Y)$
has the bounded approximation property. If $\mathcal{L}(X,Y)=\NA_2 (X,Y)$, then every weakly compact operator from $X$ into $Y$ is compact. 
  \end{enumerate}
\end{introthm}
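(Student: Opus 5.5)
We follow the classical Holub--Mujica strategy (as recalled in Theorem~\ref{thm:DJM}), transported to the adjoint level. The argument runs through James' theorem applied to a suitable operator space in which the relevant operators sit, and the contrapositive is what we actually prove.

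\emph{Part (i).} Set $\mathcal{M}=\mathcal{L}_{w^*,w^*}(Y^*,X^*)$, which is isometric to $\mathcal{L}(X,Y)$ via $T\mapsto T^*$. Since $Y^*$ is separable and $(Y^*,X^*)$ has the BAP, the weak$^*$-analogue of the approximation arguments of \cite{DJM}, made available by the Principle of Local Reflexivity (Lemma~\ref{lem:PLR_Johnson}), should give two facts. First, compact weak$^*$-continuous operators, equivalently adjoints of compact operators, are the closure of the finite-rank weak$^*$-continuous operators $\mathcal{F}_{w^*,w^*}(Y^*,X^*)$. Second, the dual of this closure is a quotient of $Y^{*}\widehat{\otimes}_\pi X$ (or a related projective tensor product), and norm-attaining functionals of the form $\bigl\langle \cdot\,y^*,x^{**}\bigr\rangle$ with $y^*\in B_{Y^*}$, $x^{**}\in B_{X^{**}}$ are norming.

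Assume $\mathcal{L}(X,Y)=\NA_1(X,Y)$ but some $T\in\mathcal{L}(X,Y)$ is not compact. The plan, mimicking Holub and Mujica, is to build a functional $\varphi$ on $\mathcal{K}(X,Y)$ whose norm is not attained. This would show that $\mathcal{K}(X,Y)$ is not reflexive, and the separability plus BAP should then let us produce an operator $S\in\mathcal{L}(X,Y)$ with $S^*$ not attaining its norm. Concretely, fix a bounded sequence $(y_n^*)$ in $S_{Y^*}$ with $(T^*y_n^*)$ having no convergent subsequence. Using the BAP approximants $A_k$ of the identity on $Y^*$ (which can be taken weak$^*$-continuous by PLR), decompose $Y^*$ into ``blocks'' $A_{k+1}-A_k$. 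We then define
\[
S^*=\sum_k \lambda_k (A_{k+1}-A_k)^{\!*}\text{-type pieces composed with }T^*,
\]
with weights $\lambda_k\uparrow 1$, chosen so that $\|S^*\|=\sup_k\lambda_k\|\cdot\|$ is approached but never attained. The key estimate is that any $y^*\in B_{Y^*}$ has $\|S^*y^*\|\le\sum_k\lambda_k\|\cdots\|<\|S^*\|$ because the mass must spread over finitely many blocks essentially. Non-compactness guarantees that the pieces have norms bounded below, so the supremum is not achieved at a finite stage. This contradicts $S\in\NA_1$.

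\emph{Part (ii).} We argue the same way one level higher. Work with second adjoints in $\mathcal{L}(X^{**},Y^{**})$, and use Theorem~\ref{thm:PLR} to transfer BAP approximants of the pair $(X^{**},Y)$ to weak$^*$-continuous ones. If $T$ is weakly compact then $T^{**}(X^{**})\subseteq Y$, so $T^{**}\in\mathcal{L}(X^{**},Y)$ and the BAP of $(X^{**},Y)$ applies directly. The construction of the non-norm-attaining $S^{**}$ uses a separable-$X^{**}$ sequence $(x_n^{**})$ witnessing non-compactness of $T^{**}$ together with weighted block sums as above. We must ensure that the resulting $S$ is a genuine operator $X\to Y$; this holds because each block is of the form $T\circ(\text{finite-rank }w^*\text{-continuous})$, and weak compactness keeps the series inside $Y$.

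\emph{Main obstacle.} The hardest step is the \emph{block construction} guaranteeing strict inequality, i.e. that the sup is unattained. At the adjoint level we cannot choose approximants freely: they must be weak$^*$-continuous (adjoints), and PLR only gives approximate versions. The first thing to try is to pass to a subsequence so that the approximation errors are summable (e.g.\ $\le 2^{-k}$). We would then absorb the errors into the weights $\lambda_k$, in the spirit of the perturbation lemmas used in \cite{DJM}.
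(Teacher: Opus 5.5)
\textbf{There is a genuine gap.} Your proposal never turns the hypothesis $\mathcal{L}(X,Y)=\NA_1(X,Y)$ into a usable structural statement, and the substitute you offer does not work.

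\emph{James' theorem is not available.} The functionals at which an operator attains its norm in the $\NA_1$ sense are $T\mapsto x^{**}(T^*y^*)$ with $y^*\in S_{Y^*}$ and $x^{**}\in S_{X^{**}}$. They form only a James boundary inside $\mathcal{L}(X,Y)^*$, not the unit ball of a predual, so James' theorem yields no reflexivity. Conversely, a non-norm-attaining functional on $\mathcal{K}(X,Y)$ would give you no $S$ with $S^*\notin\NA(Y^*,X^*)$. Your sentence ``the separability plus BAP should then let us produce $S$'' is exactly the missing proof.

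\emph{The block construction cannot be carried out as described.} The hypothesis is the BAP of the \emph{pair} $(Y^*,X^*)$. This approximates operators $Y^*\to X^*$ by finite-rank ones in $\tau_c$. It does not give approximants $A_k$ of $\id_{Y^*}$: the pair may have the BAP while neither space does. Even when such $A_k$ exist, the differences $A_{k+1}-A_k$ are not disjoint in any norm sense. So there is no reason for an estimate $\|S^*y^*\|\le\sum_k\lambda_k\|\cdots\|$, let alone the strict inequality $\|S^*y^*\|<\|S^*\|$ for every $y^*$. That strict inequality is the whole difficulty, and your sketch gives no mechanism for it. The paper, like the proof of Theorem~\ref{thm:DJM}, avoids it by using Pfitzner's theorem rather than an explicit construction. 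Note also that the separability of $Y^*$ plays no real role in your argument, whereas it is essential in the actual proof.

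\textbf{The missing idea is to exploit universal norm attainment through Pfitzner's theorem on James boundaries.}
\begin{itemize}
\item If $\NA_1(X,Y)=\mathcal{L}(X,Y)$, then $\{y^*\otimes x^{**}\colon y^*\in S_{Y^*},\,x^{**}\in S_{X^{**}}\}$ is a James boundary of $\weakstar$, and the associated weak topology is WOT.
\item By Pfitzner's theorem, relatively WOT-compact subsets are relatively weakly compact, so property $\propertystar$ fails.
\item Consequently every norm-closed convex subset of $\weakstar$ is WOT-sequentially closed, hence SOT-sequentially closed, in $\weakstar$. This applies in particular to the unit ball of the compact weak$^*$-to-weak$^*$ continuous operators.
\item Separability of $Y^*$ makes SOT metrizable on bounded sets, which upgrades sequential closedness to SOT-closedness.
\item The pair BAP combined with Johnson's local reflexivity lemma gives $B_{\mathcal{L}(Y^*,X^*)}\subseteq\lambda\overline{B_{\weakstarfiniterank}}^{\tau_c}$.
\item Intersecting with $\weakstar$ shows that every operator is compact.
\end{itemize}

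Part (ii) is the same argument in the space of second adjoints $\mathcal{L}^{(2)}_{w^*,w^*}(X,Y)$. There one uses the boundary $\{x^{**}\otimes y^{***}\}$, the separability of $X^{**}$, and Oja's principle of local reflexivity. Your observation that weak compactness places $T^{**}$ in $\mathcal{L}(X^{**},Y)$, where the pair BAP and Oja's theorem apply, is correct. It is precisely why only $\mathcal{W}(X,Y)=\mathcal{K}(X,Y)$ is obtained in that part.
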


Thus, under these hypotheses, universal first-adjoint norm attainment forces compactness of all operators, while universal second-adjoint norm attainment controls precisely the weakly compact part of $\mathcal L(X,Y)$. Observe that when $X$ is reflexive, the above results recover the extension of the Holub--Mujica theorem provided in \cite{DJM} (Theorem~\ref{thm:DJM} below).

As a consequence, we obtain that there is no infinite-dimensional Banach space $X$ with $X^{**}$ separable such that $\NA_2(X,c_0)=\mathcal{L}(X,c_0)$ (Proposition~\ref{prop:NA2_when_target_is_c0}).

\subsubsection{Renormings with non-norm-attaining second adjoints}

Building on a construction due to Ostrovskii \cite{Ostrovskii}, which yields, on every infinite-dimensional Banach space, an equivalent norm under which a certain operator fails to attain its norm, we establish in Section~\ref{sec:Ostrovskii} the following stronger result.

\begin{introthm}[\mbox{\textrm Theorem \ref{Ostrovskii-for-NA1-and-NA2}}]\label{thm:main_renorming}
 Let $X$ be an infinite-dimensional Banach space. Then there exist an equivalent norm $\vertiii{\cdot}$ on $X$ and a projection $P$ on $(X, \vertiii{\cdot})$ such that $P \not\in \NA_2((X, \vertiii{\cdot}), (X, \vertiii{\cdot}))$.
\end{introthm}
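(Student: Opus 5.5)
The plan is to take a projection of \emph{finite corank}, and in fact with one-dimensional kernel. Finite-rank operators are useless here. If $P$ has finite rank, then $P^{**}$ is weak$^*$-to-norm continuous from $B_{X^{**}}$ into a finite-dimensional space, and $B_{X^{**}}$ is weak$^*$-compact, so $P^{**}$ always attains its norm. Infinite-rank and infinite-corank projections need not exist, for instance in hereditarily indecomposable spaces. So fix $x^*\in X^*$ and $x_0\in X$ with $x^*(x_0)=1$, put $Z=\ker x^*$ and $P=\id-x^*\otimes x_0$. Then $X=\spann\{x_0\}\oplus Z$, $X^{**}=\spann\{x_0\}\oplus Z^{**}$ and $P^{**}(ax_0+z^{**})=z^{**}$. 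Everything reduces to building an equivalent norm $\vertiii{\cdot}$ for which the ``cancellation ratio''
\[
\sup\bigl\{\vertiii{z^{**}}\colon \vertiii{ax_0+z^{**}}\leq 1\bigr\}
\]
is not attained anywhere on the bidual unit sphere.

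Following Ostrovskii's construction, I would first fix inside $Z$ a bounded sequence $(z_n)$ with functionals $(z_n^*)\subset Z^*$ such that $z_n^*(z_m)=\delta_{nm}$ and $\sup_n\|z_n^*\|<\infty$. Such a system exists from a basic sequence in $Z$ via Hahn--Banach. I would also fix weights $c_n\uparrow 2$ with $c_n<2$ for all $n$. I would then define
\[
\vertiii{ax_0+z}=\max\Bigl\{\|z\|,\ |a|,\ \sup_n\bigl|c_n z_n^*(z)-a\bigr|\Bigr\},
\]
possibly rescaling $\|\cdot\|$ on $Z$ so that $\|z_n\|$ is small compared with $1$. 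This is clearly an equivalent norm. The norm of $z\in Z$ becomes $\max\{\|z\|,\sup_n c_n|z_n^*(z)|\}$. Testing $P$ on $z_k+\tfrac{c_k}{2}x_0$ shows $\|P\|\ge c_k$ for every $k$, hence $\|P\|\ge 2$. A direct estimate using $|c_nz_n^*(z)-a|\ge c_n|z_n^*(z)|-|a|$ gives $\|P\|\le 2$. Since $\|P^{**}\|=\|P\|=2$, this gives the norm.

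The same formula extends to $X^{**}$: the third term becomes $\sup_n|c_n z^{**}(z_n^*)-a|$, and the extended norm is the bidual norm because every term is weak$^*$-lower semicontinuous. Attainment would require some $ax_0+z^{**}$ of norm $1$ with $\vertiii{z^{**}}=2$. Since $\|z^{**}\|\leq 1$, this forces $\sup_n c_n|z^{**}(z_n^*)|=2$. Because each $c_n<2$, this is possible only if $|z^{**}(z_n^*)|>1$ along a subsequence. The main obstacle is ruling this out using the constraints $|a|\le 1$ and $|c_nz^{**}(z_n^*)-a|\le 1$ for all $n$. These constraints give $|c_nz^{**}(z_n^*)|\le 2$, and equality in the limit only along $n\to\infty$.

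At the level of $X$ this is automatic, since $z_n^*(z)\to 0$ whenever $(z_n^*)$ is weak$^*$-null on $Z$. In the bidual we lose this decay. So the step I expect to be hardest is to modify the construction so that the bidual also sees a strict loss. The first thing I would try is a summing term, replacing $\|z\|$ by $\|z\|+\sum_n 2^{-n}|z_n^*(z)|$ or a similar strictly convex perturbation on the span of the $z_n^*$. The aim is that any $z^{**}$ with $|z^{**}(z_n^*)|\geq 1$ for some $n$ pays a definite penalty that forces the ratio strictly below $2$. Alternatively one can take $(z_n^*)$ weakly null in $Z^*$, if that is available by choosing $(z_n)$ so that $\spann\{z_n^*\}$ behaves like $c_0$ or $\ell_p$; then $z^{**}(z_n^*)\to 0$ for all $z^{**}$. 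In either case the remaining verification is routine bookkeeping on the three terms of the norm.
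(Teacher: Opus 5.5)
Your reduction to $P=\id-x^*\otimes x_0$ with one-dimensional kernel is the same as in the paper. However, the proof has a genuine gap exactly at the step you flag: nothing in the proposal shows that $P^{**}$ fails to attain its norm, and the norm you write down does not have this property.

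With your rescaling $\|z_k\|\le c_k/2$, put $w=x_0+\frac{2}{c_k}z_k$. Then $\vertiii{w}=1$, since all three terms are at most $1$ and $|\frac{2c_n}{c_k}\delta_{nk}-1|=1$ for every $n$. But $\vertiii{Pw}=2$, so $P$ already attains its norm on $X$. Indeed, your own test vector $z_k+\frac{c_k}{2}x_0$ gives the ratio exactly $2$, not $c_k$.

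Without rescaling, the bidual breaks the construction. Take $X=c_0$, $x_0=e_1$, $x^*=e_1^*$, $z_n=e_{n+1}$, $z_n^*=e_{n+1}^*$ and $0<c_n\uparrow 2$. The map $ax_0+z\mapsto (z,a,(c_nz_n^*(z)-a)_n)$ is an isometric embedding into $c_0\oplus_\infty\R\oplus_\infty c$, so your formula is the true bidual norm here. The element $w=(1,1,1,\dots)\in\ell_\infty$ has $\vertiii{w}=\max\{1,1,\sup_n|c_n-1|\}=1$, but $\vertiii{P^{**}w}=\sup_n c_n=2$. This also refutes your claim that attainment forces $|z^{**}(z_n^*)|>1$ along a subsequence: $|z^{**}(z_n^*)|=1$ for all $n$ suffices, because $c_n\to 2$. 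Weights $c_n<2$ cannot prevent attainment, since in the bidual mass can sit ``at infinity'' along $(z_n)$.

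The suggested repairs are not carried out, and as stated they do not work.
\begin{itemize}
\item Weakly null $(z_n^*)$ is impossible whenever $Z^*$ has the Schur property (e.g.\ $X=c_0$), because $z_n^*(z_n)=1$ with $(z_n)$ bounded.
\item The penalty $\sum_n2^{-n}|z_n^*(z)|$ does not stop $\sup_nc_n|z^{**}(z_n^*)|=2$. An element with $z^{**}(z_n^*)=0$ for $n<N$ and $|z^{**}(z_n^*)|=1$ for $n\ge N$ pays only $2^{1-N}$. This is harmless unless you also arrange $|z_n^*(z)|\le\|z\|$.
\item Weak$^*$-lower semicontinuity of the terms only shows that your formula is \emph{at most} the bidual norm. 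That is the wrong inequality for bounding $\vertiii{P^{**}w}$ from above.
\end{itemize}

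The missing idea is the paper's construction.
\begin{itemize}
\item First renorm by $\|x\|_1=\max\{\frac12\|x\|,\sup_j|e_j^*(x)|\}$, so that the coordinates are $1$-Lipschitz and $\|e_i\|_1=1$.
\item Then define the new unit ball as the closed convex hull of the $\|\cdot\|_1$-ball of $\ker e_0^*$ together with the spikes $\pm(e_0+\alpha_ie_i)$, where $\alpha_i\uparrow2$.
\item By Goldstine, any $x_0^{**}$ with $\vertiii{P^{**}x_0^{**}}=2$ is a weak$^*$-limit of convex combinations whose $\ker e_0^*$-part and whose weight on each single spike tend to $0$.
\item Hence $P^{**}x_0^{**}$ lies in $E^{**}$, where $E=\overline{\spann}\{e_i\}_{i\ge1}$, and every coordinate $e_j^*$ vanishes on it.
\item The isometry $J_E\colon(E,\|\cdot\|_1)\to(E,\|\cdot\|)\oplus_\infty c_0$ then gives $\vertiii{P^{**}x_0^{**}}\le\|P^{**}x_0^{**}\|_1=\frac12\|P^{**}x_0^{**}\|\le1$, a contradiction.
\end{itemize}
Working on $E$, where the coordinates form a $c_0$-sequence, is also what makes the bidual norm computable exactly. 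Your functional-side approach lacks this.
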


In particular, the identity $\mathcal{L}(X,X)=\NA_2 (X,X)$ is not stable under equivalent renormings.

\subsection{Further notation}\label{subsec:notation}
Given a Banach space $X$, we denote by $B_X$ and $S_X$ its closed unit ball and unit sphere, respectively. For Banach spaces $X$ and $Y$, we write $X \equiv Y$ when $X$ and $Y$ are linearly isometric. The symbols $\mathcal{K}(X,Y), \mathcal{W}(X,Y)$, and $\mathcal{F}(X,Y)$ stand for the subspaces of $\mathcal{L}(X,Y)$ consisting of all compact, weakly compact, and finite-rank operators, respectively. We will use three topologies on the spaces of operators: $\tau_c$ denotes the compact-open topology and SOT and WOT stand for the strong and weak operator topologies, respectively.

We recall the approximation properties we will use in the paper. A Banach space $X$ has the \emph{approximation property} (AP, for short) if $\id_X\in\overline{\mathcal F(X,X)}^{\tau_c}$, and it has the \emph{$\lambda$-bounded approximation property} ($\lambda$-BAP), for $\lambda\geq1$, if $\id_X\in\lambda\overline{B_{\mathcal F(X,X)}}^{\tau_c}$. Replacing $\mathcal F(X,X)$ by $\mathcal K(X,X)$ in these definitions yields the \emph{compact approximation property} (CAP) and the \emph{$\lambda$-bounded compact approximation property} ($\lambda$-BCAP), respectively. We shall also use the bounded approximation property for pairs of Banach
spaces, introduced in \cite{Bonder}. A pair $(X,Y)$ has the \emph{$\lambda$-bounded approximation property} ($\lambda$-BAP)  if $B_{\mathcal L(X,Y)}\subseteq\lambda\overline{B_{\mathcal F(X,Y)}}^{\tau_c}$. Replacing $\mathcal F(X,Y)$ by $\mathcal K(X,Y)$ defines the $\lambda$-BCAP for the pair; see \cite[Definition~2.1]{DJM}. For simplicity, we say that the pair $(X,Y)$ has the \emph{bounded approximation property} (resp., \emph{bounded compact approximation property}) if it has the $\lambda$-BAP (resp., $\lambda$-BCAP) for some $\lambda \geq 1$. If one of the spaces $X$ or $Y$ has the $\lambda$-BAP (respectively, the $\lambda$-BCAP), then the pair $(X,Y)$ has the $\lambda$-BAP (respectively, the $\lambda$-BCAP). The converse implications fail in general. Indeed, there are pairs with the BAP for which neither component has the BAP (see \cite[Example~4.2]{Bonder}). Likewise, a pair may have the BCAP even though neither component has the CAP (see \cite[Example~2.2]{DJM}).

\section{Relations among norm-attainment classes}\label{sec:preliminaries}
In this section we collect several general facts concerning the classes $\NA(X,Y)$, $\QNA(X,Y)$, $\NA_1(X,Y)$, and $\NA_2(X,Y)$ which will be used in the manuscript.

\subsection{General results}

Recall that an operator $T\in\mathcal L(X,Y)$ is said to be \emph{quasi-norm-attaining} if $\overline{T(B_X)} \cap \|T\|S_Y\neq \emptyset$. We denote the set of all such operators by $\QNA(X,Y)$. Observe that compact operators are always quasi-norm-attaining.

We first observe the following inclusions: 
\begin{equation}\label{eq:implications_NA}
    \NA (X,Y) \subseteq \QNA(X,Y)\subseteq  \NA_1 (X,Y) \subseteq \NA_2 (X, Y). 
\end{equation}
The first and the last inclusions are immediate, and the second one is easy to check (see  \cite[Proposition 3.3]{CCJM}). All the inclusions in \eqref{eq:implications_NA} can be checked to be strict already in the case $X=Y=c_0$ as follows. 
\begin{example}\label{ex:strict_inclusion}
    Fix $u = (1/2^n) \in S_{\ell_1}$ and a sequence $(\alpha_n)\subset(1/2,1)$ such that $\alpha_n\to1$. Let $(e_n)$ denote the canonical basis of $c_0$, and define $D\in \mathcal L(c_0,c_0)$ by $De_n:=\alpha_ne_n$ for every $n \in \mathbb{N}$.
    \begin{itemize}
        \itemsep0.25em
        \item If $T\colon c_0 \to c_0$ is defined as 
       $Tx = \langle u,x \rangle e_1$ for every $x \in c_0$, then $T \not\in \NA (c_0, c_0)$. However, $T \in \QNA(c_0,c_0)$ since it is a finite-rank operator (in particular, compact). 
       \item If $T\colon c_0\to c_0 \equiv\mathbb{R}\oplus_\infty c_0$ is defined as $Tx = (\langle u,x\rangle, Dx)$ for every $x \in c_0$, then $\|T\|=1$ and $T^* (1,0)=u$; hence $T \in \NA_1 (c_0,c_0)$. Notice that, for every $x \in c_0 \setminus \{0\}$,
       \[
       \|x\|>\|Tx\|\geq \|Dx\| \geq \frac{1}{2} \|x\|, 
       \]
       so $T$ is bounded below and does not belong to $\NA(c_0,c_0)$. Thus, $T \notin \QNA(c_0,c_0)$  by \cite[Lemma 2.1]{CCJM}.
    \item The operator $D$ does not belong to $\NA_1(c_0,c_0)$. On the other hand, $D^{**}$ attains its norm at $\zeta=(1,1,\ldots) \in S_{\ell_\infty}$. 
    \end{itemize}
    \end{example}

We next consider conditions under which some of these classes coincide. If the domain space $X$ is reflexive, then all three inclusions in \eqref{eq:implications_NA} are equalities. For reflexive range
spaces, we have the following related result.

\begin{proposition} \label{NA1-NA2-Y-reflexive} 
Let $X$ and $Y$ be Banach spaces. Then 
\begin{equation*}
    \QNA (X, Y) \cap \mathcal{W}(X,Y)  = {\NA_1 (X, Y) \cap \mathcal{W}(X,Y)  = \NA_2 (X,Y) \cap \mathcal{W}(X,Y).}
\end{equation*}
In particular, if $Y$ is reflexive, then $\QNA(X,Y)=\NA_1(X,Y) = \NA_2(X,Y)$.
\end{proposition}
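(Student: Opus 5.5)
Since \eqref{eq:implications_NA} already gives $\QNA(X,Y)\subseteq\NA_1(X,Y)\subseteq\NA_2(X,Y)$, intersecting with $\mathcal W(X,Y)$ leaves only one inclusion to prove: every weakly compact $T\in\NA_2(X,Y)$ lies in $\QNA(X,Y)$. The key input is Gantmacher's theorem. For weakly compact $T$ one has $T^{**}(X^{**})\subseteq Y$, where $Y$ is viewed canonically inside $Y^{**}$. Moreover $T^{**}$ is weak$^*$-to-weak continuous on bounded sets, because $T^{**}=T^{**}\restricted$ acts as the weak$^*$-to-weak$^*$ continuous operator into $Y\subseteq Y^{**}$, and on $Y$ the weak$^*$ topology of $Y^{**}$ restricts to the weak topology of $Y$.

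Fix $x^{**}\in S_{X^{**}}$ with $\|T^{**}x^{**}\|=\|T^{**}\|=\|T\|$, and put $y:=T^{**}x^{**}\in Y$, so that $\|y\|=\|T\|$. By Goldstine's theorem there is a net $(x_\lambda)\subseteq B_X$ with $x_\lambda\to x^{**}$ weak$^*$. The continuity property above gives $Tx_\lambda=T^{**}x_\lambda\to T^{**}x^{**}=y$ weakly in $Y$. Hence $y\in\overline{T(B_X)}^{w}$. Since $T(B_X)$ is convex, Mazur's theorem identifies this weak closure with the norm closure $\overline{T(B_X)}$. Therefore $y\in\overline{T(B_X)}\cap\|T\|S_Y$, which means $T\in\QNA(X,Y)$.

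For the final assertion, note that if $Y$ is reflexive then $\mathcal W(X,Y)=\mathcal L(X,Y)$, so the intersections are the whole classes and we obtain $\QNA(X,Y)=\NA_1(X,Y)=\NA_2(X,Y)$.

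The only point needing care is the weak$^*$-to-weak continuity of $T^{**}$ into $Y$, which is the standard part of Gantmacher's theorem. With that in hand, the rest is Goldstine's theorem combined with Mazur's theorem.
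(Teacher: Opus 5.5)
Your proof is correct and follows the paper's argument: Gantmacher's theorem puts $T^{**}x^{**}$ in $Y$, this point lies in the weak closure of $T(B_X)$, and convexity (Mazur) upgrades the weak closure to the norm closure. The only difference is that you spell out, via Goldstine's theorem and the weak$^*$-to-weak continuity of $T^{**}$ into $Y$, the step the paper leaves implicit; note that this continuity holds on all of $X^{**}$, so the qualifier ``on bounded sets'' is unnecessary.
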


\begin{proof} Let $T \in \NA_2(X,Y) \cap \mathcal{W} (X,Y)$ be given and let $x_0^{**} \in S_{X^{**}}$ such that $\|T^{**}(x_0^{**})\| = \|T\|$. Since $T$ is weakly compact, $T^{**}(x_0^{**}) \in Y$. Moreover, 
\[
T^{**} (x_0^{**}) \in \overline{T(B_X)}^w \cap \|T\|S_Y =  \overline{T(B_X)} \cap \|T\|S_Y
\]
because $T(B_X)$ is convex. Therefore, $T \in \QNA(X,Y)$.
\end{proof}

Proposition \ref{NA1-NA2-Y-reflexive} shows that reflexivity of the range space forces the classes $\QNA$, $\NA_1$, and $\NA_2$ to coincide. We now show that the equality between the first two classes already characterizes reflexivity, and that, for spaces not containing $\ell_1$, non-reflexivity also separates the latter two classes.

\begin{theorem}\label{thm:NA1_NA2_converse}
    Let $Y$ be a non-reflexive Banach space. Then 
    \begin{enumerate}[label=(\alph*)]
        \itemsep0.25em
        \item $\QNA(\ell_1,Y)\neq \NA_1 (\ell_1,Y)$; 
        \item If, in addition, $Y$ contains no isomorphic copy of $\ell_1$, then $\NA_1(\ell_1, Y) \not= \NA_2(\ell_1, Y)$.
    \end{enumerate}
\end{theorem}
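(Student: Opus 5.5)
For both parts I would use the standard dictionary for operators on $\ell_1$. An operator $T\in\mathcal L(\ell_1,Y)$ is the same thing as a bounded sequence $y_n=Te_n$ in $Y$. Its norm is $\|T\|=\sup_n\|y_n\|$, and its adjoint is $T^*y^*=(y^*(y_n))_n\in\ell_\infty$. Hence $T\in\NA_1(\ell_1,Y)$ if and only if some $y^*\in S_{Y^*}$ satisfies $\sup_n|y^*(y_n)|=\sup_n\|y_n\|$. Likewise $T\in\QNA(\ell_1,Y)$ if and only if the closed absolutely convex hull $K$ of $\{y_n\}$ meets $\|T\|S_Y$. Finally, $T^{**}$ maps $\ell_\infty^*$ into $Y^{**}$, and it sends a weak$^*$ cluster point of $(e_n)$ in $\ell_\infty^*$ to a weak$^*$ cluster point of $(y_n)$ in $Y^{**}$. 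Each part therefore reduces to choosing a suitable sequence $(y_n)$.

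For (a), James' theorem gives $y^*\in S_{Y^*}$ that does not attain its norm. I would pick $y_n\in B_Y$ with $y^*(y_n)\to1$, so that $\|T\|=1=\|T^*y^*\|$ and $T\in\NA_1$ for free. The whole difficulty is to make $K\cap S_Y=\emptyset$. The easy half is that no $z\in K$ satisfies $y^*(z)=1$, since $y^*$ does not attain its norm. This alone is not enough, because a point $z\in K$ could have norm one while $y^*(z)<1$.

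My first attempt would be to choose the $y_n$ inside a thin slice, so that for all signed finite combinations $z=\sum\lambda_ny_n$ one has $\|z\|\leq|y^*(z)|+\varepsilon(z)$ with a defect that vanishes only when the mass of $z$ escapes to infinity. If that fails, I would replace the single functional by James' sequential characterization of non-reflexivity. This gives $\theta\in(0,1)$, $(x_n)\subset B_Y$ and $(f_k)\subset B_{Y^*}$ with $f_k(x_n)=\theta$ for $n\geq k$ and $f_k(x_n)=0$ for $n<k$. I would build the $y_n$ from differences or averages of the $x_n$, so that norms of elements of $K$ can be computed against the $f_k$. Controlling the norms of elements of $K$, not just their $y^*$-values, is the step I expect to be the main obstacle.

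For (b), $Y$ is non-reflexive and contains no copy of $\ell_1$. By Eberlein–Šmulian there is a bounded sequence in $Y$ with no weakly convergent subsequence, and Rosenthal's $\ell_1$ theorem then yields a weakly Cauchy, non-weakly-convergent sequence $(y_n)$. Its weak$^*$ limit $y^{**}$ lies in $Y^{**}\setminus Y$. I would normalize so that $\|y_n\|<1$ for all $n$, $\sup_n\|y_n\|=1$ and $\|y^{**}\|=1$, for instance by passing to differences and rescaling. If $\mathcal U$ is a free ultrafilter and $\xi=w^*\text{-}\lim_{\mathcal U}e_n\in S_{\ell_\infty^*}$, then $T^{**}\xi=y^{**}$, so $T\in\NA_2$.

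To get $T\notin\NA_1$, note that each $|y^*(y_n)|\leq\|y_n\|<1$. Any functional attaining $\sup_n|y^*(y_n)|=1$ must therefore do so only in the limit. I would then perturb the sequence, as in (a) and in the spirit of Example~\ref{ex:strict_inclusion} with the weights $\alpha_n\to1$, to exclude $y^*(y^{**})=\pm1$ for $y^*\in S_{Y^*}$. The aim is that $\|y^{**}\|=1$ is attained only by elements of $Y^{***}\setminus Y^*$. Arranging this last point is again the delicate step.
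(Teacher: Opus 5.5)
Neither part of your proposal is proved as it stands. In each case you stop at the step that carries the content, and the ideas you sketch for that step do not work.

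\medskip\noindent\textbf{Part (a).} Your reductions are correct, but the choice of $(y_n)$ is missing, and the thin-slice idea cannot work because slices of $y^*$ do not control signed combinations. Here is a concrete failure. Take $Y=c_0\oplus_\infty\mathbb{R}$ and $y^*=((2^{-k})_k,0)$, which does not attain its norm. Let $y_n=\beta_n(\sum_{k\le n}e_k,(-1)^n)$ with $\beta_n\uparrow 1$. Then $\|y_n\|<1$ and $y^*(y_n)\to 1$. Nevertheless $x_N=\frac{1}{2N}\sum_{j=N+1}^{2N}(e_{2j}-e_{2j+1})\in B_{\ell_1}$ gives $Tx_N\to(0,1)$ in norm, so $T\in\QNA(\ell_1,Y)$. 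This happens even though the mass of $x_N$ escapes to infinity. The real obstruction is that $(y_n)$ has two different weak$^*$ cluster points.

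The paper removes this obstruction with two ingredients.
\begin{enumerate}
\item Take $y_n\in S_Y$ and set $Te_n=\beta_n z_n$ with $\beta_n\in(0,1)$, $\beta_n\to1$. Any $x_n=\sum_j\lambda^{(n)}_je_j\in B_{\ell_1}$ with $\|Tx_n\|\to1$ then satisfies $\sum_j(1-\beta_j)|\lambda^{(n)}_j|\to0$, so its mass must escape to infinity.
\item Let $Z=\overline{\operatorname{span}}\{y_n\}$, which is separable, and let $z^{**}$ be a weak$^*$ cluster point of $(y_n)$ in $Z^{**}$. Fix a weak$^*$-dense sequence $(h_k)$ in $\ker z^{**}\cap S_{Z^*}$. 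Pass to a subsequence $(z_n)$ of $(y_n)$ with $h_k(z_n)\to0$ for every $k$.
\end{enumerate}
If $Tx_n\to y\in S_Y$, these two facts give $h_k(y)=0$ for all $k$. Hence $y\in Z\subseteq Z^{**}$ vanishes on $\ker z^{**}$ and is a nonzero multiple of $z^{**}$. So $z^{**}\in Z$, and $z^{**}(y^*\restricted_Z)=1$ says that $y^*$ attains its norm, a contradiction.

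\medskip\noindent\textbf{Part (b).} Your reformulation is right. Under your normalization, $T\notin\NA_1(\ell_1,Y)$ exactly when the weak$^*$ limit $y^{**}$ does not attain its norm on $B_{Y^*}$. Your route to $T\in\NA_2$ also needs $\|y^{**}\|=\sup_n\|y_n\|$.

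A weakly Cauchy sequence supplied by Rosenthal's theorem controls neither condition.
\begin{enumerate}
\item $\|y^{**}\|$ can be strictly smaller than $\liminf_n\|y_n\|$. Rescaling does not change the ratio, and the differences $y_n-y_{n+1}$ are weakly null, so they lose the limit altogether.
\item $y^{**}$ may well attain its norm. Multiplying by weights $\alpha_n\to1$ does not change the weak$^*$ limit, so perturbations in the spirit of Example~\ref{ex:strict_inclusion} cannot help.
\end{enumerate}

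The missing idea is to reverse the order: first choose the limit, then approximate it.
\begin{enumerate}
\item Pass to a separable non-reflexive subspace $Z\subseteq Y$.
\item Since $Z^*$ is non-reflexive, James' theorem gives $z^{**}\in S_{Z^{**}}$ that does not attain its norm on $B_{Z^*}$.
\item Since $Z$ is separable and contains no copy of $\ell_1$, the Odell--Rosenthal theorem says $B_Z$ is weak$^*$-sequentially dense in $B_{Z^{**}}$. This gives $(x_n)\subseteq B_Z$ with $x_n\to z^{**}$ weak$^*$, and automatically $\|x_n\|\to1$.
\end{enumerate}
Now set $Te_n=(1-\frac1n)x_n$. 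For $y^*\in B_{Y^*}$, each $|y^*(Te_n)|$ is strictly less than $1$, and $y^*(Te_n)\to z^{**}(y^*\restricted_Z)$, which has modulus strictly less than $1$. Hence $\|T^*y^*\|<1$ and $T\notin\NA_1(\ell_1,Y)$. On the other hand, $T^{**}$ sends a weak$^*$ cluster point of $(e_n)$ to $z^{**}$, which has norm one, so $T\in\NA_2(\ell_1,Y)$.
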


\begin{proof}
(a) Let $y^* \in S_{Y^*}\setminus \NA(Y,\mathbb{R})$ and let $(y_n) \subseteq S_Y$ such that $y^* (y_n) \to 1$. Let $Z:=\overline{\text{span}}\{y_n \colon n\in\mathbb{N}\}$ and let $z^{**}$ be a weak$^*$-cluster point of $(y_n)$ in $B_{Z^{**}}$. Since $Z$ is separable, $B_{Z^*}$ is weak$^*$-metrizable; hence one may choose a weak$^*$-dense sequence $(h_n)$ in $\ker z^{**}\cap S_{Z^{*}}$. For each $n \in \mathbb{N}$, consider
\[
U_n:=\{u^{**} \in B_{Z^{**}}\colon |u^{**}(h_k)|< 1/n, \,k=1,\ldots , n\}
\]
which is a weak$^*$-open neighborhood of $z^{**}$. Choose an increasing sequence $(j_n)$ of natural numbers such that $y_{j_n} \in U_n$ for every $n\in\mathbb{N}$. For simplicity, put $z_n:= y_{j_n} \in S_Z$. Notice that 
\begin{equation}\label{eq:hk-zn}
    h_k (z_n) \to 0 \quad \text{as $n\to\infty$} 
\end{equation}
for each fixed $k \in \mathbb{N}$.

Next, let $(\beta_n)$ be a sequence in $(0,1)$ converging to $1$. Define $T\colon \ell_1\to Y$ by $Te_n:=\beta_n z_n$ for every $n \in \mathbb{N}$. First, note that $\|T\|=1$ and 
\[
1\geq \|T^{*} y^*\| \geq |y^* (Te_n)|=\beta_n |y^* (z_n)| \to 1.
\]
It follows that $T \in \NA_1 (\ell_1,Y)$. 

Suppose, for contradiction, that $T \in \QNA(\ell_1, Y)$. Let $(x_n) \subseteq B_{\ell_1}$ such that $(Tx_n) \to y$ for some $y \in S_Y$. In fact, the limit element $y$ belongs to $Z$. Writing $x_n = \sum_{j=1}^\infty \lambda_j^{(n)} e_j$ for each $n \in \mathbb{N}$, we have
\[
d_n := \sum_{j=1}^\infty (1-\beta_j) |\lambda_j^{(n)}| \leq 1 - \|Tx_n\| \to 0.
\]
Thus, for fixed $N \in \mathbb{N}$, 
\begin{equation}\label{eq:xn-support}
\sum_{j=1}^N |\lambda_j^{(n)}| \leq \frac{d_n}{\min_{1\leq i \leq N} (1-\beta_i)} \to 0 
\end{equation}
as $n\to\infty$. Therefore, for fixed $k\in\mathbb{N}$ and $N\in\mathbb{N}$, 
\begin{align*}
    |h_k (Tx_n) | &\leq \sum_{j=1}^N |\lambda_j^{(n)} | \beta_j |h_k(z_j) | + \sup_{j>N} |h_k (z_j)| \sum_{j>N} |\lambda_j^{(n)}| \beta_j \\
    &\leq  \sum_{j=1}^N |\lambda_j^{(n)} |  + \sup_{j>N} |h_k (z_j)|. 
\end{align*}
By letting $n \to\infty$, we then have from \eqref{eq:xn-support} that 
\[
|h_k (y)| \leq \sup_{j>N} |h_k (z_j)|.
\]
Finally, by letting $N\to\infty$, \eqref{eq:hk-zn} yields that $h_k(y)=0$. Since $k$ was arbitrary, we have that $h_k(y)=0$ for every $k \in \mathbb{N}$. Since $(h_k)$ is weak$^*$ dense in $\ker z^{**} \cap S_{Z^*}$, it follows that  $y \in (\ker z^{**})^\perp = \text{span}\{z^{**}\}$. This implies that $z^{**} \in Z \subseteq Y$. Since $z^{**}$ is a weak$^*$-cluster point of $(y_n)$, 
\[
z^{**} (y^* \restricted_{Z}) = 1;
\]
so this would imply that $y^*$ attains its norm at $z^{**} \in B_{Y}$, which is a contradiction. Consequently, we conclude that $T \not\in \QNA(\ell_1,Y)$.

(b) Choose a separable non-reflexive subspace $Z \subseteq Y$. Since $Z$ is non-reflexive, $Z^*$ is non-reflexive. By James' theorem, there is $z^{**} \in S_{Z^{**}}$ such that $|z^{**}(z^*)| < 1 $ for every $z^* \in B_{Z^*}$. Since $Z$ is separable and contains no copy of $\ell_1$, by the Odell-Rosenthal theorem (see \cite[Theorem~XIII.10]{Diestel-sequences}, for instance), $B_Z$ is weak$^*$ sequentially dense in $B_{Z^{**}}$. So, there exists a sequence $(x_n) \subseteq B_Z$ converging weak$^*$ to $z^{**}$. Since $\|z^{**}\| = 1$, we have that $\|x_n\| \rightarrow 1$. Write $z_n := (1-\frac{1}{n})x_n$ for every $n$. Then $\|z_n\| < 1$ for every $n$, $\|z_n\| \rightarrow 1$ and $z_n \rightarrow z^{**}$ weak$^*$. Define $T\colon  \ell_1 \rightarrow Y$ by $T e_n := z_n$ for every $n$. Then, $\|T\| = 1$ and $\|T^* y^*\| = \sup_n |(y^* \restricted_Z)(z_n)| < 1$ for every $y^* \in B_{Y^*}$ (otherwise, $z^{**}$ would attain its norm at $y^* \vert_Z)$. So, $T \not\in \NA_1(\ell_1, Y)$. Now, by taking a weak$^*$-cluster point $w$ of $(e_n)$ in $B_{\ell_1^{**}}$, we have $(T^{**} w)(y^*) = z^{**}(y^*\restricted_Z)$ for every $y^* \in B_{Y^*}$. Since $\|z^{**}\|=1$, we conclude that $\|T^{**}w\|=1$; hence $T \in \NA_2(\ell_1, Y)$.
\end{proof}

In fact, we get the following corollary. 

\begin{corollary}\label{cor:charac-Y-reflexive}
    Let $Y$ be a Banach space. Then the following are equivalent:
    \begin{enumerate}
        \itemsep0.25em
        \item $Y$ is reflexive.
        \item $\QNA(X,Y) = \NA_1(X,Y)$ for every Banach space $X$.
        \item $\QNA(\ell_1,Y) = \NA_1 (\ell_1,Y)$.
    \end{enumerate}
    If, moreover, $Y$ contains no isomorphic copy of $\ell_1$, then these conditions are also equivalent to 
\begin{enumerate}[label=(\arabic*), start=4] 
\itemsep0.25em
        \item $\NA_1(X,Y) = \NA_2(X,Y)$ for every Banach space $X$.
        \item $\NA_1(\ell_1,Y) = \NA_2 (\ell_1,Y)$.
    \end{enumerate}
\end{corollary}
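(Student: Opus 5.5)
The plan is to prove the two chains of equivalences by closing cycles of implications. Proposition~\ref{NA1-NA2-Y-reflexive} and Theorem~\ref{thm:NA1_NA2_converse} already supply most of the work.

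First chain. For (1)$\Rightarrow$(2), assume $Y$ is reflexive. Then Proposition~\ref{NA1-NA2-Y-reflexive} (its ``in particular'' part) gives $\QNA(X,Y)=\NA_1(X,Y)$ for every Banach space $X$. The implication (2)$\Rightarrow$(3) is just the specialization $X=\ell_1$. For (3)$\Rightarrow$(1), argue by contraposition: if $Y$ is non-reflexive, Theorem~\ref{thm:NA1_NA2_converse}(a) yields $\QNA(\ell_1,Y)\neq\NA_1(\ell_1,Y)$, so (3) fails. This closes the cycle (1)$\Rightarrow$(2)$\Rightarrow$(3)$\Rightarrow$(1).

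Second chain. Now assume in addition that $Y$ contains no copy of $\ell_1$. For (1)$\Rightarrow$(4), Proposition~\ref{NA1-NA2-Y-reflexive} again gives $\NA_1(X,Y)=\NA_2(X,Y)$ for every $X$ when $Y$ is reflexive. The implication (4)$\Rightarrow$(5) is the specialization $X=\ell_1$. For (5)$\Rightarrow$(1), argue by contraposition: if $Y$ is non-reflexive and contains no $\ell_1$, Theorem~\ref{thm:NA1_NA2_converse}(b) gives $\NA_1(\ell_1,Y)\neq\NA_2(\ell_1,Y)$. This closes the cycle (1)$\Rightarrow$(4)$\Rightarrow$(5)$\Rightarrow$(1).

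Note that the extra hypothesis is used only in (5)$\Rightarrow$(1), which is exactly where Theorem~\ref{thm:NA1_NA2_converse}(b) requires it. Since every step is a direct citation or a trivial specialization, I expect no real obstacle. The only point to check is that the ``in particular'' clause of Proposition~\ref{NA1-NA2-Y-reflexive} is correctly justified. For reflexive $Y$ every operator is weakly compact, so $\mathcal{W}(X,Y)=\mathcal{L}(X,Y)$, and the intersections in the proposition reduce to the full classes.
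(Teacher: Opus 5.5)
Your proposal is correct and is exactly the argument the paper intends: the corollary is stated without a separate proof, as an immediate consequence of Proposition~\ref{NA1-NA2-Y-reflexive} for the implications out of (1), and Theorem~\ref{thm:NA1_NA2_converse}(a),(b) for the contrapositive implications back to (1), with the specializations to $X=\ell_1$ in between. Your observation that reflexivity of $Y$ gives $\mathcal{W}(X,Y)=\mathcal{L}(X,Y)$ is the right justification of the ``in particular'' clause of that proposition.
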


Next, to transfer norm-attainment properties between successive adjoint levels, consider the transpose map
\[
\Xi_{X,Y}\colon \mathcal{L}(X,Y^*)\to \mathcal{L}(Y,X^*)
\]
defined by $\Xi_{X,Y}(T)=T^\top$ and 
$(T^\top y)(x):=(Tx)(y)$ for every $x\in X,\ y\in Y$.
The map $\Xi_{X,Y}$ is a surjective linear isometry, and
\[
\Xi_{Y,X}\circ\Xi_{X,Y}
=\operatorname{Id}_{\mathcal L(X,Y^*)}.
\]
In particular, $(T^\top)^\top=T$ for every
$T\in\mathcal L(X,Y^*)$.

\begin{proposition} \label{NA1-NA2-general} Let $X$ and $Y$ be Banach spaces, and let $T \in \mathcal{L}(X,Y^*)$. \begin{enumerate}
        \itemsep0.25em
        \item If $T^\top \in \NA(Y,X^*)$, then $T \in \NA_1 (X,Y^*)$. In other words,
\[
    (\Xi_{X,Y})^{-1}\bigl(\NA(Y,X^*)\bigr)
    \subseteq \NA_1(X,Y^*).
\]
\item If $T^\top \in \NA_1(Y,X^*)$, then $T \in \NA_2 (X,Y^*)$. In other words,
\[
(\Xi_{X,Y})^{-1}\bigl(\NA_1(Y,X^*)\bigr)
    \subseteq \NA_2(X,Y^*).
\]
\end{enumerate}
If $Y$ is reflexive, then both inclusions are equalities.
\end{proposition}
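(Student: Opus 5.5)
The whole argument rests on one identification: restricting $T^*$ to $Y\subseteq Y^{**}$ recovers the transpose, that is,
\[
T^*\restricted_Y = T^\top .
\]
Indeed, for $y\in Y$ and $x\in X$ we have $(T^*y)(x)=(Tx)(y)=(T^\top y)(x)$. Since $\Xi_{X,Y}$ is an isometry, $\|T^\top\|=\|T\|=\|T^*\|$. We will pair this with Goldstine's theorem at the second step.

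For (1), suppose $T^\top$ attains its norm at some $y_0\in S_Y$. Regard $y_0$ as an element of $S_{Y^{**}}$. Then
\[
\|T^*y_0\|=\|T^\top y_0\|=\|T^\top\|=\|T\|=\|T^*\|,
\]
so $T^*\in\NA(Y^{**},X^*)$, which means $T\in\NA_1(X,Y^*)$.

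For (2), suppose $(T^\top)^*\colon X^{**}\to Y^*$ attains its norm at some $x_0^{**}\in S_{X^{**}}$. The plan is to show that $T^{**}=(T^*)^*\colon X^{**}\to Y^{***}$ attains its norm at the same $x_0^{**}$.

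1. Restricting the functional $T^{**}x_0^{**}\in Y^{***}$ to $Y\subseteq Y^{**}$ gives
\[
(T^{**}x_0^{**})(y)=x_0^{**}(T^*y)=x_0^{**}(T^\top y)=((T^\top)^*x_0^{**})(y).
\]
2. Hence $\|T^{**}x_0^{**}\|\ge \|(T^\top)^*x_0^{**}\|=\|T^\top\|=\|T\|=\|T^{**}\|$.
3. The reverse inequality is automatic, so $T^{**}$ attains its norm at $x_0^{**}$.

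There is no real obstacle in (1) or (2); one only has to keep track of which canonical embedding is in use at each step.

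For the equalities under reflexivity of $Y$, identify $Y^{**}$ with $Y$. In that case $T^*$ is exactly $T^\top$, with no restriction needed. Then $T\in\NA_1(X,Y^*)$ says $T^*\in\NA(Y^{**},X^*)=\NA(Y,X^*)$, that is, $T^\top\in\NA(Y,X^*)$, which is the reverse inclusion for (1). Likewise, $(T^\top)^*=T^{**}$ as maps $X^{**}\to Y^*=Y^{***}$, so $T\in\NA_2(X,Y^*)$ is precisely $T^\top\in\NA_1(Y,X^*)$, giving the reverse inclusion for (2).

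The only point needing care is that these identifications are the canonical ones. This is justified because, when $Y$ is reflexive, the canonical map $J_Y$ is onto, and the identity $T^*J_Y=T^\top$ from the first paragraph then determines $T^*$ completely as $T^\top$ composed with $J_Y^{-1}$.
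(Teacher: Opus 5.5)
Your proof is correct and follows essentially the same route as the paper. The identities $T^\top = T^*\circ J_Y$ and $(T^\top)^* = J_Y^*\circ T^{**}$ give (1) and (2) directly, and when $Y$ is reflexive, surjectivity of $J_Y$ (so that $J_Y^*$ is an onto isometry) yields the reverse inclusions.
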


\begin{proof} 
(1) Note that $T^\top = T^* \circ J_Y$, where $J_Y \colon Y \hookrightarrow Y^{**}$ is the canonical embedding. Thus, if $\|T^\top y\|=\|T\|$ for some $y \in S_Y$, then 
\[
\| T^* ( J_Y (y)) \| = \|T^\top y\| = \|T\|, 
\]
so $T^*$ attains its norm at $J_Y (y) \in S_{Y^{**}}$. 

(2) Let $x_0^{**} \in S_{X^{**}}$ be such that $(T^\top)^*$ attains its norm at $x_0^{**}$. Since $(T^{\top})^* = J_Y^* \circ T^{**}$, we have 
\[
\|T^{**} x_0^{**}\| \geq \| (J_Y^* \circ T^{**} ) (x_0^{**})\| = \|T\|.
\]
This implies that $T^{**}$ attains its norm. 

If $Y$ is reflexive, then $J_Y$ is a surjective isometry, from which the desired equalities follow immediately.
\end{proof}

\begin{corollary} \label{holub-1-Y-reflexive} Let $X$ and $Y$ be Banach spaces. 
\begin{enumerate}
        \itemsep0.25em
        \item If $\NA(Y,X^*) = \mathcal{L}(Y,X^*)$, then $\NA_1 (X,Y^*) = \mathcal{L}(X,Y^*)$.
\item If $\NA_1(Y,X^*) = \mathcal{L}(Y,X^*)$, then $\NA_2 (X,Y^*) = \mathcal{L}(X,Y^*)$.
        \end{enumerate}
If $Y$ is reflexive, then the converses of both implications also hold.
\end{corollary}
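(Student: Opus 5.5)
The statement follows directly from Proposition~\ref{NA1-NA2-general}, using that the transpose map $\Xi_{X,Y}$ is a bijection between $\mathcal{L}(X,Y^*)$ and $\mathcal{L}(Y,X^*)$.

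For (1), assume $\NA(Y,X^*)=\mathcal{L}(Y,X^*)$ and fix any $T\in\mathcal{L}(X,Y^*)$. Its transpose $T^\top=\Xi_{X,Y}(T)$ lies in $\mathcal{L}(Y,X^*)=\NA(Y,X^*)$. Part (1) of Proposition~\ref{NA1-NA2-general} then gives $T\in\NA_1(X,Y^*)$. Equivalently,
\[
\mathcal{L}(X,Y^*)=(\Xi_{X,Y})^{-1}\bigl(\mathcal{L}(Y,X^*)\bigr)=(\Xi_{X,Y})^{-1}\bigl(\NA(Y,X^*)\bigr)\subseteq \NA_1(X,Y^*),
\]
where the first equality holds because $\Xi_{X,Y}$ is surjective. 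Statement (2) is proved the same way from part (2) of Proposition~\ref{NA1-NA2-general}: if every operator $Y\to X^*$ is in $\NA_1$, then every $T\in\mathcal{L}(X,Y^*)$ has $T^\top\in\NA_1(Y,X^*)$, hence $T\in\NA_2(X,Y^*)$.

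For the converses, assume $Y$ is reflexive. The last assertion of Proposition~\ref{NA1-NA2-general} then gives
\[
(\Xi_{X,Y})^{-1}\bigl(\NA(Y,X^*)\bigr)=\NA_1(X,Y^*)\quad\text{and}\quad (\Xi_{X,Y})^{-1}\bigl(\NA_1(Y,X^*)\bigr)=\NA_2(X,Y^*).
\]
Suppose $\NA_1(X,Y^*)=\mathcal{L}(X,Y^*)$, and let $S\in\mathcal{L}(Y,X^*)$. By surjectivity there is $T\in\mathcal{L}(X,Y^*)$ with $S=T^\top$; explicitly, $T=S^\top=\Xi_{Y,X}(S)$, since $(S^\top)^\top=S$. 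Then $T\in\NA_1(X,Y^*)=(\Xi_{X,Y})^{-1}(\NA(Y,X^*))$, so $S=\Xi_{X,Y}(T)\in\NA(Y,X^*)$. The converse of (2) follows identically from the second equality.

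There is no real obstacle here. The only point needing care is surjectivity of $\Xi_{X,Y}$, which is already recorded before Proposition~\ref{NA1-NA2-general}, together with the equality case of that proposition.
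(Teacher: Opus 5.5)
Your proposal is correct and takes the same approach as the paper, which states this corollary without a separate proof as an immediate consequence of Proposition~\ref{NA1-NA2-general}: bijectivity of $\Xi_{X,Y}$ turns the inclusions there into the stated implications, and the equality case for reflexive $Y$ gives the converses. Your write-up just spells out this step, including the identity $(S^\top)^\top=S$ used to pull back an arbitrary $S\in\mathcal{L}(Y,X^*)$.
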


\begin{corollary} \label{holub2} Let $X$ and $Y$ be Banach spaces. Suppose that $Y$ is reflexive and that $(Y,X^*)$ has the BCAP. If $\mathcal{K}(Y, X^*) \not= \mathcal{L}(Y, X^*)$ then $\NA_1(X, Y^*) \not= \mathcal{L}(X, Y^*)$.
\end{corollary}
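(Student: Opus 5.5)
The plan is to argue by contraposition through Corollary~\ref{holub-1-Y-reflexive}. Suppose $\NA_1(X,Y^*)=\mathcal{L}(X,Y^*)$. Since $Y$ is reflexive, the converse part of Corollary~\ref{holub-1-Y-reflexive}(1) gives $\NA(Y,X^*)=\mathcal{L}(Y,X^*)$. Concretely, every $S\in\mathcal L(Y,X^*)$ is $T^\top$ for $T=S^\top\in\mathcal L(X,Y^*)$, and Proposition~\ref{NA1-NA2-general} is an equality because $J_Y$ is onto.

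Now $Y$ is reflexive, the pair $(Y,X^*)$ has the BCAP, and every operator from $Y$ into $X^*$ attains its norm. At this point I will invoke the Holub--Mujica type theorem recalled as Theorem~\ref{thm:DJM} (the extension in \cite{DJM} for pairs with the BCAP and reflexive domain). It says that under exactly these hypotheses, $\NA(Y,X^*)=\mathcal L(Y,X^*)$ forces $\mathcal K(Y,X^*)=\mathcal L(Y,X^*)$. This contradicts the assumption $\mathcal K(Y,X^*)\neq\mathcal L(Y,X^*)$, so $\NA_1(X,Y^*)\neq\mathcal L(X,Y^*)$.

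The only delicate point is checking that the hypotheses of Theorem~\ref{thm:DJM} match: a reflexive domain $Y$ and the BCAP for the ordered pair $(Y,X^*)$, with no separability required. If that theorem also needed separability, I would reduce to a separable reflexive subspace of $Y$ carrying a non-compact operator. I don't expect this to be necessary, since the statement is phrased exactly to match \cite{DJM}. Everything else is bookkeeping with the transpose isometry $\Xi_{X,Y}$.
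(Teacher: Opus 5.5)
Your proposal is correct and is essentially the paper's argument run contrapositively: the paper applies Theorem~\ref{thm:DJM} (reflexive domain $Y$, pair $(Y,X^*)$ with the BCAP, no separability assumption) to turn $\mathcal{K}(Y,X^*)\neq\mathcal{L}(Y,X^*)$ into $\NA(Y,X^*)\neq\mathcal{L}(Y,X^*)$. It then transfers this to $\NA_1(X,Y^*)\neq\mathcal{L}(X,Y^*)$ via the reflexive-$Y$ converse in Corollary~\ref{holub-1-Y-reflexive}, so your fallback reduction to a separable subspace is not needed.
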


\begin{proof} By \cite[Theorem B]{DJM} (our Theorem \ref{thm:DJM}), our assumption implies that $\NA( Y, X^*) \neq \mathcal{L}(Y,X^*)$. Since $Y$ is reflexive, by Corollary \ref{holub-1-Y-reflexive}, $\NA_1 (X, Y^*) \neq \mathcal{L}(X,Y^*)$.
\end{proof}

\begin{corollary}
    If $Y$ is an infinite-dimensional separable reflexive Banach space, then $\NA_2(\ell_1, Y) \neq  \mathcal{L}(\ell_1, Y)$.
\end{corollary}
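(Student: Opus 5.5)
The plan is to reduce the claim to Corollary~\ref{holub2} with $X=\ell_1$ and with $Y$ replaced by the reflexive space $Y^*$. Since $Y$ is reflexive, $\NA_1(\ell_1,Y)=\NA_2(\ell_1,Y)$ by Proposition~\ref{NA1-NA2-Y-reflexive}. It therefore suffices to show $\NA_1(\ell_1,Y)\neq\mathcal L(\ell_1,Y)$.

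Write $Z:=Y^*$. Then $Z$ is reflexive, separable (being the dual of a separable reflexive space) and infinite-dimensional, and $Y\equiv Z^*$ canonically. We apply Corollary~\ref{holub2} to the pair $X=\ell_1$ and $Y$ replaced by $Z$. Its hypotheses are:
\begin{enumerate}
\item[(i)] $Z$ is reflexive;
\item[(ii)] the pair $(Z,\ell_1^*)=(Z,\ell_\infty)$ has the BCAP;
\item[(iii)] $\mathcal K(Z,\ell_\infty)\neq\mathcal L(Z,\ell_\infty)$.
\end{enumerate}
Condition (i) holds by assumption. For (ii), the space $\ell_\infty$ has the metric approximation property, being a $C(K)$-space. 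Hence the pair $(Z,\ell_\infty)$ has the $1$-BAP, and in particular the $1$-BCAP, by the remark in Subsection~\ref{subsec:notation}.

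For (iii), we use the separability of $Z$. Choose a sequence $(z_n^*)\subseteq S_{Z^*}$ norming $Z$, and define $T\colon Z\to\ell_\infty$ by $Tz=(z_n^*(z))_n$. Then $T$ is an isometric embedding. Since $Z$ is infinite-dimensional, $T$ is not compact. The corollary then gives
\[
\NA_1(\ell_1,Z^*)\neq\mathcal L(\ell_1,Z^*),
\]
that is, $\NA_1(\ell_1,Y)\neq\mathcal L(\ell_1,Y)$. By the first paragraph, this is equivalent to $\NA_2(\ell_1,Y)\neq\mathcal L(\ell_1,Y)$.

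The only point that needs care is the bookkeeping of the identification $Y\equiv Z^{*}$, namely that $\NA_1$ and $\NA_2$ are preserved under isometric identification of the range. This is routine. No genuine obstacle is expected, since the heavy lifting is done by \cite[Theorem B]{DJM} through Corollary~\ref{holub2}. If one prefers to avoid the BCAP of pairs, an alternative route is a direct argument: take the non-compact embedding $T\colon Z\to\ell_\infty$, note that $T\in\NA(Z,\ell_\infty)$ is forced by reflexivity only for compact operators via Holub--Mujica, and use $\ell_\infty$'s metric approximation property directly. The corollary route is the cleaner one.
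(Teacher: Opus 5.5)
Your proposal is correct and follows the paper's own proof essentially verbatim. Like the paper, you embed the separable space $Y^*$ isometrically into $\ell_\infty$ to get a non-compact operator, use the metric approximation property of $\ell_\infty$ for the BCAP of the pair $(Y^*,\ell_\infty)$, apply Corollary~\ref{holub2} with $X=\ell_1$, and finish with Proposition~\ref{NA1-NA2-Y-reflexive}. The closing ``alternative route'' remark is muddled, since an isometric embedding trivially attains its norm, and should simply be dropped; it plays no role in the argument.
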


\begin{proof}
    Since $Y^*$ is separable, it is linearly isometric to a subspace of $\ell_\infty$. This embedding is not compact because $Y^*$ is infinite-dimensional. Moreover, $\ell_\infty$ has the $1$-approximation property, so the pair $(Y^*,\ell_\infty)$ has the BCAP. Applying Corollary \ref{holub2} with $X =\ell_1$ and with $Y$ replaced by $Y^*$, we obtain $\NA_1(\ell_1, Y) \neq  \mathcal{L}(\ell_1, Y)$. Since $Y$ is reflexive, Proposition \ref{NA1-NA2-Y-reflexive} gives $\NA_1 (\ell_1,Y)=\NA_2 (\ell_1,Y)$, and the conclusion follows.
\end{proof}

It is easy to observe that if $X$ is reflexive, then $\mathcal{K}(X,Y) \subseteq \NA(X,Y)$ for every Banach space $Y$. This observation has a generalization to arbitrary domain spaces in the following result which is an easy application of Schauder's theorem. (A proof of a slightly stronger result can be found in \cite[Remark 3.23]{JungMartinRueda2023}.)

\begin{fact}\label{fact:compact_is_NA1}
    Let $X$ and $Y$ be Banach spaces. Then 
    \begin{equation*}
        \mathcal{K}(X,Y) \subseteq \NA_1 (X,Y).
    \end{equation*}
\end{fact}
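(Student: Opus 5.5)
Take $T\in\mathcal K(X,Y)$ with $T\neq 0$; the idea is to pass to the adjoint, which is compact by Schauder's theorem, and then use weak$^*$ compactness of $B_{Y^*}$. We want $y^*\in S_{Y^*}$ with $\|T^*y^*\|=\|T\|$. Since $\|T^*\|=\|T\|$, there is a sequence $(y_n^*)\subseteq B_{Y^*}$ with $\|T^*y_n^*\|\to\|T\|$. By Schauder's theorem, $T^*\colon Y^*\to X^*$ is compact.

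The key step is to show that $T^*$ is weak$^*$-to-norm continuous on the bounded set $B_{Y^*}$. First, $T^*$ is weak$^*$-to-weak$^*$ continuous. Second, $T^*(B_{Y^*})$ is relatively norm compact. On a norm compact subset of $X^*$, the weak$^*$ topology, being Hausdorff and coarser, coincides with the norm topology. Consequently, if a net $y_\gamma^*\to y^*$ weak$^*$ in $B_{Y^*}$, then $T^*y_\gamma^*\to T^*y^*$ weak$^*$ inside the norm-compact set $\overline{T^*(B_{Y^*})}$, and therefore in norm.

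Now let $y_0^*\in B_{Y^*}$ be a weak$^*$ cluster point of $(y_n^*)$, which exists by Banach--Alaoglu. Choosing a subnet converging weak$^*$ to $y_0^*$ and applying the continuity above, we get $\|T^*y_0^*\|=\lim\|T^*y_{n_\gamma}^*\|=\|T\|$. Since $\|T^*y_0^*\|\le\|T\|\,\|y_0^*\|$, this forces $\|y_0^*\|=1$. Hence $T^*$ attains its norm at $y_0^*\in S_{Y^*}$, that is, $T\in\NA_1(X,Y)$.

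The only delicate point is the weak$^*$-to-norm continuity on $B_{Y^*}$, and it reduces to the coincidence of the two topologies on a norm compact set. Alternatively, one can use that $\|T^*y^*\|=\sup_{x\in B_X}|y^*(Tx)|$ and that $\overline{T(B_X)}$ is norm compact. Pointwise convergence on a norm compact set, together with equicontinuity of $B_{Y^*}$, is uniform, so $y^*\mapsto\|T^*y^*\|$ is weak$^*$ continuous on $B_{Y^*}$. It therefore attains its supremum on the weak$^*$ compact set $B_{Y^*}$, which gives the same conclusion.
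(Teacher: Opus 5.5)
Your argument is correct and is exactly the ``easy application of Schauder's theorem'' the paper has in mind. Compactness of $T^*$ together with its weak$^*$-to-weak$^*$ continuity makes $T^*$ weak$^*$-to-norm continuous on $B_{Y^*}$, so $y^*\mapsto\|T^*y^*\|$ attains its supremum on the weak$^*$ compact ball $B_{Y^*}$; your alternative via uniform convergence on the norm compact set $\overline{T(B_X)}$ is also valid and even avoids Schauder's theorem.
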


We would also like to mention the following folklore result which is an immediate consequence of Rosenthal's $\ell_1$-theorem (see \cite[Theorem 11.2.1]{AlbiacKalton} for a reference to Rosenthal's theorem).

\begin{remark}\label{rem:Rosenthal} Let $X$ and $Y$ be Banach spaces. If $X$ does not contain an isomorphic copy of $\ell_1$ and $Y$ has the Schur property, then $\mathcal{L}(X,Y) = \mathcal{K}(X,Y)$. As a consequence, Fact~\ref{fact:compact_is_NA1} yields that 
    \[
    \mathcal{K}(X,Y)=\NA_1 (X,Y)=\NA_2 (X,Y)=\mathcal{L}(X,Y).
    \]
\end{remark}

We finish this subsection by presenting the following heredity lemma.

\begin{lemma}\label{lem:heredity}
    Let $X$, $Y$, and $Z$ be Banach spaces. Let $\Phi \colon Z \to Y$ be a linear isometry and $T \in \mathcal{L}(X,Z)$. Then 
\begin{enumerate}[label=(\alph*)]
    \itemsep0.25em
    \item $T \in \NA_1 (X,Z)$ if and only if $\Phi \circ T \in \NA_1 (X,Y)$.
    \item $T \in \NA_2 (X,Z)$ if and only if $\Phi \circ T \in \NA_2 (X,Y)$.
    \end{enumerate}
\end{lemma}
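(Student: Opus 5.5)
The plan is to reduce both parts to two basic facts about the isometry $\Phi$ and its adjoints. First, $\|\Phi\circ T\|=\|T\|$. Second, $\Phi^*\colon Y^*\to Z^*$ is a quotient map with $\Phi^*(B_{Y^*})=B_{Z^*}$, which follows from the Hahn--Banach theorem since every functional on $\Phi(Z)$ extends to $Y$ with the same norm. It also follows that $\Phi^{**}\colon Z^{**}\to Y^{**}$ is again a linear isometry, being the adjoint of a quotient map.

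For (a), compute $(\Phi\circ T)^*=T^*\circ\Phi^*$. If $T^*$ attains its norm at $z^*\in S_{Z^*}$, choose a Hahn--Banach extension $y^*\in S_{Y^*}$ with $\Phi^*y^*=z^*$. Then
\[
\|(\Phi T)^*y^*\|=\|T^*z^*\|=\|T\|=\|\Phi T\|,
\]
so $\Phi\circ T\in\NA_1(X,Y)$. Conversely, suppose $(\Phi T)^*$ attains its norm at $y^*\in S_{Y^*}$. Put $z^*=\Phi^*y^*$, so $\|z^*\|\leq1$ and $\|T^*z^*\|=\|\Phi T\|=\|T\|$. This forces $\|z^*\|=1$ when $T\neq0$, and the case $T=0$ is trivial. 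Hence $T^*$ attains its norm at $z^*$.

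For (b), use $(\Phi\circ T)^{**}=\Phi^{**}\circ T^{**}$. Since $\Phi^{**}$ is an isometry, for every $x^{**}\in S_{X^{**}}$ we get $\|(\Phi T)^{**}x^{**}\|=\|T^{**}x^{**}\|$. Together with $\|\Phi T\|=\|T\|$, this shows that $T^{**}$ and $(\Phi T)^{**}$ attain their norms at exactly the same points, which gives the equivalence.

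There is no serious obstacle. The only point needing care is justifying that $\Phi^{**}$ is an isometry. This holds because $\Phi^*$ maps $B_{Y^*}$ onto $B_{Z^*}$, so
\[
\|\Phi^{**}z^{**}\|=\sup_{y^*\in B_{Y^*}}|z^{**}(\Phi^*y^*)|=\|z^{**}\|.
\]
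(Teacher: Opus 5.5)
Your proof is correct and follows the paper's argument: a Hahn--Banach extension of $z^*\circ\Phi^{-1}$ for the forward direction of (a), the functional $\Phi^*y^*$ for the converse, and $(\Phi\circ T)^{**}=\Phi^{**}\circ T^{**}$ with $\Phi^{**}$ an isometry for (b). Your extra justifications fill in details the paper leaves implicit, namely that $\Phi^*(B_{Y^*})=B_{Z^*}$ makes $\Phi^{**}$ isometric, and that $\|\Phi^*y^*\|=1$ is forced when $T\neq 0$.
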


\begin{proof}
    (b) is immediate since $(\Phi \circ T)^{**} = \Phi^{**}\circ T^{**}$ and $\Phi^{**}$ is an isometry. To prove (a), suppose that $\|T^* z^*\|=\|T\|$ for some $z^* \in B_{Z^*}$. Then $z^* \circ \Phi^{-1} \colon \Phi(Z) \to \mathbb{R}$ is a bounded linear functional. By the Hahn-Banach theorem, $z^* \circ \Phi^{-1}$ admits a norm-preserving extension $\widetilde{z^* \circ \Phi^{-1}} \in Y^*$. Moreover, 
    \[
    (\Phi \circ T)^* ( \widetilde{z^* \circ \Phi^{-1}} ) = T^* z^*,
    \]
    so $\Phi \circ T \in \NA_1 (X,Y)$. Conversely, if $\|(\Phi \circ T)^* y^*\| = \|\Phi \circ T\|$ for some $y^* \in B_{Y^*}$, then $\| T^* (\Phi^*y^*)\| = \|T\|$; thus $T^*$ attains its norm at $\Phi^*y^* \in B_{Z^*}$.
\end{proof}

\subsection{When the target space is \texorpdfstring{$\ell_1$}{l1}}

Theorem \ref{thm:NA1_NA2_converse}(a) applies to $Y=\ell_1$ and yields 
\[
\QNA(\ell_1,\ell_1) \neq \NA_1 (\ell_1,\ell_1).
\]
However, part~(b) of Theorem \ref{thm:NA1_NA2_converse} does not cover the case $Y=\ell_1$. Moreover, the construction used in its proof cannot be adapted directly to this setting. Indeed, if a sequence in $\ell_1$ converges in the weak$^*$ topology of $\ell_1^{**}$, then its weak$^*$ limit necessarily belongs to $\ell_1$, by the Schur property. Hence that construction cannot produce a weak$^*$ limit outside $\ell_1$. It is therefore natural to ask whether $\NA_1(X,\ell_1)$ and $\NA_2(X,\ell_1)$ can nevertheless be different. The following answers this question affirmatively, even for $X=\ell_1$.

\begin{proposition}\label{prop:l1_l1}
    $\NA_1 (\ell_1, \ell_1) \neq \NA_2 (\ell_1,\ell_1)$. 
\end{proposition}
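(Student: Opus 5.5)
The plan is to build an explicit $T\in\mathcal L(\ell_1,\ell_1)$ through its columns $u_n:=Te_n\in\ell_1$, using the two concrete descriptions of the adjoints. Since $\|T\|=\sup_n\|u_n\|_1$ and $T^*y=(\langle y,u_n\rangle)_n\in\ell_\infty$ for $y\in\ell_\infty$, we have
\[
T\in\NA_1(\ell_1,\ell_1)\iff \exists\, y\in B_{\ell_\infty}\ \text{with}\ \sup_n|\langle y,u_n\rangle|=\sup_n\|u_n\|_1 .
\]
On the other side, $T^{**}(B_{\ell_1^{**}})$ is the weak$^*$ closure in $\ell_\infty^*$ of $\co\{\pm u_n\}$. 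If $w$ is a weak$^*$ cluster point of $(e_n)$ along an ultrafilter $\mathcal U$, then $T^{**}w=\text{weak}^*\text{-}\lim_{\mathcal U}u_n$. So it suffices to find such a $w$ with $\|w\|\leq 1$ and $\|T^{**}w\|=\sup_n\|u_n\|$.

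Concretely, I will normalize so that $\sup_n\|u_n\|=1$ with $\|u_n\|\uparrow 1$. The requirements are then:
\begin{enumerate}[label=(\roman*)]
\item for every $y\in B_{\ell_\infty}$ one has $\sup_n|\langle y,u_n\rangle|<1$;
\item some ultrafilter limit $\varphi=\lim_{\mathcal U}u_n\in\ell_\infty^*$ has $\|\varphi\|=1$.
\end{enumerate}
By Fact~\ref{fact:Hewitt-Yosida}, $\varphi=x+\psi$ with $x\in\ell_1$ and $\psi\in c_0^\perp$, and $\|\varphi\|=\|x\|_1+\|\psi\|$. For (ii), the natural choice is $u_n=x_n+z_n$ with $x_n\to x$ coordinatewise (the $\ell_1$-part) and $z_n$ supported far out (producing $\psi$), arranged so that no mass is lost in the limit.

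The tension is in (i). If the tails $z_n$ have disjoint supports, or all have the same sign pattern, then a single $y$, namely the sign of $x$ on the head together with the sign pattern of the tails, already gives $\sup_n\langle y,u_n\rangle=1$, so $T\in\NA_1$. The tails must therefore impose \emph{incompatible sign requirements}. My first attempt uses an overlapping triangular pattern: $u_n$ asks for $y_k=+1$ for $k<n$ and $y_n=-1$, for instance
\[
u_n=c\sum_{k<n}2^{-k}e_k-c'\,e_n .
\]
Such a $u_n$ is normed by its own sign vector, but any fixed $y$ must violate either the early or the late requirements. The weights will be chosen so that the penalty for violating a requirement does not tend to zero along $n$; for example, I will give the conflicting coordinate a fixed mass $\delta$ instead of a vanishing one. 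I will then verify (i) by a case split on whether $y_k\geq 1-\eta$ for all $k$ or not.

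In the first case, the coordinate $e_n$ with its fixed mass $\delta$ costs at least about $2\delta$ for every $n$. In the second case, fix a $k_0$ with $y_{k_0}<1-\eta$. Every $u_n$ with $n>k_0$ loses at least $c\,2^{-k_0}\eta$, and only the finitely many $n\leq k_0$ remain; for each of these, the strict inequality $\|u_n\|<1$ gives $|\langle y,u_n\rangle|\leq\|u_n\|<1$. Hence the supremum is strictly below $1$.

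For (ii), I will compute $\lim_{\mathcal U}u_n$ against $y=\mathbf 1$ modified on the moving coordinate. Because the moving coordinate escapes to infinity, its contribution passes into $c_0^\perp$ with full norm $\delta$, and the Hewitt--Yosida decomposition gives $\|\varphi\|=\|x\|_1+\delta=1$.

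I expect the main obstacle to be balancing (i) against (ii). Keeping the penalty uniformly positive pushes mass into the escaping coordinates, and this must not reduce $\|\varphi\|$ below $\lim\|u_n\|$. So the exact choice of head and tail weights, and possibly the replacement of the single coordinate $e_n$ by normalized blocks $|A_n|^{-1}\sum_{k\in A_n}\pm e_k$ whose sign patterns conflict across different $n$, is the step I would adjust first.
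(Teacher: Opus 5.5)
Your construction is correct and is essentially the paper's. The paper takes $Te_n=x_0-e_n$ with $x_0=(2^{-n})$, i.e.\ a fixed positive $\ell_1$ head plus a moving bump of opposite sign. It rules out $\NA_1$ because $\sup_n|\zeta(x_0)-\zeta_n|=2$ forces $\zeta=\pm\mathbf 1$, and it gets $\NA_2$ by evaluating $T^{**}$ at a Banach limit and using $\ell_\infty^*=\ell_1\oplus_1 c_0^\perp$, exactly as in your step (ii). With your concrete choice $u_n=(1-\delta)\sum_{k<n}2^{-k}e_k-\delta e_n$, the balancing obstacle you anticipate does not occur, since the escaping mass reappears in full as $-\delta\lim_{\mathcal U}e_n\in c_0^\perp$. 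Your one-sided case split only needs to be run for both $y$ and $-y$ to bound $|\langle y,u_n\rangle|$.
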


\begin{proof}
    Let $z_0 := (1,1,1,\ldots ) \in S_{\ell_\infty}$, $x_0 := (1/2^n ) \in S_{\ell_1}$, and define $T \colon \ell_1 \to \ell_1$ by $T = z_0 \otimes x_0 - \id_{\ell_1}$, that is,
    \[
    Tx = z_0 (x) x_0 - x \quad (x \in \ell_1).
    \]
    Note that $\|T\|\leq 2$ and letting $(e_n)$ be the canonical basis for $\ell_1$, 
    \[
    Te_n = x_0 - e_n  = \left(\frac{1}{2}, \ldots, \frac{1}{2^{n-1}}, \frac{1}{2^n} -1, \frac{1}{2^{n+1}}, \ldots \right). 
    \]
    Thus, $\|Te_n\| = 2-2^{1-n} \to 2$ as $n \to\infty$. It follows that $\|T\|=2$.
   We claim that $T \not\in \NA_1 (\ell_1,\ell_1)$. Assume to the contrary that $\|T^* \zeta\| = 2$ for some $\zeta \in S_{\ell_\infty}$. Then 
    \[
    2 = \sup_{n\in\mathbb{N}} |\zeta(x_0) z_0 (e_n) - \zeta (e_n)| = \sup_{n\in\mathbb{N}} |\zeta(x_0)  - \zeta (e_n)|.
    \]
    This implies that $|\zeta(x_0)|=1$; hence $\zeta = (1,1,1,\ldots)$ or $(-1,-1,-1,\ldots )$. In any case, $|\zeta(x_0)-\zeta(e_n)|=0$, which is a contradiction.
    It remains to prove that $T \in \NA_2 (\ell_1,\ell_1)$. To this end, let $\phi\in S_{\ell_\infty^*}$ be a Banach limit. Then, $T^{**} \phi = x_0 - \phi$. Since $\ell_{\infty}^* = \ell_1 \oplus_1 c_0^{\perp}$ (see Fact~\ref{fact:Hewitt-Yosida}), we have that $\|T^{**} \phi\| = \|x_0\| + \|\phi\| = 2$.
\end{proof}

As a direct consequence of the above result and Lemma \ref{lem:heredity}, we obtain the following.

\begin{corollary}
    If a Banach space $Y$ contains an isometric copy of $\ell_1$, then $\NA_1 (\ell_1, Y)\neq \NA_2 (\ell_1, Y)$.
\end{corollary}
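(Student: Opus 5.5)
The plan is to transport the operator from Proposition~\ref{prop:l1_l1} into $Y$ and then apply Lemma~\ref{lem:heredity} in both directions. Let $\Phi\colon \ell_1\to Y$ be a linear isometry onto the given isometric copy of $\ell_1$ in $Y$. Let $T=z_0\otimes x_0-\id_{\ell_1}\in\mathcal L(\ell_1,\ell_1)$ be the operator constructed in the proof of Proposition~\ref{prop:l1_l1}. That proof shows $T\in \NA_2(\ell_1,\ell_1)\setminus\NA_1(\ell_1,\ell_1)$. We then consider $S:=\Phi\circ T\in\mathcal L(\ell_1,Y)$.

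First apply Lemma~\ref{lem:heredity}(a) with $X=\ell_1$ and $Z=\ell_1$. Since $T\notin\NA_1(\ell_1,\ell_1)$, the equivalence in that lemma gives $S\notin \NA_1(\ell_1,Y)$. Next apply Lemma~\ref{lem:heredity}(b). Since $T\in\NA_2(\ell_1,\ell_1)$, it gives $S\in\NA_2(\ell_1,Y)$. Hence $S$ witnesses $\NA_1(\ell_1,Y)\neq\NA_2(\ell_1,Y)$.

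There is no genuine obstacle. The only point needing care is the direction of the equivalence in part (a). Failure of $\NA_1$ must pass from $T$ to $\Phi\circ T$, which is exactly the implication ``$\Phi\circ T\in\NA_1\Rightarrow T\in\NA_1$''. That implication is the one proved via $\Phi^*$. Concretely, if $(\Phi\circ T)^*$ attains its norm at some $y^*\in B_{Y^*}$, then $T^*$ attains its norm at $\Phi^*y^*$. This uses $\|\Phi\circ T\|=\|T\|$, which holds because $\Phi$ is an isometry.
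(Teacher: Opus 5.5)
Your proposal is correct and follows the paper's own argument: compose the operator $T=z_0\otimes x_0-\id_{\ell_1}$ from Proposition~\ref{prop:l1_l1} with the isometric embedding $\Phi\colon\ell_1\to Y$, then use Lemma~\ref{lem:heredity}(a) to keep $\Phi\circ T$ outside $\NA_1(\ell_1,Y)$ and part~(b) to keep it in $\NA_2(\ell_1,Y)$. Your check that part~(a) is used in the direction ``$\Phi\circ T\in\NA_1\Rightarrow T\in\NA_1$'', via $\Phi^*$, is also right.
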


\subsection{The Schur property}

The following proposition gives a characterization of the Schur property of $X^*$ in terms of weakly compact operators into $c_0$ and the classes $\NA_1$ and $\NA_2$.

\begin{proposition}\label{prop:Schur_equivalence}
    Let $X$ be a Banach space. Then the following are equivalent.
	\begin{enumerate}[label=(\alph*)]
    \itemsep0.25em
    \item $X^*$ has the Schur property.
    \item $\mathcal{W}(X,Z)=\mathcal{K}(X,Z)$ for every Banach space $Z$.
    \item $\mathcal{W}(X,c_0)=\mathcal{K}(X,c_0)$.
    \item $\mathcal{W}(X,c_0) \subseteq \NA_1 (X,c_0)$.
    \item $\mathcal{W}(X,c_0) \subseteq \NA_2 (X,c_0)$.
    \end{enumerate}
\end{proposition}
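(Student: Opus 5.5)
I will prove the cycle (a)$\Rightarrow$(b)$\Rightarrow$(c)$\Rightarrow$(d)$\Rightarrow$(e)$\Rightarrow$(a).

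\emph{(a)$\Rightarrow$(b).} Let $T\in\mathcal W(X,Z)$. By Gantmacher's theorem $T^*\colon Z^*\to X^*$ is weakly compact, so every bounded sequence $(T^*z_n^*)$ has a weakly convergent subsequence. The Schur property of $X^*$ makes that subsequence norm convergent. Hence $T^*$ is compact, and $T$ is compact by Schauder's theorem.

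\emph{The easy implications.} (b)$\Rightarrow$(c) is trivial. (c)$\Rightarrow$(d) follows from Fact~\ref{fact:compact_is_NA1}, since $\mathcal K(X,c_0)\subseteq \NA_1(X,c_0)$. (d)$\Rightarrow$(e) follows from \eqref{eq:implications_NA}.

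\emph{(e)$\Rightarrow$(a), by contraposition.} This is the main step. Suppose $X^*$ fails the Schur property. Then there is a weakly null sequence $(x_n^*)\subseteq X^*$ that is not norm null. Passing to a subsequence and normalizing, we may assume $\|x_n^*\|=1$ and $x_n^*\to0$ weakly. Fix $(\beta_n)\subseteq(0,1)$ strictly increasing to $1$ and define $T\colon X\to c_0$ by $Tx:=(\beta_n x_n^*(x))_n$. Since $x_n^*(x)\to0$, $T$ maps into $c_0$.

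We check that $T$ is weakly compact. Its adjoint $T^*\colon\ell_1\to X^*$ is $T^*e_n=\beta_nx_n^*$, so $T^*(B_{\ell_1})$ lies in the closed absolutely convex hull of $\{\beta_n x_n^*\}\cup\{0\}$. That set is weakly compact because $\beta_nx_n^*\to0$ weakly, and Krein--Šmulian gives weak compactness of the closed convex hull. So $T^*$, and hence $T$, is weakly compact. Also $\|T\|=\sup_n\beta_n=1$.

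We show $T\notin\NA_2(X,c_0)$. The second adjoint acts by $T^{**}x^{**}=(\beta_n x^{**}(x_n^*))_n$, which lies in $c_0$ because $T^{**}$ maps into $c_0$ when $T$ is weakly compact (alternatively, because $x_n^*\to0$ weakly). If $\|T^{**}x^{**}\|=1$ with $\|x^{**}\|\le1$, the supremum of $|\beta_nx^{**}(x_n^*)|$ over a $c_0$-sequence is attained at some index $n$. But $|\beta_nx^{**}(x_n^*)|\le\beta_n<1$, a contradiction. Thus $T\in\mathcal W(X,c_0)\setminus\NA_2(X,c_0)$, contradicting (e).

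The only delicate points are the weak compactness of $T$, handled by Krein--Šmulian together with $x_n^*\to0$ weakly, and the fact that $T^{**}x^{**}\in c_0$. The latter is what forces the supremum to be attained and so produces the strict inequality.
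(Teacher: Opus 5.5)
Your proposal is correct and follows the paper's proof essentially verbatim: the same cycle of implications, and for (e)$\Rightarrow$(a) the same operator $Tx=(\beta_n x_n^*(x))_n$ built from a normalized weakly null sequence in $X^*$ (the paper takes $\beta_n=n/(n+1)$), whose second adjoint takes values in $c_0$ and therefore satisfies $\|T^{**}x^{**}\|<\|x^{**}\|$ for $x^{**}\neq 0$. The only cosmetic difference is that you check weak compactness of $T$ via Krein--\v{S}mulian applied to $T^*(B_{\ell_1})$, whereas the paper reads it off directly from $T^{**}(X^{**})\subseteq c_0$, which you also note as an alternative.
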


\begin{proof}
    (a) $\implies$ (b): If $T \in \mathcal{W}(X,Z)$, then $T^* \in \mathcal{W}(Z^*, X^*)$. Since $X^*$ has the Schur property, every weakly compact set in $X^*$ is norm compact. Thus, $T^*$ is compact and so is $T$.     The implications (b) $\implies$ (c) and (d) $\implies$ (e) are obvious. Note that (c) $\implies$ (d) follows from Fact~\ref{fact:compact_is_NA1}. It remains to prove that (e) $\implies$ (a). Assume to the contrary that $X^*$ does not have the Schur property. Then, there exists a sequence $(x_n^*)$ in  $S_{X^*}$ that weakly converges to $0$. Define the operator $T \colon X \to c_0$ by 
	\[
    (Tx)(n)=\frac{n}{n+1} x_n^*(x), \quad \forall x \in X, \ \forall n \in \N. 
    \]
	Note that $\|T\|=1$ and observe that for $z^{**} \in {X^{**}}$, 
    \[
    (T^{**} z^{**})(n) = \frac{n}{n+1} z^{**}(x_n^*) \to 0
    \]
    since $z^{**}(x_n^*)\to0$. It follows that $T^{**} z^{**} \in c_0$ for every $z^{**} \in X^{**}$; hence $T$ is weakly compact. On the other hand,  
	\[ \|T^{**} z^{**}\| = \sup_{n \in \N} |(T^{**}(z^{**}))(n)| = \sup_{n \in \N} \frac{n}{n+1} |z^{**}(x_n^*)| < \|z^{**}\|  \]
	for every $z^{**} \in X^{**} \setminus \{0\}$. Therefore, $T \notin \NA_2 (X, c_0)$.
\end{proof}

\begin{corollary}\label{cor:Schur} Let $X$ be a Banach space. If $\NA_2 (X, c_0)=\mathcal{L}(X, c_0)$, then $X^*$ has the Schur property. 
\end{corollary}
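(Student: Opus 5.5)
This is immediate from Proposition~\ref{prop:Schur_equivalence}, specifically from the implication (e) $\implies$ (a) established there. Assume $\NA_2(X,c_0)=\mathcal{L}(X,c_0)$. Since $\mathcal{W}(X,c_0)\subseteq\mathcal{L}(X,c_0)$, we get
\[
\mathcal{W}(X,c_0)\subseteq \NA_2(X,c_0).
\]
This is exactly condition (e) of Proposition~\ref{prop:Schur_equivalence}. The equivalence (e) $\iff$ (a) then yields that $X^*$ has the Schur property.

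If one prefers not to cite the full equivalence, the argument can be run directly by contradiction, recalling the construction from the proof of (e) $\implies$ (a).

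\begin{enumerate}
\item Suppose $X^*$ fails the Schur property. Then there is a weakly null sequence $(x_n^*)\subseteq S_{X^*}$.
\item Define $T\colon X\to c_0$ by $(Tx)(n)=\frac{n}{n+1}x_n^*(x)$. This lands in $c_0$ because $(x_n^*)$ is weakly null, hence weak$^*$ null. Moreover $\|T\|=1$.
\item For every $z^{**}\in X^{**}$ we have $(T^{**}z^{**})(n)=\frac{n}{n+1}z^{**}(x_n^*)$. Weak nullity gives $z^{**}(x_n^*)\to0$, so the supremum defining $\|T^{**}z^{**}\|$ is attained at some coordinate $n$.
\item At that coordinate the value is at most $\frac{n}{n+1}\|z^{**}\|<\|z^{**}\|$ whenever $z^{**}\neq0$. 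Hence $T^{**}$ does not attain its norm, which contradicts $T\in\NA_2(X,c_0)$.
\end{enumerate}

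There is no real obstacle. The only point needing a little care is the strict inequality in the last step, and it follows because the sequence $\bigl(\frac{n}{n+1}z^{**}(x_n^*)\bigr)_n$ tends to $0$ when $z^{**}\neq 0$, so its supremum is a maximum achieved at a finite index.
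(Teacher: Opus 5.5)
Your proposal is correct and matches the paper: there the corollary is stated without a separate proof, as the immediate consequence of the implication (e) $\Rightarrow$ (a) in Proposition~\ref{prop:Schur_equivalence} via $\mathcal{W}(X,c_0)\subseteq\mathcal{L}(X,c_0)=\NA_2(X,c_0)$. Your unpacked direct argument is the same diagonal operator $(Tx)(n)=\frac{n}{n+1}x_n^*(x)$ used in the paper's proof of that implication, and you justify the strict inequality $\|T^{**}z^{**}\|<\|z^{**}\|$ a little more explicitly than the paper does.
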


\section{Norm-attaining second adjoints on \texorpdfstring{$\ell_1$}{l1}-preduals}\label{sec:ell_1_preduals}

This section is devoted to the proof of Theorems \ref{thm:main_c0} and \ref{thm:main_W_alpha_W_beta} from the introduction. We begin in Subsection \ref{subsec:c_0_target} by establishing the key disjoint-support lemma (see Lemma \ref{lemma:disjoint_support_lemma}) and using it to prove that the equalities $\mathcal{L}(c_0,c_0)=\NA_2 (c_0, c_0)$ and $\mathcal{L}(c,c_0)=\NA_2 (c,c_0)$ hold true. The next two subsections carry out a systematic study of the equality $\mathcal{L}(W_\alpha,W_\beta)=\NA_2 (W_\alpha,W_\beta)$ for $\ell_1$-preduals within the class $\{W_\alpha\}$. In Subsection \ref{subsec:c_target}, we consider the case of the target space $c$ (see Theorem \ref{thm:target_c}), and in Subsection \ref{subsec:W_beta_target}, we handle the general case $W_\beta$ (see Theorem \ref{thm:target_W_beta}). Together, these results constitute the complete picture described in Theorem \ref{thm:main_W_alpha_W_beta}. Finally, Subsection \ref{subsec:l1} is devoted to proving the $\ell_1$ part of Theorem \ref{thm:main_c0}.

\subsection{The target space \texorpdfstring{$c_0$}{c0}}\label{subsec:c_0_target}
The positive results in this section are all based on a common principle: after passing to a suitable subsequence, one may perturb a sequence in $\ell_1$ so that the supports become disjoint while preserving its norm asymptotically. This disjointness allows us to construct an element of $S_{\ell_\infty}$ that simultaneously attains the norm on the perturbed vectors, which in turn yields a norm-attaining point for the second adjoint.

\begin{lemma} \label{lemma:disjoint_support_lemma} Let $(a_n)$ be a bounded sequence in $\ell_1$ such that $\lim_{n\to\infty} a_n(k) = 0$ for every $k \in \N$. Then, there is a sequence $(b_n)$ of finitely supported vectors in $\ell_1$ such that 
	\begin{enumerate}[label=(\alph*)]
    \itemsep0.25em
		\item $\supp(b_n) \cap \supp(b_m) = \varnothing$ for every $n \not= m$ and 
		\item there exists an increasing sequence $(n_j)$ of natural numbers such that $\|a_{n_j} - b_{j}\| \to 0$ as $j \to \infty$.
	\end{enumerate}
    Moreover, one may arrange that $\|b_j\|\leq \|a_{n_j}\|$ for every $j \in \mathbb{N}$.
\end{lemma}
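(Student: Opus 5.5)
We will construct $(b_j)$ and $(n_j)$ inductively by a gliding-hump argument: each $b_j$ will be a truncation of $a_{n_j}$ to a finite interval of coordinates, and the intervals will be pairwise disjoint. The two inputs are that each $a_n$ has tails that are small in $\ell_1$, and that the coordinatewise convergence $a_n(k)\to 0$ makes the head of $a_n$ on any fixed finite set small for large $n$.

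For $x\in\ell_1$ and a set $A\subseteq\N$, write $P_A x$ for the restriction of $x$ to $A$ (zero off $A$). We also set $p_0:=0$.

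The inductive step goes as follows. Suppose $n_1<\dots<n_{j-1}$ and integers $p_0<p_1<\dots<p_{j-1}$ have been chosen.

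1. Since $\lim_n a_n(k)=0$ for each of the finitely many $k\le p_{j-1}$, we have $\|P_{[1,p_{j-1}]}a_n\|\to 0$ as $n\to\infty$. So we can pick $n_j>n_{j-1}$ with
\[
\|P_{[1,p_{j-1}]}a_{n_j}\|<\tfrac{1}{2j}.
\]

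2. Since $a_{n_j}\in\ell_1$, we can pick $p_j>p_{j-1}$ with
\[
\|P_{(p_j,\infty)}a_{n_j}\|<\tfrac{1}{2j}.
\]

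3. Define $b_j:=P_{(p_{j-1},p_j]}a_{n_j}$.

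Each $b_j$ is finitely supported, with support contained in $(p_{j-1},p_j]$. These intervals are pairwise disjoint, which gives (a).

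For (b), the vector $a_{n_j}-b_j$ is the sum of the head $P_{[1,p_{j-1}]}a_{n_j}$ and the tail $P_{(p_j,\infty)}a_{n_j}$. Hence
\[
\|a_{n_j}-b_j\|<\tfrac{1}{2j}+\tfrac{1}{2j}=\tfrac1j\to 0,
\]
and $(n_j)$ is strictly increasing by construction.

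The "moreover" clause is automatic: $b_j$ is a coordinate restriction of $a_{n_j}$, so $\|b_j\|\le\|a_{n_j}\|$.

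The only step needing care is the choice of $n_j$. We must use that the heads on a *fixed* finite set vanish, and this is exactly where the hypothesis $\lim_n a_n(k)=0$ enters. Boundedness of $(a_n)$ is not actually needed for this construction.
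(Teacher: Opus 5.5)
Your proof is correct and is essentially the paper's gliding-hump argument. The paper chooses $n_{j+1}$ so that $a_{n_{j+1}}$ is small on the previously used finite set $H_j=F_1\cup\dots\cup F_j$, truncates to a finite set $G_{j+1}$ carrying most of the mass, and sets $F_{j+1}=G_{j+1}\setminus H_j$; this is your construction with arbitrary finite sets in place of your consecutive intervals $(p_{j-1},p_j]$, and, as you observe, the boundedness of $(a_n)$ is not used there either.
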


The result follows from the classical gliding hump argument of the Bessaga--Pełczyński Selection Principle (see \cite[Proposition 1.3.10]{AlbiacKalton}, for instance) applied to $\ell_1$. We include a direct proof for the sake of completeness.

\begin{proof} 
Choose $n_1 \in \mathbb{N}$. Since $a_{n_1} \in \ell_1$, there exists a finite set $F_1 \subseteq \mathbb{N}$ such that 
\begin{equation}\label{eq:a_n_1}
\sum_{k \notin F_1} |a_{n_1} (k) | < \frac{1}{2}.
\end{equation}
Put $b_1 := a_{n_1} \chi_{F_1}$, i.e., $b_1 (k) = a_{n_1} (k)$ for $k \in F_1$ and $b_1 (k) =0$ otherwise. Note that \eqref{eq:a_n_1} implies that $\|a_{n_1}-b_1 \| < 1/2$.

 Suppose that, for some $j \geq 1$, an increasing sequence
$n_1, \ldots, n_j$ and pairwise disjoint finite sets $F_1, \ldots, F_{j} \subseteq \mathbb{N}$ have already been chosen such that $b_1 = a_{n_1} \chi_{F_1}, \ldots, b_j = a_{n_j} \chi_{F_j}$ satisfy that 
 \[
 \|a_{n_i} - b_i\| < \frac{1}{2i} \quad \text{ for } i=1,\ldots, j.
 \]
  Let 
 \[
 H_{j}:= F_1 \cup \ldots \cup F_{j}.
 \]
 Since $H_{j}$ is finite and $a_n(k) \rightarrow 0$ for each fixed $k$, we have that 
\begin{equation*}
    \sum_{k \in H_{j}} |a_n(k)| \rightarrow 0
\end{equation*}
as $n \rightarrow \infty$. Therefore, we may choose $n_{j+1} > n_{j}$ large enough so that 
\begin{equation}\label{eq:H_j}
    \sum_{k \in H_{j}} |a_{n_{j+1}}(k)| < \frac{1}{4(j+1)}.
\end{equation}
Since $a_{n_{j+1}} \in \ell_1$, there exists a finite set $G_{j+1} \subseteq \mathbb{N}$ such that 
\begin{equation}\label{eq:G_j+1}
    \sum_{k \not\in G_{j+1}} |a_{n_{j+1}}(k)| < \frac{1}{4(j+1)}.
\end{equation}
Define $F_{j+1}:= G_{j+1} \setminus H_{j}$ and $b_{j+1} := a_{n_{j+1}} \chi_{F_{j+1}}$. Then, $F_{j+1}$ is finite (possibly empty) and disjoint from $F_1, \ldots, F_{j}$. Note that 
\begin{equation*}
    \|b_{j+1}\|_1 = \sum_{k \in F_{j+1}} |a_{n_{j+1}}(k)| \leq \|a_{n_{j+1}}\|_1.
\end{equation*}
Since $F_{j+1}^c = (G_{j+1} \setminus H_{j})^c = G_{j+1}^c \cup H_j$, 
we have from \eqref{eq:H_j} and \eqref{eq:G_j+1} that 
\begin{align*}
    \|a_{n_{j+1}} - b_{j+1}\|_1 = \sum_{k \not\in F_{j+1}} |a_{n_{j+1}}(k)| &\leq \sum_{k \notin G_{j+1}} |a_{n_{j+1}}(k)| + \sum_{k \in H_j}  |a_{n_{j+1}}(k)|     < \frac{1}{2(j+1)}.
\end{align*}
Proceeding inductively, we obtain the required sequences.
\end{proof}

We obtain the following useful consequence, which was suggested to us by Vladimir Kadets.

\begin{corollary}\label{cor:finite-head-norming}
    Let $(a_n)$ be a bounded sequence in $\ell_1$ such that
$\|a_n\|\to 1$. Suppose that there exists a finite set
$F\subset\mathbb N$ such that
\[
a_n(k)\to 0 \quad \text{ whenever } k \not\in F.
\]
Then, there exist a subsequence $(a_{n_j})$ and
$\zeta\in S_{\ell_\infty}$ such that
\[
\zeta(a_{n_j})\to 1.
\]
\end{corollary}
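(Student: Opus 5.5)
We reduce to Lemma~\ref{lemma:disjoint_support_lemma} by splitting each $a_n$ into a finite head supported on $F$ and a tail supported on $\mathbb N\setminus F$. Write $a_n = a_n\chi_F + a_n\chi_{F^c}$. Since $(a_n)$ is bounded and $F$ is finite, the heads $a_n\chi_F$ lie in a bounded subset of the finite-dimensional space $\ell_1(F)$. Passing to a subsequence, we may therefore assume $a_n\chi_F\to h$ in norm for some $h$ supported on $F$. Passing to a further subsequence, we may also assume $\|a_n\chi_{F^c}\|\to t$. Because the supports of head and tail are disjoint,
\[
\|a_n\| = \|a_n\chi_F\| + \|a_n\chi_{F^c}\| \longrightarrow \|h\| + t = 1.
\]

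The tails $a_n\chi_{F^c}$ form a bounded sequence with every coordinate tending to $0$: for $k\notin F$ this is the hypothesis, and for $k\in F$ the coordinate is identically zero. Lemma~\ref{lemma:disjoint_support_lemma} then gives finitely supported vectors $b_j$ with pairwise disjoint supports and an increasing sequence $(n_j)$ such that $\|a_{n_j}\chi_{F^c}-b_j\|\to 0$. We may also take $\supp b_j\subseteq F^c$, since the construction in the lemma sets $b_j = a_{n_j}\chi_{F_j}$, and $\chi_{F_j}$ can be intersected with $F^c$ without spoiling the estimates. Consequently $\|b_j\|\to t$.

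Now define $\zeta\in\ell_\infty$ coordinatewise:
\begin{itemize}
\item $\zeta(k)=\sign h(k)$ for $k\in F$;
\item $\zeta(k)=\sign b_j(k)$ for $k\in\supp b_j$;
\item $\zeta(k)=1$ elsewhere.
\end{itemize}
This is well defined because the supports of the $b_j$ are pairwise disjoint and disjoint from $F$. Every coordinate has modulus at most $1$, and choosing $\sign 0 = 1$ makes $\|\zeta\|_\infty=1$, so $\zeta\in S_{\ell_\infty}$. Pairing with $a_{n_j}$ gives
\[
\zeta(a_{n_j}) = \zeta(a_{n_j}\chi_F) + \zeta(b_j) + \zeta(a_{n_j}\chi_{F^c}-b_j).
\]
We have $\zeta(b_j)=\|b_j\|\to t$, and the last term tends to $0$. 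The first term satisfies $\zeta(a_{n_j}\chi_F)\to\zeta(h)=\|h\|$, since $\zeta$ is bounded and $a_{n_j}\chi_F\to h$. Hence $\zeta(a_{n_j})\to\|h\|+t=1$.

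The only delicate point is ensuring that the disjointification respects $F$, i.e.\ that $\supp b_j\subseteq F^c$. This is harmless because the lemma is applied to $a_n\chi_{F^c}$, which is itself supported in $F^c$. Any vector of the form $a_{n_j}\chi_{F^c}\chi_{F_j}$ is therefore automatically supported off $F$.
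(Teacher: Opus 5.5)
Your proof is correct and follows the paper's argument essentially step for step: you extract a norm-convergent head $a_n\chi_F\to h$ in the finite-dimensional space $\ell_1(F)$, apply Lemma~\ref{lemma:disjoint_support_lemma} to the tails $a_n\chi_{F^c}$, and take $\zeta$ to be the sign vector of $h$ on $F$ and of each $b_j$ on its support. The only difference is your extra subsequence making $\|a_n\chi_{F^c}\|$ converge, which is harmless but unnecessary, since $\|a_n\chi_{F^c}\| = \|a_n\| - \|a_n\chi_F\|\to 1-\|h\|$ automatically.
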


\begin{proof}
    Passing to a subsequence, we may suppose that
$(a_n \restricted_{F})$ converges to some $h \in \ell_1$ in norm. Apply Lemma \ref{lemma:disjoint_support_lemma} to
$(a_n \restricted_{F^c})$ and obtain an increasing sequence $(n_j)$ of natural numbers and pairwise disjointly supported vectors $(b_j)$, supported outside $F$, such that
\[
\| a_{n_j} \restricted_{F^c} -b_j\|\to 0.
\]
Since $\|a_{n_j}\|
 =\| a_{n_j} \restricted_{F} \|+\| a_{n_j} \restricted_{F^c} \|$,
we have
\[
\|b_j\|\to 1-\|h\|.
\]
Define $\zeta\in S_{\ell_\infty}$ such that $\zeta(h)=\|h\|$ and $\zeta(b_j)=\|b_j\|$ for every $j \in \mathbb{N}$. Then
\[
\zeta(a_{n_j})\to \|h\|+(1-\|h\|)=1. \qedhere
\]
\end{proof}

We are now ready to give the first main result of the section.

\begin{theorem} \label{theorem:c0} The following statements hold. \begin{enumerate}[label=(\arabic*)] 
\itemsep0.25em
\item $\mathcal{L}(c_0, c_0)= \NA_2(c_0, c_0)$. 
\item $\mathcal{L}(c, c_0)= \NA_2(c, c_0)$. 
\end{enumerate} 
\end{theorem}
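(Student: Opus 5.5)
Let $T\in\mathcal L(c_0,c_0)$ with $\|T\|=1$, and write $a_n:=T^*e_n^*\in\ell_1$, where $e_n^*$ denotes the $n$-th coordinate functional. Then $(Tx)(n)=a_n(x)$, so $\|T\|=\sup_n\|a_n\|_1=1$. Moreover $T^{**}\colon\ell_\infty\to\ell_\infty$ is given by $(T^{**}\zeta)(n)=\zeta(a_n)$ for $\zeta\in\ell_\infty=\ell_1^*$. Hence $T\in\NA_2(c_0,c_0)$ if and only if there are $\zeta\in S_{\ell_\infty}$ and $n$ with $|\zeta(a_n)|=1$, or a sequence $n_j$ with $|\zeta(a_{n_j})|\to1$; in either case $\|T^{**}\zeta\|_\infty=1$. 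The whole statement thus reduces to producing such a $\zeta$.

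\emph{Case $\|a_n\|=1$ for some $n$.} Take $\zeta=\sign(a_n)$; then $\zeta(a_n)=1$ and we are done.

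\emph{Case $\|a_n\|<1$ for all $n$.} Choose $n_j$ with $\|a_{n_j}\|\to1$. The key input is that $Tx\in c_0$ for every $x\in c_0$, i.e.\ $a_n(x)\to0$ for every $x\in c_0$. In particular $a_n(k)=a_n(e_k)\to0$ for every coordinate $k$. So $(a_{n_j})$ satisfies the hypothesis of Corollary~\ref{cor:finite-head-norming} with $F=\varnothing$ (equivalently, Lemma~\ref{lemma:disjoint_support_lemma} applies directly). Corollary~\ref{cor:finite-head-norming} gives $\zeta\in S_{\ell_\infty}$ and a further subsequence along which $\zeta(a_{n_j})\to1$. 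Therefore $\|T^{**}\zeta\|=1$, which proves (1). The only delicate point is that $\zeta$ is assembled on the disjoint supports of the perturbed vectors $b_j$. The corollary already encodes this, so no further work is needed.

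For (2), identify $c^*\equiv\ell_1$ via $c^*=\ell_1(\{\infty\}\cup\N)$: the coordinate $0$ corresponds to $\lim$, and coordinate $k$ corresponds to $e_k^*$ for $k\ge1$. Under this identification $x\in c$ pairs with $a$ as $a(0)\lim x+\sum_k a(k)x(k)$. Again put $a_n=T^*e_n^*$, so that $(T^{**}\zeta)(n)=\zeta(a_n)$ for $\zeta\in\ell_\infty(\{0\}\cup\N)$. Testing $Tx\in c_0$ against $x=e_k$ gives $a_n(k)\to0$ for each $k\ge1$. The coordinate $0$ need not tend to zero; testing with $x=\mathbf 1$ only gives $a_n(0)+\sum_k a_n(k)\to0$. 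This is exactly the situation of Corollary~\ref{cor:finite-head-norming} with $F=\{0\}$. As before, if some $\|a_n\|=1$ we take a sign vector. Otherwise we pick $\|a_{n_j}\|\to1$ and apply the corollary to obtain $\zeta$ with $\zeta(a_{n_{j_i}})\to1$, hence $\|T^{**}\zeta\|=1=\|T\|$.

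The main obstacle is the correct bookkeeping of $c^*$ and of $T^{**}$ on $\ell_\infty$. One must check that the finite-head corollary is applied to the right coordinates. One must also check that the bidual action is the naive coordinatewise pairing $\zeta(a_n)$, which holds because the target's bidual is $\ell_\infty$ with coordinates $e_n^{**}$ evaluated on $T^*e_n^*$.
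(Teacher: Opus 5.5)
Your proposal is correct and follows the paper's proof essentially verbatim. You set $a_n=T^*e_n^*$, handle separately the case where some $\|a_n\|=1$ (i.e.\ $T^*$ attains its norm), and otherwise apply Corollary~\ref{cor:finite-head-norming} to a subsequence with $\|a_n\|\to1$, taking $F=\varnothing$ for $c_0$ and $F$ equal to the limit coordinate for $c$ (your $\{0\}\cup\N$ indexing is just a relabeling of the paper's $\Phi$ with $F=\{1\}$).
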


\begin{proof}
(1) Let $T \in \mathcal{L}(c_0, c_0)$ and assume without loss of generality that $\|T\| = 1$. Set $a_n:= T^*(e_n)$ for every $n \in \N$, where $(e_n)$ is the canonical basis for $\ell_1$. Note that $a_n(k) \to 0$ as $n \rightarrow \infty$ for every $k \in \N$.

If $T^*$ attains its norm, then we are done. Suppose that $T^*$ does not attain its norm. In this case, we have that $1= \limsup_n \|T^*(e_n)\| = \limsup_n \|a_n\|$. Passing to a subsequence and relabeling if necessary, we may assume that $\|a_n\| \to 1$. 
    By Corollary \ref{cor:finite-head-norming} (with $F=\varnothing$), we obtain an increasing function $\tau\colon \N \rightarrow \N$ and $\zeta \in S_{\ell_\infty}$ such that 
    \[
    \zeta(a_{\tau(n)}) \to 1.
    \]
    Therefore, 
    	\begin{align*}
		\|T^{**} \zeta\| \geq |(T^{**}\zeta)(e_{\tau(n)})| &= |\zeta(T^*(e_{\tau(n)}))| = |\zeta(a_{\tau(n)})| \to 1,
	\end{align*}
which proves that $T^{**}$ attains its norm.

    (2) Let $T \in \mathcal{L}(c,c_0)$ and assume that $\|T\|=1$. Let $\Phi \colon \ell_1 \to c^*$ be the canonical isometry in \eqref{eq:c_dual}. Letting $(e_n)$ and $(e_n^*)$ be the standard coordinate bases for $c_0$ and $\ell_1$ respectively, write $\Phi^{-1} (T^*(e_n^*)) = g_n$ for some $g_n \in \ell_1$ for every $n \in \mathbb{N}$. Note that for $k\geq 2$
\[
g_n (k) = \Phi (g_n) (e_{k-1}) = (T^* (e_n^*)) (e_{k-1}) = e_n^* (T (e_{k-1})) \to 0
\]
as $n\to\infty$.

Note that $\sup_{n} \|g_n\| = \|T\|=1$. If this supremum is attained at some $n$, then $T^*$ attains its norm at $e_n^*$, so $T \in \NA_1 (c, c_0)$. Otherwise, passing to a subsequence, we may assume that $\|g_n\|\to 1$ as $n\to \infty$.
Applying Corollary \ref{cor:finite-head-norming} with $F=\{1\}$, we obtain an increasing function $\tau\colon \mathbb{N}\to\mathbb{N}$ and $\zeta \in S_{\ell_\infty}$ such that 
\[
\zeta(g_{\tau(n)} ) \to 1.
\]
Set $x^{**} := (\Phi^* )^{-1} (\zeta) \in S_{c^{**}}$. Then
\begin{align*}
    \|T^{**}  x^{**}\| \geq \sup_{n \in \mathbb{N}} (T^{**} x^{**}) (e_{\tau(n)}^*) 
    = \sup_{n \in \mathbb{N}} \zeta ( \Phi^{-1} (T^* e_{\tau(n)}^* ) ) \geq \lim_{n\to\infty} \zeta (g_{\tau(n)}) = 1.
\end{align*}
Thus, $T^{**}$ attains its norm at $x^{**}$. 
\end{proof}

\begin{remark}\label{rem:c0_sum_finite_dim}
The proof of Theorem \ref{theorem:c0} extends to certain vector-valued $c_0$-sums. For instance, if $(E_n)_{n \in \mathbb{N}}$ is a sequence of finite-dimensional Banach spaces and
\[
X=
\left(\bigoplus_{n \in \mathbb{N}} E_n\right)_{c_0}
\]
then $\NA_2(X,X)=\mathcal L(X,X)$.
Indeed, the proof follows essentially the same disjoint-support argument used for the pair $(c_0, c_0)$ in Theorem \ref{theorem:c0}. In particular, this shows that the equality $\NA_2(X,X)=\mathcal L(X,X)$ is not restricted to preduals of $\ell_1$: for instance, $X =(\oplus_{n\in\mathbb{N}} \ell_1^n)_{c_0}$ satisfies this equality, while $X$ is not isomorphic to any predual of $\ell_1$. Indeed, $X^*$ contains $\ell_\infty^n$'s uniformly \cite[Corollary 7.7]{TJ} and therefore fails cotype $2$, whereas $\ell_1$ has cotype $2$.  
\end{remark} 

\begin{remark}\label{rem-c0gamma}
The proof of Theorem \ref{theorem:c0} also extends to spaces $c_0(\Gamma)$. Let $\Gamma$ be an arbitrary (infinite) set. Then  $$\mathcal{L}(c_0(\Gamma), c_0(\Gamma))= \NA_2(c_0(\Gamma), c_0(\Gamma)).$$ 
\end{remark}

The proof of Theorem \ref{theorem:c0} suggests that second-adjoint norm attainment could be related to the $\ell_1$-predual structure rather than to the specific geometry of $c_0$. We now investigate to what extent this phenomenon persists within the family of $\ell_1$-preduals $W_\alpha$ considered in \cite{CMP15a}.

\begin{definition}\label{def:Walpha}    
Given $\alpha \in S_{\ell_1}$, the space $W_{\alpha}$ is defined as the hyperplane of $c$ given by $W_\alpha=\ker(\Phi(\alpha)) \subseteq c$, that is,
\begin{equation*}
W_{\alpha} = \{x \in c\colon \Phi(\alpha)(x) = 0\} = \left\{ x \in c\colon \alpha_1 \lim_n x_n + \sum_{k=1}^{\infty} \alpha_{k+1} x_k = 0 \right\},
\end{equation*}
where $\Phi\colon \ell_1 \rightarrow c^*$ is the canonical isometry given by 
	\begin{equation}\label{eq:c_dual}
	\Phi(g)(x) = g_1 \lim_n x_n + \sum_{k=1}^{\infty} g_{k+1} x_k
	\end{equation}
	for every $g=(g_n) \in \ell_1$ and $x=(x_n) \in c$. 
\end{definition}    
The family $\{W_\alpha\colon \alpha\in S_{\ell_1}\}$ provides a natural testing ground for questions concerning norm attainment, as it contains representatives of the isometric classes of $c_0$, $c$, and many other $\ell_1$-preduals.

For the reader's convenience, we recall the classification of the spaces
$W_\alpha$ for which $W_\alpha^*$ is isometric to $\ell_1$.

\begin{remark}[\text{\cite[Proposition 4.1, Remark 4.2]{CMP15a}}] \label{remark-when-W-is-a-predual-of-l1} 
$W_{\alpha}^* \equiv \ell_1$ if and only if $|\alpha_j| \geq 1/2$ for some $j \in \N$. The case $W_{\alpha}^* \equiv \ell_1$ can be divided into three cases.
  \begin{enumerate}[label=(\roman*)]
\itemsep0.25em
	\item $W_{\alpha} \equiv c_0$ if and only if $|\alpha_1| = 1$.	
	\item $W_{\alpha} \equiv c$ if and only if there is $k\geq2$ such that $|\alpha_k| \geq 1/2$.	
	\item $W_{\alpha}$ is isometric to neither $c$ nor $c_0$, that is, when $1/2 \leq |\alpha_1| < 1$ and $|\alpha_j| < 1/2$ for every $j \geq 2$.
\end{enumerate}
\end{remark} 

We will also need the following remarks (see \cite[Remark 5.1]{CMP}).

\begin{remark} \label{remark:permutation} Let $\alpha, \beta \in B_{\ell_1}$. Then the spaces $W_{\alpha}$ and $W_{\beta}$ are isometric if and only if there exist a finite sequence of signs $\{\eps_n\}_{n=1}^{j_0}$ with $\eps_n = \pm 1$, and a permutation $\pi\colon \N \rightarrow \N$ such that $\alpha(n) = \eps_n \beta( \pi(n))$ for $1 \leq n \leq j_0$ and $\alpha(n) = \beta(\pi(n))$ for $n > j_0$.
\end{remark}

\begin{remark}\label{remark:onto} 
Let $\alpha \in S_{\ell_1}$ be given. Suppose that $W_\alpha^*$ is isometric to $\ell_1$, and that $1/2\leq |\alpha_1| < 1$ and $|\alpha_j|<1/2$ for every $j\geq 2$. In this case, the canonical map 
\begin{equation}\label{eq:W_alpha_dual}
	(\phi(y))(x) = \sum_{j=1}^{\infty} x_j y_j, 
	\end{equation}
    where $y = (y_n) \in \ell_1$ and $x = (x_n) \in W_\alpha$, is a surjective linear isometry; see the proof of \cite[Theorem 4.3]{CMP15a}.
\end{remark} 

The following lemma will be useful in the proof of Theorem \ref{full-characterization_target_c0}.

\begin{lemma}\label{lem:weak-star-limit-infinite-support}
    Let $X$ be a Banach space and let $\varphi \colon \ell_1 \to X^*$ be a surjective isometry. Let $(e_n)$ be the canonical basis of $\ell_1$. Suppose that, under this identification, 
    \[
    (e_n) \xrightarrow{\sigma(\ell_1, X)} \eta 
    \]
    for some infinitely supported $\eta \in \ell_1$ satisfying $|\eta_n| <1$ for every $n \in \mathbb{N}$. Then $\NA_2 (X, c_0) \neq \mathcal L (X,c_0)$. 
\end{lemma}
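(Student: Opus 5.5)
We construct an explicit operator $T\colon X\to c_0$ from the weakly$^*$ null sequence $(e_n-\eta)$ in $X^*\equiv\ell_1$, and show that no $\zeta\in S_{\ell_\infty}\equiv S_{X^{**}}$ norms $T^{**}$. I have only carried this out when $\eta_k>0$ for infinitely many $k$ (after possibly replacing $\eta$ by $-\eta$); the remaining sign configuration is an open gap, flagged in the last paragraph.

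\textbf{Construction.} Identify $X^*$ with $\ell_1$ via $\varphi$, and hence $X^{**}$ with $\ell_\infty$. For $x\in X$, the value $\langle\varphi(e_n)-\varphi(\eta),x\rangle$ tends to $0$ by the hypothesis. So for any sequence of indices $(k_j)$ and any bounded scalars $c_j$, the formula
\[
(Tx)(j):=c_j\,\langle \varphi(e_{k_j}-\eta),x\rangle
\]
defines $T\in\mathcal L(X,c_0)$ with $T^*e_j^*=\varphi(a_j)$, where $a_j=c_j(e_{k_j}-\eta)$. As in the proof of Proposition~\ref{prop:Schur_equivalence}, for $\zeta\in X^{**}\equiv\ell_\infty$ we have $\|T^{**}\zeta\|=\sup_j|\zeta(a_j)|$ and $\|T\|=\sup_j\|a_j\|$. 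So $T\in\NA_2(X,c_0)$ if and only if some $\zeta\in S_{\ell_\infty}$ satisfies $\sup_j|\zeta(a_j)|=\sup_j\|a_j\|$.

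\textbf{Norms.} For every $k$,
\[
\|e_k-\eta\|=|1-\eta_k|+\|\eta\|-|\eta_k|,
\]
which equals $1+\|\eta\|-2\eta_k$ if $\eta_k>0$ and equals $1+\|\eta\|$ if $\eta_k\le 0$. Here $0<\|\eta\|\le1$, since $\eta$ is a weak$^*$ limit of unit vectors and $\eta\ne0$.

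\textbf{The case with infinitely many positive coordinates.} Let $(k_j)$ enumerate $\{k\colon\eta_k>0\}$ and take $c_j:=1/(1+\|\eta\|)$. Then $\|a_j\|<1$ for every $j$, and $\|a_j\|\to1$ because $\eta_{k_j}\to0$ (as $\eta\in\ell_1$), so $\|T\|=1$. Suppose $\zeta\in S_{\ell_\infty}$ satisfies $\sup_j|\zeta(a_j)|=1$. Each term is strictly below $1$, so the supremum is a limit along a subsequence: $|\zeta_{k_j}-\zeta(\eta)|\to1+\|\eta\|$. Since $|\zeta_{k_j}|\le1$ and $|\zeta(\eta)|\le\|\eta\|$, this forces $|\zeta(\eta)|=\|\eta\|$. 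Because $\eta$ is infinitely supported and every $|\eta_n|<1$, equality in $|\zeta(\eta)|\le\|\eta\|$ forces $\zeta_k=s\,\sign(\eta_k)$ on $\supp\eta$ for a single sign $s$. In particular $\zeta_{k_j}=s$ for all $j$, so
\[
|\zeta_{k_j}-\zeta(\eta)|=1-\|\eta\|<1+\|\eta\|,
\]
a contradiction. Hence $T\notin\NA_2(X,c_0)$.

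\textbf{The main obstacle.} The difficulty is the case where only finitely many $\eta_k$ are positive and finitely many are negative, which cannot be reduced to the case above by the sign flip $\eta\mapsto-\eta$. If $\eta_k<0$ for all large $k$, indexing over negative coordinates fails: then $\|e_k-\eta\|=1+\|\eta\|$ exactly, and $\zeta=-(\sign\eta_k)_k$ norms $T^{**}$. I have not found a construction for this case. The first thing I would try is to modify the test vectors so that they become non-extremal exactly where $\zeta$ is forced by $\eta$, for example by taking combinations
\[
a_j=c_j\bigl(e_{k_j}-\eta\bigr)+d_j\bigl(e_{m_j}-\eta\bigr)
\]
with $\eta_{k_j}<0$ and a suitable choice of $m_j$. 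This keeps $(a_j)$ weakly$^*$ null, and the aim is that $\ell_1$-norm equality again forces incompatible sign requirements on $\zeta$, using infinite support and $|\eta_n|<1$. If that fails, I would fall back on a case analysis of $\sign\eta$ on $\supp\eta$.
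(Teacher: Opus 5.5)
Your argument is correct when $\eta_k>0$ for infinitely many $k$; it is a simpler one-index variant of the paper's $\theta=1$ construction. The remaining case, where only finitely many coordinates of $\eta$ are positive and infinitely many are negative, is a genuine gap, and it is not marginal.

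It cannot be normalized away. The surjective isometries of $\ell_1$ are signed permutations. Re-identifying $X^*$ with $\ell_1$ so that $(e_n)$ stays weak$^*$ convergent only permutes $\eta$ and flips finitely many of its signs.

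It is also exactly the case the paper needs. In Theorem~\ref{full-characterization_target_c0}(2) one has $\eta_\alpha=(-\alpha_{j+1}/\alpha_1)_j$ with $\alpha_1>0$. So for $\alpha=(2^{-n})$ in Example~\ref{exa:the_converse_of_Schur_result}, every coordinate of $\eta_\alpha$ is negative.

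Your one-index vectors fail there for a structural reason. The $k$-th coordinate $1-\eta_k$ of $e_k-\eta$ has sign opposite to that of $-\eta_k$ only when $\eta_k>0$. When $\eta_k<0$, any $\zeta\in S_{\ell_\infty}$ with $\zeta=-\sign(\eta)$ on $\supp\eta$ satisfies $\zeta(e_k-\eta)=1+\|\eta\|=\|e_k-\eta\|$ for all such $k$ at once. Hence $T^{**}$ attains its norm whatever scalars $c_j$ you choose.

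The missing idea is the one you guess at the end: two indices carrying unequal coefficients of opposite sign. Enumerate $\{k\colon \eta_k<0\}$ as $k_1<k_2<\cdots$ and put
\[
a_n:=2(e_{k_n}-\eta)-(e_{k_{n+1}}-\eta)=-\eta+2e_{k_n}-e_{k_{n+1}}.
\]
This sequence is still weak$^*$ null. Its $k_{n+1}$-th coordinate is $-1-\eta_{k_{n+1}}=-(1-|\eta_{k_{n+1}}|)$, whose sign is opposite to that of $-\eta$ there. This is the conflict a single index cannot produce.

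Using $|\eta_j|<1$ one computes $\|a_n\|=\|\eta\|+3-2|\eta_{k_{n+1}}|$. So $\|T\|=\|\eta\|+3$, and this value is approached but never attained by the $\|a_n\|$.

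Now suppose $\sup_n|\zeta(a_n)|=\|\eta\|+3$ for some $\zeta\in B_{\ell_\infty}$. Since $|\zeta(a_n)|\le|\zeta(\eta)|+3$, this forces $|\zeta(\eta)|=\|\eta\|$. Hence $\zeta=\varepsilon\,\sign(\eta)$ on $\supp\eta$, so $\zeta_{k_n}=\zeta_{k_{n+1}}=-\varepsilon$. Then $\zeta(a_n)=-\varepsilon\|\eta\|-2\varepsilon+\varepsilon=-\varepsilon(\|\eta\|+1)$, so $|\zeta(a_n)|=\|\eta\|+1$ for every $n$, a contradiction.

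This is the paper's proof. It handles both sign cases uniformly with $a_n=\theta\eta+p_\theta e_{\tau(n)}-q_\theta e_{\tau(n+1)}$, where $(p_\theta,q_\theta)=(1,2)$ for $\theta=1$ and $(2,1)$ for $\theta=-1$.
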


\begin{proof}
    Since $\eta$ is infinitely supported, one of the sets $\{j \in \mathbb{N}\colon \eta_j >0\}$ and $\{ j \in \mathbb{N} \colon \eta_j <0\}$ is infinite. Thus, there exist $\theta \in \{-1,1\}$ and an increasing function $\tau\colon \mathbb{N}\to\mathbb{N}$ such that $\theta \eta_{\tau(n)}>0$ for every $n \in \mathbb{N}$. That is, $\theta = \sign(\eta_{\tau(n)})$ for every $n \in\mathbb{N}$.

    Consider 
    \[
    p_\theta := \frac{3-\theta}{2} \quad \text{ and } \quad  q_\theta = \frac{3+\theta}{2}.
    \]
    Define 
    \[
    a_n := \theta \eta + p_\theta e_{\tau(n)} - q_\theta e_{\tau(n+1)} \in \ell_1.
    \]
    Note that $p_\theta-q_\theta=-\theta$ and $(e_{\tau(n)}) \xrightarrow{\sigma(\ell_1,X)} \eta$. It follows that 
    \[
    a_n \xrightarrow{\sigma(\ell_1,X)} \theta\eta + (p_\theta-q_\theta) \eta =0.
    \]
    Therefore, the formula 
    \[
    Tx := (\varphi(a_n)(x))_{n\in\mathbb N}
    \]
    defines a well-defined bounded linear operator from $X$ to $c_0$.

    Note that $T^*e_n^*= \varphi (a_n)$ for every $n \in \mathbb{N}$. Moreover, 
    \begin{align*}
        \|a_n\| &= (\|\eta\| - |\eta_{\tau(n)}| - |\eta_{\tau(n+1)}|) + (|\eta_{\tau(n)}| + p_{\theta}) + (q_\theta -|\eta_{\tau(n+1)} |)\\
        &=\|\eta\| +3 - 2|\eta_{\tau(n+1)}| < \|\eta\| + 3 
    \end{align*}
    for every $n \in \mathbb{N}$. This proves that $\|T\| = \sup_{n\in\mathbb{N}} \|T^*e_n^* \| = \|\eta\| +3$.

    Assume to the contrary that $T \in \NA_2 (X, c_0)$. Identifying $X^{**}$ with $\ell_\infty$ through $\varphi^{*}$, we see that there exists $\zeta \in B_{\ell_\infty}$ such that $\sup_{n\in\mathbb{N}} |\zeta(a_n) | = \|\eta\| +3$. Since 
    \[
    |\zeta(a_n)| \leq |\zeta(\eta)| + p_\theta + q_\theta \leq \|\eta\| +3,
    \]
    it follows that $|\zeta(\eta)|=\|\eta\|$. This implies that there exists $\epsilon \in \{-1,1\}$ such that $\zeta_j = \epsilon \sign (\eta _j) $ for every $j \in \mathbb{N}$ with $\eta_j \neq 0$. In particular, $\zeta(\eta)= \epsilon \|\eta\|$, and 
    \[
    \zeta_{\tau(n)} = \zeta_{\tau(n+1)} = \epsilon \sign (\eta_{\tau(n)}) = \epsilon \theta \quad (n\in\mathbb{N}).
    \]
    Therefore, for every $n \in \mathbb{N}$, 
    \begin{align*}
        \zeta(a_n) &= \theta \zeta(\eta) + p_{\theta} \zeta(e_{\tau(n)}) - q_\theta \zeta(e_{\tau(n+1)} )\\
        &=\theta \epsilon \|\eta\| + \epsilon \theta (p_{\theta}-q_\theta) = \theta \epsilon (\|\eta\| -\theta ).
    \end{align*}
    This implies that $\sup_{n\in\mathbb{N}}|\zeta(a_n)| \leq \|\eta\|+1$, which is a contradiction.
\end{proof}

We can now state and prove the first main result on $W_\alpha$-spaces.

\begin{theorem} \label{full-characterization_target_c0} 
Let $\alpha \in S_{\ell_1}$ be given. Suppose that $W_\alpha^* \equiv \ell_1$ and that $W_{\alpha}$ is isometric neither to $c$ nor $c_0$. 
	\begin{enumerate}[label=(\arabic*)]
     \itemsep0.25em
		 \item If $\alpha$ is finitely supported, then $\NA_2(W_{\alpha}, c_0) = \mathcal{L}(W_{\alpha}, c_0)$.  
		\item If $\alpha$ is infinitely supported, then $\NA_2(W_{\alpha}, c_0) \not= \mathcal{L}(W_{\alpha}, c_0)$. 
	\end{enumerate}
\end{theorem}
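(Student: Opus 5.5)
The plan is to work throughout with the identification $W_\alpha^*\equiv\ell_1$ given by the canonical map $\phi$ of Remark~\ref{remark:onto}, i.e.\ $(\phi(y))(x)=\sum_j x_jy_j$ for $x\in W_\alpha$. This applies because, by Remark~\ref{remark-when-W-is-a-predual-of-l1}, our hypotheses mean exactly that $1/2\leq|\alpha_1|<1$ and $|\alpha_j|<1/2$ for every $j\geq2$. Both parts rest on one computation: the $\sigma(\ell_1,W_\alpha)$-limit of the canonical basis $(e_n)$ of $\ell_1$, regarded as the coordinate functionals $x\mapsto x_n$ on $W_\alpha$.

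For $x\in W_\alpha$ we have $x_n\to\lim_n x_n$. The defining relation of $W_\alpha$ gives $\lim_n x_n=-\frac{1}{\alpha_1}\sum_{k}\alpha_{k+1}x_k$. Hence $e_n\to\eta$ weak$^*$, where
\[
\eta_k:=-\frac{\alpha_{k+1}}{\alpha_1}\qquad(k\in\mathbb N).
\]
This $\eta$ lies in $\ell_1$, and it satisfies $|\eta_k|=|\alpha_{k+1}|/|\alpha_1|<\frac{1/2}{1/2}=1$ for every $k$. Moreover, $\supp\eta$ is infinite exactly when $\alpha$ is infinitely supported.

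Part (2) then follows at once from Lemma~\ref{lem:weak-star-limit-infinite-support} applied with $\varphi=\phi$. I expect the only care needed is to check that the weak$^*$ convergence $e_n\to\eta$ is genuinely convergence in $\sigma(\ell_1,W_\alpha)$ under $\phi$. This is exactly the computation above, so I do not anticipate difficulty there.

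For part (1), let $T\in\mathcal L(W_\alpha,c_0)$ with $\|T\|=1$, and put $a_n:=\phi^{-1}(T^*e_n^*)\in\ell_1$. Since $Tx\in c_0$, we have $\sum_j x_ja_n(j)=(Tx)_n\to0$ for every $x\in W_\alpha$. Let $F:=\{k\colon \alpha_{k+1}\neq0\}$, which is finite. For $k\notin F$, the unit vector $e_k\in c_0\subseteq c$ has limit $0$ and satisfies $\Phi(\alpha)(e_k)=\alpha_{k+1}=0$, so $e_k\in W_\alpha$. Testing against it gives $a_n(k)=(Te_k)_n\to0$ for every $k\notin F$.

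The rest follows the proof of Theorem~\ref{theorem:c0}(2). We have $\sup_n\|a_n\|=\|T\|=1$; if this supremum is attained, then $T^*$ attains its norm at some $e_n^*$ and we are done. Otherwise, pass to a subsequence with $\|a_n\|\to1$. Corollary~\ref{cor:finite-head-norming} then produces $\tau$ and $\zeta\in S_{\ell_\infty}$ with $\zeta(a_{\tau(n)})\to1$. Set $x^{**}:=(\phi^*)^{-1}(\zeta)\in S_{W_\alpha^{**}}$. Then
\[
\|T^{**}x^{**}\|\geq\sup_n\bigl|\zeta(a_{\tau(n)})\bigr|\geq1,
\]
so $T\in\NA_2(W_\alpha,c_0)$.

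The main point to get right is the observation that coordinatewise convergence to zero can fail only on the finite set $F$. This is precisely where finite support of $\alpha$ enters. It is also what makes Corollary~\ref{cor:finite-head-norming}, with its finite head $F$, applicable.
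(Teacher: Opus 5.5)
Your proposal is correct and follows essentially the same route as the paper. Part (2) identifies the weak$^*$ limit $\eta$ of $(e_n)$ and applies Lemma~\ref{lem:weak-star-limit-infinite-support}, and part (1) applies Corollary~\ref{cor:finite-head-norming} with a finite head; the only cosmetic differences are your choice $F=\{k\colon \alpha_{k+1}\neq 0\}$ in place of $\{1,\ldots,N-1\}$ and your omission of the harmless normalization $\alpha_1>0$.
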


\begin{proof} 
(1) Suppose that $\alpha = (\alpha_1, \ldots, \alpha_N, 0, 0, \ldots) \in S_{\ell_1}$ for some $N \in \mathbb{N}$. Recall from Remark \ref{remark:onto} that the duality between $\ell_1$ and $W_{\alpha}$ is given by the isometry $\phi\colon \ell_1 \rightarrow W_{\alpha}^*$ defined by 
	\begin{equation*}
	\phi(y)(x) = \sum_{n=1}^{\infty} y_n x_n 
	\end{equation*}
	for every $y = (y_n) \in \ell_1$ and $x = (x_n) \in W_{\alpha}$. Let $T\colon W_{\alpha} \rightarrow c_0$ be given with $\|T\|=1$. Then $\sup_n \|T^* (e_n^*) \| = 1$. If this supremum is attained for some $n \in\mathbb{N}$, then $T \in \NA_1 (W_\alpha, c_0)$. Suppose that the supremum is not attained. By passing to a subsequence, we may assume that $\|T^*e_n^*\| \rightarrow 1$ as $n \rightarrow \infty$. 
    
    For simplicity, put $a_n := \phi^{-1}(T^* e_n^*)$ for every $n \in \mathbb{N}$. Note that $e_j \in W_\alpha$ for every $j \geq N$. Thus, for $j \geq N$, 
    \[
    a_n (j) = \phi(a_n) (e_j) =( T^* e_n^*)( e_j) = e_n^* (Te_j) \to 0
    \]
    as $n\to\infty$. 
    
    Applying Corollary \ref{cor:finite-head-norming} with $F=\{1,\ldots,N-1\}$, we obtain a subsequence $(a_{n_j})$ and $\zeta \in S_{\ell_\infty}$ such that 
    \[
    \zeta(a_{n_j}) \to 1.
    \]
    Set $x^{**} = (\phi^{-1})^*\zeta\in S_{W_\alpha^{**}}$. Then 
\[
\|T^{**} x^{**}\| \geq \lim_{j\to\infty} |(T^{**} x^{**} ) (e_{n_j}^*)| = \lim_{j\to\infty} |\zeta ( \phi^{-1} (T^* e_{n_j}^*) ) | = \lim_{j\to\infty} |\zeta(a_{n_j} )| =1.
\]
Therefore, $T^{**}$ attains its norm. 
    
(2) Suppose that $\alpha$ is infinitely supported. Without loss of generality, assume that $\alpha_1 > 0$ (see \cite[Remark 5.1]{CMP} or Remark \ref{remark:permutation}). Define 
\[
\eta_\alpha:= \left( -\frac{\alpha_2}{\alpha_1}, - \frac{\alpha_3}{\alpha_1}, - \frac{\alpha_4}{\alpha_1}, \ldots \right) \in \ell_1.
\]
Then for every $x=(x_j) \in W_\alpha$, we have 
\[
\phi(e_n)(x) = x_n \to \lim_{j\to\infty} x_j = -\frac{1}{\alpha_1} \sum_{j=1}^\infty \alpha_{j+1} x_j = \phi(\eta_\alpha)(x).
\]
This shows that $(e_n) \xrightarrow{\sigma(\ell_1,W_\alpha)} \eta_\alpha$. Since $\eta_\alpha$ is infinitely supported, and $|\eta_\alpha(j)|<1$ for every $j \in \mathbb{N}$, we apply Lemma \ref{lem:weak-star-limit-infinite-support} to conclude that 
\[
\NA_2 (W_\alpha,c_0) \neq \mathcal{L}(W_\alpha, c_0). \qedhere
\]
\end{proof}

As a consequence, we may show that the converse of Corollary \ref{cor:Schur} fails. 

\begin{example} \label{exa:the_converse_of_Schur_result}
There exists a Banach space $X$ such that $\NA_2(X,c_0)\neq \mathcal L(X,c_0)$ and $X^*$ has the Schur property. Indeed, choose $\alpha= (2^{-n}) \in S_{\ell_1}$ and take $X = W_\alpha$. By Theorem \ref{full-characterization_target_c0}, $\NA_2 (X,c_0) \neq \mathcal{L}(X,c_0)$ while $X^*$ is isometric to $\ell_1$.
\end{example}

\subsection{The target space \texorpdfstring{$c$}{c}}\label{subsec:c_target}

The disjoint-support argument used in the proof of Theorem \ref{theorem:c0}
relies heavily on the particular dual structure of $c_0$. It is therefore natural to ask whether the same phenomenon persists when the target space is replaced by $c$, another classical $\ell_1$-predual. In this subsection, we show that the answer is negative.

The following lemmas will be useful in the sequel.

\begin{lemma}\label{lem:isometric-conjugacy}
Let $X$ and $Y$ be Banach spaces, and let
\[
U\colon \ell_1\to X^*,
\qquad
V\colon \ell_1\to Y^*
\]
be surjective linear isometries. Given $T\in\mathcal L(X,Y)$, put $S:=U^{-1}T^*V\in\mathcal L(\ell_1,\ell_1)$.
    \begin{equation*}
    \begin{tikzcd}
	{Y^* } && {X^*} \\
	{\ell_1} && {\ell_1}
	\arrow["{T^*}", from=1-1, to=1-3]
	\arrow["{U^{-1}}", from=1-3, to=2-3]
	\arrow["V", from=2-1, to=1-1]
	\arrow["S"', from=2-1, to=2-3]
\end{tikzcd}    
    \end{equation*}
Then $\|S\|=\|T\|$, and $T \in \NA_2 (X,Y)$ if and only if $S^* \in \NA (\ell_\infty,\ell_\infty)$. 
\end{lemma}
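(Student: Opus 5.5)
The plan is to express $T^{**}$ in terms of $S^*$ by taking adjoints of the defining identity. From $S = U^{-1} T^* V$ we get $T^* = U S V^{-1}$, and taking adjoints gives
\[
T^{**} = (V^{-1})^* S^* U^* = (V^*)^{-1} S^* U^*,
\]
where $U^*\colon X^{**}\to \ell_\infty$ and $V^*\colon Y^{**}\to\ell_\infty$ are the adjoints of $U$ and $V$. Since $U$ and $V$ are surjective linear isometries, their adjoints are surjective linear isometries too, with $(U^*)^{-1} = (U^{-1})^*$ and $(V^*)^{-1}=(V^{-1})^*$. Here $\ell_1^*$ is identified with $\ell_\infty$ as usual.

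The norm equality is then immediate. Composition with surjective isometries preserves operator norms, so $\|S\| = \|T^*\| = \|T\|$. Equivalently, $\|S^*\| = \|T^{**}\| = \|T\|$.

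For the equivalence, suppose first that $T^{**}$ attains its norm at $x^{**}\in S_{X^{**}}$. Put $\zeta := U^* x^{**}\in S_{\ell_\infty}$. Then
\[
\|S^*\zeta\| = \|V^* T^{**} (U^*)^{-1} \zeta\| = \|T^{**} x^{**}\| = \|T\| = \|S^*\|,
\]
using the relation $S^* = V^* T^{**} (U^*)^{-1}$, which follows from the displayed identity. So $S^*\in \NA(\ell_\infty,\ell_\infty)$.

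Conversely, if $S^*$ attains its norm at $\zeta\in S_{\ell_\infty}$, put $x^{**} := (U^*)^{-1}\zeta \in S_{X^{**}}$. Then $\|T^{**}x^{**}\| = \|(V^*)^{-1} S^* \zeta\| = \|S^*\zeta\| = \|T\|$, so $T\in \NA_2(X,Y)$.

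No step is a genuine obstacle. The only point needing care is bookkeeping: that adjoints of surjective isometries are surjective isometries, and that the order of composition reverses correctly under taking adjoints.
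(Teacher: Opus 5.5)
Your proposal is correct and is essentially the paper's argument. The paper also writes $S^*=V^*T^{**}(U^{-1})^*$ and observes that composing with the surjective isometries $V^*$ and $(U^{-1})^*$ carries norm attainment back and forth between $T^{**}$ and $S^*$; you simply spell out the two directions and the norm equality explicitly.
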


\begin{proof}
We have $S^*=V^*T^{**}(U^{-1})^*$. Since $V^*$ and $(U^{-1})^*$ are surjective linear isometries, norm
attainment of $S^*$ is equivalent to norm attainment of $T^{**}$.
\end{proof}

\begin{lemma}\label{lem:moving-bump}
Let
\[
u:=\left(\frac1{2^n}\right)_{n=1}^\infty\in S_{\ell_1},
\qquad
b_n:=\frac12(e_{n+1}-e_{n+2})\in S_{\ell_1},
\]
where $(e_n)$ denotes the standard coordinate basis for $\ell_1$. 
Let $0<\kappa<1$, and let $S\in\mathcal L(\ell_1,\ell_1)$.
Suppose that there exist a set $A\subset\mathbb N$, a mapping
$\nu\colon A\to\mathbb N$, and scalars $(\lambda_j)_{j\in A}$ such that

\begin{enumerate}
\itemsep0.25em
\item  $Se_j=\kappa u+(1-\kappa)\lambda_j b_{\nu(j)}$ for every $j \in A$, and $|\lambda_j|\leq 1$ for every $j \in A$;

\item there exist $(j_n)\subset A$ and a sequence $(s_n)$ tending to
infinity such that
\[
Se_{j_n}=\kappa u+(1-\kappa)b_{s_n};
\]

\item $\sup_{j\notin A}\|Se_j\|<1$.
\end{enumerate}

Then $\|S\|=1$ and $S^*$ does not attain its norm.
\end{lemma}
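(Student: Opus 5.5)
The norm of an operator on $\ell_1$ is the supremum of the norms of its columns, so $\|S\|=\sup_j\|Se_j\|$. The equality $\|S\|=1$ will follow from computing the columns over $A$ and using hypothesis (3) off $A$. For $S^*$ we use the identification $\|S^*\zeta\|=\sup_j|\zeta(Se_j)|$ for $\zeta\in\ell_\infty$. Non-attainment then means: no $\zeta\in B_{\ell_\infty}$ satisfies $\sup_j|\zeta(Se_j)|=1$.

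\emph{Norm computation.} The vector $u$ is supported on all of $\mathbb N$ with $u(n)=2^{-n}$. The vector $b_m$ is supported on $\{m+1,m+2\}$ with entries $\pm\frac12$. For $j\in A$ with $\lambda_j\neq 0$ we have
\[
\|Se_j\|=\kappa\bigl(1-2^{-(\nu+1)}-2^{-(\nu+2)}\bigr)+\bigl|\kappa 2^{-(\nu+1)}+\tfrac{(1-\kappa)\lambda_j}{2}\bigr|+\bigl|\kappa 2^{-(\nu+2)}-\tfrac{(1-\kappa)\lambda_j}{2}\bigr|,
\]
where $\nu=\nu(j)$. This is at most $\kappa+(1-\kappa)|\lambda_j|\le 1$ by the triangle inequality, and the same bound holds when $\lambda_j=0$. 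For $j=j_n$ we have $\lambda=1$ and $s_n\to\infty$, so the terms $\kappa 2^{-(s_n+1)}$ are negligible. Hence $\|Se_{j_n}\|\ge \kappa(1-\varepsilon_n)+(1-\kappa)-O(2^{-s_n})\to1$. Combined with (3), this gives $\|S\|=1$.

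\emph{Non-attainment.} Suppose $\zeta\in B_{\ell_\infty}$ and $\sup_j|\zeta(Se_j)|=1$. By (3), some sequence of indices in $A$ must realise values tending to $1$. For $j\in A$,
\[
|\zeta(Se_j)|\le\kappa|\zeta(u)|+(1-\kappa)\,|\zeta(b_{\nu(j)})|\le \kappa|\zeta(u)|+(1-\kappa).
\]
So we need $|\zeta(u)|=1$. Since $u$ has full support with positive entries, this forces $\zeta=\epsilon(1,1,\dots)$ with $\epsilon=\pm1$. But then $\zeta(b_m)=\frac{\epsilon}{2}(1-1)=0$ for every $m$. Hence $|\zeta(Se_j)|=\kappa<1$ for all $j\in A$.

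This contradicts the requirement that the supremum equal $1$. The only remaining concern is the sup over $j\notin A$, which is bounded by $\sup_{j\notin A}\|Se_j\|<1$ by (3). So the supremum is at most $\max(\kappa,\sup_{j\notin A}\|Se_j\|)<1$, and $S^*$ does not attain its norm.

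The main delicate step is the limit $\|Se_{j_n}\|\to1$, since $u$ and $b_{s_n}$ overlap. It is handled by the decay of $u$ on the moving support $\{s_n+1,s_n+2\}$.
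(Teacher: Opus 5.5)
Your proof is correct and follows the paper's argument: $\|S\|=\sup_j\|Se_j\|$ together with $\|\kappa u+(1-\kappa)b_{s_n}\|\to 1$ gives $\|S\|=1$, and a norming $\zeta$ would have to satisfy $|\zeta(u)|=1$, forcing $\zeta=\pm(1,1,\dots)$ and hence $\zeta(b_m)=0$ for all $m$, which caps $\|S^*\zeta\|$ at $\max\{\kappa,\sup_{j\notin A}\|Se_j\|\}<1$. Your explicit coordinate computation of $\|Se_{j_n}\|$ only spells out what the paper states in one line.
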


\begin{proof}
The assumptions give $\|Se_j\|\le1$ for every $j \in \mathbb{N}$. Moreover, observe that 
\[
\|Se_{j_n} \| = \|\kappa u+(1-\kappa)b_{s_n}\| \to \kappa\|u\|+(1-\kappa)\|b_{s_n}\|=1.
\]
Hence $\|S\|=1$.

Suppose that $\|S^*\zeta\|=1$ for some $\zeta=(\zeta_n)\in S_{\ell_\infty}$. By (3), we have that 
\[
\begin{aligned}
1
=\sup_{j\in A}
 \left|
 \kappa \zeta(u)+(1-\kappa)\lambda_j \zeta(b_{\nu(j)})
 \right| &\leq
\kappa|\zeta(u)|+(1-\kappa)\sup_n|\zeta(b_n)|
\le1.
\end{aligned}
\]
It follows that $|\zeta(u)|=1$. Therefore, either $\zeta_n=1$ for every $n \in \mathbb{N}$, or $\zeta_n=-1$ for every $n \in \mathbb{N}$.
Consequently, $\zeta(b_n)=0$ for every $n \in \mathbb{N}$. The preceding supremum is therefore equal to $\kappa<1$, which is a
contradiction.
\end{proof}

Here is the second main result concerning $W_\alpha$-spaces.

\begin{theorem}\label{thm:target_c}
    Let $\alpha \in S_{\ell_1}$ be such that $W_\alpha^* \equiv \ell_1$. Then $\NA_2 (W_\alpha, c) \neq \mathcal{L}(W_\alpha, c)$.
\end{theorem}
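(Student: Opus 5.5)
The plan is to reduce to Lemma~\ref{lem:moving-bump} through Lemma~\ref{lem:isometric-conjugacy}. We take $V:=\Phi\colon\ell_1\to c^*$ from \eqref{eq:c_dual} and some surjective isometry $U\colon\ell_1\to W_\alpha^*$, chosen suitably below. We then build an operator $S\in\mathcal L(\ell_1,\ell_1)$ that satisfies the hypotheses of Lemma~\ref{lem:moving-bump} and is of the form $U^{-1}T^*\Phi$ for some $T\in\mathcal L(W_\alpha,c)$. Lemma~\ref{lem:moving-bump} then says $S^*$ does not attain its norm, and Lemma~\ref{lem:isometric-conjugacy} gives $T\notin\NA_2(W_\alpha,c)$.

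\emph{Step 1: when $S$ comes from some $T$.} Write $\delta_n:=\Phi(e_{n+1})$ for evaluation at the $n$-th coordinate and $\Phi(e_1)=\lim$. Suppose $S\in\mathcal L(\ell_1,\ell_1)$ satisfies
\[
U(Se_{n+1})\xrightarrow{w^*}U(Se_1)\quad\text{in }W_\alpha^*,
\]
that is, $Se_{n+1}\to Se_1$ in $\sigma(\ell_1,W_\alpha)$ under the identification $U$. Define
\[
(Tx)_n:=U(Se_{n+1})(x)\qquad(x\in W_\alpha,\ n\in\mathbb N).
\]
Then $Tx$ is a bounded convergent sequence with $\lim_n (Tx)_n=U(Se_1)(x)$, so $T\in\mathcal L(W_\alpha,c)$. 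Moreover $T^*\Phi(e_k)=U(Se_k)$ for every $k$, hence $T^*\Phi=US$ by linearity and continuity. So the only compatibility condition to check is this weak$^*$ convergence.

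\emph{Step 2: choosing $U$.} Fix any surjective isometry $U_0\colon\ell_1\to W_\alpha^*$. The space $W_\alpha$ is separable, so $B_{W_\alpha^*}$ is weak$^*$ sequentially compact. Hence some subsequence $(U_0e_{m_k})$ is weak$^*$ convergent. Let $P$ be an isometry of $\ell_1$ induced by a permutation of $\mathbb N$ with $Pe_{n+1}=e_{m_n}$ for all $n$; the complement of $\{m_n\}$ may be infinite, so we also need to decide where $e_1$ and the remaining basis vectors go, and we take a bijection accordingly. Set $U:=U_0P$. Then $(Ue_{n})_{n\geq 2}$ is weak$^*$ convergent to some $U\eta$, and consequently
\[
Ub_n=\tfrac12\,U(e_{n+1}-e_{n+2})\xrightarrow{w^*}0 .
\]

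\emph{Step 3: the operator $S$.} Fix $0<\kappa<1$ and $u$, $b_n$ as in Lemma~\ref{lem:moving-bump}. Define
\[
Se_1:=\kappa u,\qquad Se_{n+1}:=\kappa u+(1-\kappa)b_n\quad(n\ge1).
\]
All these vectors have norm at most $1$, so $S$ extends to a bounded operator on $\ell_1$. By Step 2, $Se_{n+1}\to Se_1$ weak$^*$ through $U$, so Step 1 produces $T$ with $S=U^{-1}T^*\Phi$. The hypotheses of Lemma~\ref{lem:moving-bump} hold with $A=\mathbb N$, $\lambda_1=0$, $\lambda_{n+1}=1$, $\nu(n+1)=n$ (and $\nu(1)$ arbitrary), $j_n=n+1$ and $s_n=n$; condition (3) is vacuous.

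The main obstacle is Step 2: the weak$^*$ limit $Se_1$ must be of the admissible form $\kappa u+(1-\kappa)\lambda b_\nu$. The relabelling trick handles this uniformly in $\alpha$, because the differences of consecutive basis vectors then tend to $0$ weak$^*$, whatever the limit $\eta$ is. This is also where the hypothesis $W_\alpha^*\equiv\ell_1$ enters: it is what makes an isometry $U$ available at all.
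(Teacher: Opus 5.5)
\textbf{Step 2 fails as written: the required relabelling does not exist in general.} Your Steps 1 and 3 are correct and are essentially the paper's argument. With $U=\phi$ and $\kappa=\tfrac12$, your $T$ is exactly the operator in \eqref{eq:definition_of_T_target_c}, and the paper applies Lemma~\ref{lem:moving-bump} and Lemma~\ref{lem:isometric-conjugacy} just as you do. (It uses $A=\{2,3,\dots\}$ where you take $A=\mathbb N$ with $\lambda_1=0$; both work.)

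\emph{Why Step 2 fails.} A bijection $\pi$ of $\mathbb N$ with $\pi(n+1)=m_n$ for every $n\geq1$ maps $\{2,3,\dots\}$ onto $M:=\{m_n\colon n\in\mathbb N\}$. So it exists only when $\mathbb N\setminus M$ is a single point: the positions $2,3,\dots$ are already used up, and $e_1$ alone cannot absorb a larger complement. Put differently, $(Ue_n)_{n\geq2}$ would enumerate all but one of the vectors $U_0e_k$. You would therefore need essentially the whole family $(U_0e_k)$ to be weak$^*$ convergent, whereas sequential compactness only gives a subsequence.

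This genuinely fails for some $U_0$. In case (iii) of Remark~\ref{remark-when-W-is-a-predual-of-l1}, take $U_0:=\phi\circ Q$ with $Qe_k:=(-1)^ke_k$. The even and odd terms of $(U_0e_k)$ converge weak$^*$ to $\lim\restricted_{W_\alpha}$ and $-\lim\restricted_{W_\alpha}$. These differ because $W_\alpha\neq c_0$, so $\lim$ does not vanish on $W_\alpha$. Hence no enumeration of a cofinite part of $(U_0e_k)$ converges, and the claim that the relabelling works uniformly in $\alpha$ is false.

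\emph{Two ways to close the gap.}

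The paper's way is to take the canonical identification rather than an arbitrary $U_0$. Use $\phi$ from \eqref{eq:W_alpha_dual} in case (iii), which is onto by Remark~\ref{remark:onto}; use $\mathrm{Id}$ if $W_\alpha\equiv c_0$; and use $\Phi$ if $W_\alpha\equiv c$. Then $(Ub_n)(x)$ is half the difference of two consecutive coordinates of a convergent sequence, so it tends to $0$. Your Steps 1 and 3 then go through verbatim.

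Alternatively, you can keep an abstract $U_0$ by using the freedom in $\nu$ and $(s_n)$ in Lemma~\ref{lem:moving-bump}: only infinitely many consecutive pairs need to come from $M$.
\begin{itemize}
\item Set $\pi(3n+1):=m_{4n-3}$ and $\pi(3n+2):=m_{4n-2}$ for $n\geq1$.
\item Let $\pi$ map the remaining positions $\{1,2,3\}\cup\{3n+3\colon n\geq1\}$ bijectively onto the infinite set $(\mathbb N\setminus M)\cup\{m_{4n-1},m_{4n}\colon n\geq1\}$.
\item Then $Ub_{3n}\to0$ weak$^*$.
\item Define $Se_1:=\kappa u$ and $Se_{n+1}:=\kappa u+(1-\kappa)b_{3n}$. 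This satisfies the lemma with $\nu(n+1)=s_n=3n$.
\end{itemize}
This repaired argument uses only $W_\alpha^*\equiv\ell_1$, which makes $W_\alpha$ separable. It therefore proves $\NA_2(X,c)\neq\mathcal L(X,c)$ for every Banach space $X$ with $X^*\equiv\ell_1$, which is more general than the paper's case-by-case proof.
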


\begin{proof}
\underline{Case 1}: Suppose that $W_\alpha$ is isometric neither to $c$ nor $c_0$. 
Set \[
u=\left(\frac{1}{2^n} \right), \, b_n = \frac{1}{2} (e_{n+1}-e_{n+2}) \, \text{ and } a_n = \frac{1}{2} u + \frac{1}{2} b_n \quad (n \in \mathbb{N}),
\]
where $(e_n)$ denotes the standard coordinate basis for $\ell_1$. 
    Define $T \in \mathcal{L}(W_\alpha, c)$ by 
    \begin{equation}\label{eq:definition_of_T_target_c}
    T(x) = ((Tx)(n))_{n\in\mathbb{N}}, \text{ where } \, (Tx)(n) := \phi(a_n)(x) \text { for } x \in W_\alpha.
    \end{equation}
    Note that for $x=(x_n) \in W_\alpha$, 
    \[
    (Tx)(n)  = \phi \left(\frac{1}{2}u \right)(x) + \frac{1}{2} \phi(b_n)(x) = \phi\left(\frac{1}{2}u \right)(x) + \frac{x_{n+1}}{4} - \frac{x_{n+2}}{4}.
    \]
    It follows that $(Tx)(n)\to \phi(\frac{1}{2}u)(x)$ as $n\to\infty$, so $T(x) \in c$ for every $x \in W_\alpha$. Since $\sup_{n} \|a_n\| \leq 1$, $T$ is bounded. 
    
     Consider the linear isometries $\phi\colon \ell_1 \to W_\alpha^*$ and $\Phi \colon \ell_1 \to c^*$ from \eqref{eq:W_alpha_dual} and \eqref{eq:c_dual}. Let $S:=  \phi^{-1} \circ T^* \circ \Phi$. Observe that for $x \in W_\alpha$
    \[
    [T^* (\Phi (e_k)) ] (x)= [\Phi (e_k)  ] (T(x)) = 
    \begin{cases}
        \lim_{n} (Tx)(n) \quad &\text{if } k =1; \\
        T(x)(k-1) \quad &\text{if } k \geq 2. 
    \end{cases}
    \]
    It follows that 
    \begin{equation*}
    T^* (\Phi (e_1)) = \frac{1}{2} \phi (u) \quad \text{ and } \quad T^* (\Phi (e_{k+1})) = \phi(a_k), \,\, \forall k\in\mathbb{N}.    
    \end{equation*}
    Therefore, 
\begin{equation*}
 S(e_1) = \frac{1}{2} u \quad \text{ and } \quad S(e_{k+1}) = a_k, \,\, \forall k\in\mathbb{N}.       
\end{equation*}
Applying Lemma \ref{lem:moving-bump} with 
\[
\kappa = \frac{1}{2}, \,\, A=\{2,3,4,\ldots \}, \,\, \nu(k)=k-1, \,\,\lambda_k=1, \,\, j_n=n+1, \,\, \text{and} \,\ s_n=n,
\]
we conclude that $\|S\|=1$ and that $S^*$ does not attain its norm. Lemma \ref{lem:isometric-conjugacy} now yields that $T \not\in \NA_2 (W_\alpha, c)$.

\underline{Case 2}: If $W_\alpha$ is isometric to either $c_0$ or $c$, then the same construction of $T:W_\alpha \to c$ in \eqref{eq:definition_of_T_target_c} works after replacing $\phi$ by $\mathrm{Id}\colon \ell_1\to c_0^*$ (identity map) or by $\Phi\colon \ell_1\to c^*$ from \eqref{eq:c_dual}, respectively.
\end{proof}

\subsection{The general case \texorpdfstring{$W_\beta$}{Wbeta}}\label{subsec:W_beta_target}

Having shown that the equality fails when the target space is $c$, we now turn to the remaining class of $\ell_1$-preduals. The next result shows that the same negative phenomenon persists whenever the target space is $W_\beta$ and $W_\beta$ is isometric neither to $c$ nor to $c_0$.

\begin{theorem}\label{thm:target_W_beta}
    Let $\alpha,\beta \in S_{\ell_1}$ be such that $W_\alpha^* \equiv W_\beta^* \equiv \ell_1$. Suppose that $W_\beta$ is isometric neither to $c$ nor $c_0$. Then $\NA_2 (W_\alpha, W_\beta) \neq \mathcal{L}(W_\alpha, W_\beta)$.
\end{theorem}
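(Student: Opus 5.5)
The plan is to build an explicit $T\in\mathcal L(W_\alpha,W_\beta)$ and reduce everything to Lemma~\ref{lem:moving-bump} through Lemma~\ref{lem:isometric-conjugacy}, as in the proof of Theorem~\ref{thm:target_c}.

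\textbf{Setup.} Let $U\colon\ell_1\to W_\alpha^*$ be the surjective isometry given by $\mathrm{Id}$, by $\Phi$ or by $\phi$ (Remark~\ref{remark:onto}), according to whether $W_\alpha\equiv c_0$, $W_\alpha\equiv c$, or neither. Let $V:=\phi_\beta\colon\ell_1\to W_\beta^*$ be the coordinate duality $\phi_\beta(y)(z)=\sum_j y_jz_j$ of Remark~\ref{remark:onto}; it applies because $1/2\le|\beta_1|<1$. Then $V(e_k)$ is the $k$-th coordinate functional on $W_\beta$, so $S:=U^{-1}T^*V$ satisfies
\[
Se_k=U^{-1}\bigl(x\mapsto (Tx)_k\bigr).
\]
The whole task is therefore to prescribe the coordinate functionals $(Tx)_k=U(s_k)(x)$ with $s_k\in\ell_1$ so that three things hold: $(Tx)_k$ converges, $Tx$ satisfies the defining equation of $W_\beta$, and the $s_k$ fit the hypotheses of Lemma~\ref{lem:moving-bump}.

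\textbf{Construction.} Fix a small $\kappa\in(0,1)$ and put $u=(2^{-n})$, $b_n=\frac12(e_{n+1}-e_{n+2})$. In every case $U(b_n)\to0$ weak$^*$: for $U=\mathrm{Id}$ or $\Phi$ this is clear, and for $U=\phi$ we have $U(b_n)(x)=\frac12(x_{n+1}-x_{n+2})\to0$ because $x\in c$. Since $|\beta_1|<1$ and $\|\beta\|=1$, some $\beta_{m+1}\neq0$ with $m\ge1$. Choose:
\begin{itemize}
\item a finite set $B\subset\mathbb N$ carrying a definite amount of the mass $\sum_{k\in B}|\beta_{k+1}|=:\mu>0$;
\item an infinite set $A\subset\mathbb N\setminus B$ with $\sum_{k\in A}|\beta_{k+1}|$ tiny.
\end{itemize}
Then define
\[
s_k:=\kappa u+(1-\kappa)b_k \quad (k\in A),\qquad s_k:=\kappa u \quad (k\notin A\cup B),
\]
so that $U(s_k)(x)\to\kappa U(u)(x)$ for every $x$; this is the limit of $Tx$. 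On $B$, take $s_k:=\theta_k\, r$ with signs $\theta_k=\operatorname{sign}\beta_{k+1}$, where $r\in\ell_1$ solves
\[
\mu\, U(r)=-\Bigl(\beta_1\kappa\,U(u)+\sum_{k\notin B}\beta_{k+1}U(s_k)\Bigr).
\]
This guarantees $\beta_1\lim_k(Tx)_k+\sum_k\beta_{k+1}(Tx)_k=0$, i.e.\ $Tx\in W_\beta$. Boundedness of $T$ is clear from $\sup_k\|s_k\|<\infty$.

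\textbf{Verification of Lemma~\ref{lem:moving-bump}.} Hypotheses (1) and (2) hold on $A$ with $\nu(k)=k$, $\lambda_k=1$, and $s_n$ running through $A$. For (3) we need $\sup_{k\notin A}\|s_k\|<1$. For $k\notin A\cup B$ this is $\kappa<1$. For $k\in B$ we have
\[
\|r\|\le\frac{\kappa|\beta_1|+\kappa\sum_{k\notin B}|\beta_{k+1}|+(1-\kappa)\sum_{k\in A}|\beta_{k+1}|}{\mu},
\]
which is $<1$ once $\kappa$ is small and the $A$-mass is small relative to $\mu$. Once (3) holds, Lemma~\ref{lem:moving-bump} gives $\|S\|=1$ with $S^*$ not norm-attaining, and Lemma~\ref{lem:isometric-conjugacy} yields $T\notin\NA_2(W_\alpha,W_\beta)$.

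\textbf{Main obstacle.} The delicate step is exactly (3), namely the norm bound on the compensating block $B$. Its danger is that $|\beta_1|\ge 1/2$ may far exceed $\sum_{k\ge2}|\beta_k|$. Shrinking $\kappa$ is what makes the $\beta_1$-contribution negligible, which is why I commit to choosing $\kappa$ last, after $B$ (hence $\mu$) is fixed.
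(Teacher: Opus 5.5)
Your construction is correct, but it takes a genuinely different route from the paper's.

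\textbf{The paper's route.} The paper splits into cases according to the support of $\beta$.
\begin{itemize}
\item If $\beta$ is finitely supported, the paper puts the bumps $(1-\theta)u+\theta b_k$ on coordinates $k\geq N$, where $\beta_{k+1}=0$. These contribute nothing to the defining equation of $W_\beta$. Only the limit term $\beta_1(1-\theta)\phi(u)(x)$ needs compensating, and this is done by a multiple of $u$ at one coordinate $m$ with $\beta_{m+1}\neq 0$.
\item If $\beta$ is infinitely supported, the paper places the bumps in pairs $(p_n,q_n)$: the vectors $\kappa u-(1-\kappa)r_nb_n$ and $\kappa u+(1-\kappa)b_n$, with $r_n=\beta_{q_n+1}/\beta_{p_n+1}$ and $|r_n|\leq 1$. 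Their contributions cancel exactly, term by term. The remaining $\sigma\kappa\phi(u)(x)$ is again compensated at a single coordinate $m$.
\end{itemize}

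\textbf{Your route.} You put the bumps on an infinite set $A$ of small $\beta$-mass. A finite block $B$ then absorbs everything, bump contributions included, through $r=-\mu^{-1}\bigl(\beta_1\kappa u+\sum_{k\notin B}\beta_{k+1}s_k\bigr)$. Lemma~\ref{lem:moving-bump} only constrains $\sup_{j\notin A}\|Se_j\|$, so it is harmless that $r$ is no longer a multiple of $u$.

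\textbf{Comparison.} Your approach removes both the case distinction on the support of $\beta$ and the pairing bookkeeping. The price is a norm estimate in place of exact cancellation. The paper's version, in exchange, gives fully explicit operators whose coordinate functionals off the bump set are scalar multiples of $u$.

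\textbf{Order of choices.} For the estimate to close, the choices must be made in this order:
\begin{enumerate}
\item Choose $B$, for instance $B=\{m\}$ with $\beta_{m+1}\neq0$; this fixes $\mu$.
\item Choose $A$ with $\sum_{k\in A}|\beta_{k+1}|\leq\mu/2$, either a tail or a set where $\beta_{k+1}=0$.
\item Choose $\kappa<\mu/2$.
\end{enumerate}
Since $|\beta_1|+\sum_{k\notin A\cup B}|\beta_{k+1}|\leq 1$ and $\|s_k\|\leq 1$ on $A$, this gives $\|r\|\leq\bigl(\kappa+\sum_{k\in A}|\beta_{k+1}|\bigr)/\mu<1$. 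Your opening ``fix a small $\kappa$'' should therefore be read as coming after $A$ and $B$ are chosen, as your final paragraph indicates.
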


\begin{proof}
As in Theorem \ref{thm:target_c}, set 
\[
u= \left(\frac{1}{2^n} \right) \quad  \text{and}  \quad  b_n = \frac{1}{2} (e_{n+1}-e_{n+2}) \quad (n\in\mathbb{N}),
\]
where $(e_n)$ denotes the standard coordinate basis for $\ell_1$. 

\underline{Case 1}: $W_\alpha$ is isometric neither to $c$ nor $c_0$.

\noindent \underline{Case 1-1}: Suppose that $\beta$ is finitely supported, i.e., $\beta= (\beta_1,\ldots, \beta_N, 0,0, \ldots )$ for some $N \in \mathbb{N}$. Let $\phi_\alpha \colon \ell_1 \to W_\alpha^*$ and $\phi_\beta\colon \ell_1\to W_\beta^*$ be the canonical linear isometries (see Remark \ref{remark:onto}). Since both maps are given by the same formula, we simply write $\phi$ instead of $\phi_\alpha$ and $\phi_\beta$ whenever no confusion arises. Choose $m \in \{1,\ldots, N-1 \}$ such that $\beta_{m+1} \neq 0$. Put \[
v:= -\frac{\beta_1}{\beta_{m+1}}
\]
and choose $0<\theta<1$ such that $(1-\theta) |v| < 1$.

    Define $T \in \mathcal{L}(W_\alpha, c)$ by $T(x) = ((Tx)(n))_{n\in\mathbb{N}}$ for $x \in W_\alpha$, where 
    \begin{equation}\label{eq:definition_of_T_W_beta_case1}
     (Tx)(k) := \begin{cases}
           v(1-\theta) \,\phi(u )(x) \quad &\text{ for } k = m \\ 
          \phi( (1-\theta) u+ \theta b_k)(x) \quad &\text{ for } k \geq N \\ 
          0 &\text{ otherwise. }
     \end{cases}
    \end{equation}
    Note that $(Tx)(n) \to (1-\theta)\phi (u)(x)$ as $n\to\infty$. Moreover, 
    \begin{equation*}
    \beta_1 \lim_n (Tx)(n) + \sum_{k=1}^\infty \beta_{k+1} (Tx)(k) 
    = \beta_1 (1-\theta) \phi(u)(x)  + \beta_{m+1} (v(1-\theta)\phi(u)(x) )=0.
    \end{equation*}
    It follows that $T(x) \in W_\beta$ for every $x \in W_\alpha$. Since the coordinate functionals in \eqref{eq:definition_of_T_W_beta_case1} are uniformly bounded, we may regard $T$ as a bounded linear operator from $W_\alpha$ into $W_\beta$.

As in the proof of Theorem \ref{thm:target_c}, we can observe that 
    \[
    T^* (\phi_\beta (e_k)) = \begin{cases}
        v(1-\theta) \phi (u) \quad &\text{ for } k=m; \\ 
        \phi( (1-\theta) u+ \theta b_k) \quad &\text{ for } k \geq N \\
        0 &\text{ otherwise. }
    \end{cases} 
    \]
    Let $S:= \phi_\alpha^{-1}T^*\phi_\beta$. Then by \eqref{eq:definition_of_T_W_beta_case1}
\[
Se_k=
\begin{cases}
v(1-\theta)u, \quad &\text{ for } k=m,\\
(1-\theta)u+\theta b_k, \quad &\text{ for } k\ge N,\\
0, \quad &\text{ otherwise}.
\end{cases}
\]
Define $A=\{N,N+1,\ldots\}$. Note that $\sup_{k\notin A}\|Se_k\|=(1-\theta)|v|<1$.
Applying Lemma~\ref{lem:moving-bump} with
\[
\kappa=1-\theta, \,\, \nu(k)=k, \,\, \lambda_k=1, \,\, j_n = s_n = N+n-1,
\]
we observe that $S^*$ does not attain its norm. Hence $T \not\in \NA_2 (W_\alpha,W_\beta)$ by Lemma \ref{lem:isometric-conjugacy}.

\noindent \underline{Case 1-2}: Suppose that $\beta$ is infinitely supported. For simplicity, set $\sigma = \sum_{j=1}^\infty \beta_j$. Take $m \in \mathbb{N}$ such that $\beta_{m+1} \neq 0$. Define $\gamma \in c_{00}$ by 
    \[
    \gamma(m) = - \frac{\sigma}{\beta_{m+1}} \quad \text{ and } \quad \gamma(j) = 0 \text{ otherwise}.
    \]
    Let $\kappa \in (0,1/2)$ be sufficiently small so that 
\begin{equation}\label{eq:kappa}
\kappa |1 + \gamma(m)| < \delta < 1    
\end{equation}
for some $\delta \in (0,1)$. Since $\beta$ is infinitely supported, we can find increasing sequences $\{p_n\}$ and $\{q_n\}$ in $\mathbb{N} \setminus \{m\}$ such that all the indices $p_n$ and $q_n$ are pairwise distinct, and 
\begin{equation*}
    \beta_{p_n +1}\neq 0, \, \beta_{q_n+1} \neq 0, \, \text{ and } \, |\beta_{q_n +1}| \leq |\beta_{p_n +1} |. 
\end{equation*}
Set $r_n = \beta_{q_n +1}/\beta_{p_n +1}$; thus $|r_n| \leq 1$.
Define $T\colon W_\alpha \to c$ by 
\begin{equation}\label{eq:definition_of_T_W_beta_case2}
(Tx)(k) = \begin{cases}
        \kappa (1+\gamma(m)) \phi(u)(x)  \quad &\text{ for } k=m; \\ 
        \kappa \phi(u)(x) - (1-\kappa) r_n \phi(b_n) (x) \quad &\text{ for } k= p_n; \\
        \kappa \phi(u)(x) + (1-\kappa) \phi(b_n) (x) \quad &\text{ for } k= q_n; \\ 
        \kappa \phi(u)(x) \quad &\text{ otherwise.}  
    \end{cases} 
\end{equation}
We claim that the range of $T$ is contained in $W_\beta$. Observe first that for $x \in W_\alpha$, $\phi(b_n)(x) = (x_{n+1}-x_{n+2})/2\to 0$. It follows that $(Tx)(k) \to \kappa \phi(u)(x)$ as $k\to\infty$. Next, 
\begin{align*}
    &\beta_1 \lim_k (Tx)(k) + \sum_{k=1}^\infty \beta_{k+1} (Tx)(k) \nonumber \\
    &\quad = \sum_{k=1}^\infty \beta_{k} \kappa \phi(u)(x) + \beta_{m+1} \kappa \gamma(m) \phi(u)(x) + \sum_{n=1}^\infty (1-\kappa) ( \beta_{q_n +1} - \beta_{p_n +1} r_n  ) \phi(b_n)(x) \nonumber \\ 
    &\quad =\sigma \kappa \phi(u)(x) - \beta_{m+1}\kappa\left( \frac{\sigma}{\beta_{m+1}} \right) \phi(u)(x) +  \sum_{n=1}^\infty (1-\kappa) ( \beta_{q_n +1} - \beta_{q_n+1} ) \phi(b_n)(x) \nonumber =0. \nonumber
\end{align*}
This proves that $T(x) \in W_\beta$ for every $x \in W_\alpha$. Notice that all the coefficient functionals in \eqref{eq:definition_of_T_W_beta_case2} have norm at most one. Hence $T \in \mathcal{L}(W_\alpha,W_\beta)$.

Consider $S:=\phi_\alpha^{-1}T^*\phi_\beta$ and observe from \eqref{eq:definition_of_T_W_beta_case2} that 
\[
Se_k = \begin{cases}
        \kappa (1+\gamma(m)) u  \quad &\text{ for } k=m; \\ 
        \kappa u - (1-\kappa) r_n b_n \quad &\text{ for } k= p_n; \\
        \kappa u + (1-\kappa) b_n \quad &\text{ for } k= q_n; \\ 
        \kappa u \quad &\text{ otherwise.}  
    \end{cases} 
\]
Apply Lemma~\ref{lem:moving-bump} with $A=\{p_n,q_n \colon n\in\mathbb N\}$ and 
\[
\nu(p_n)=\nu(q_n)=n, \,\, \lambda_{p_n}=-r_n, \,\, \lambda_{q_n}=1, \,\, j_n=q_n,\,\, s_n =n.
\]
Note from \eqref{eq:kappa} that 
\[
\sup_{k \not\in A} \|Se_k\| \leq \max\{\delta, \kappa\} <1.
\]
Therefore, $S^*$ does not attain its norm, and Lemma \ref{lem:isometric-conjugacy} yields that $T \not\in \NA_2 (W_\alpha,W_\beta)$.

\underline{Case 2}: $W_\alpha$ is isometric to either $c_0$ or $c$. In this case, the same construction works with only a minor modification. Namely, in the definition of $T$ in \eqref{eq:definition_of_T_W_beta_case1} and \eqref{eq:definition_of_T_W_beta_case2}, we replace $\phi$ by the canonical identification $\mathrm{Id}\colon \ell_1\to c_0^*=\ell_1$ (identity map) when $W_\alpha\equiv c_0$, and by $\Phi\colon \ell_1\to c^*$ from \eqref{eq:c_dual} when $W_\alpha\equiv c$.
\end{proof}

\subsection{Norm-attaining second adjoints on \texorpdfstring{$\ell_1$}{l1}}\label{subsec:l1}

In this subsection, we provide the $\ell_1$ part of Theorem~\ref{thm:main_c0}. 

\begin{theorem}\label{thm:NA2_l1_l1}
    $\mathcal{L}(\ell_1,\ell_1)=\NA_2 (\ell_1,\ell_1)$.
\end{theorem}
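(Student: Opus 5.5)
The plan is to realize a norm-attaining point for $T^{**}$ as a weak$^*$-cluster point of a maximizing sequence of basis vectors, and to compute the norm of its image using the decomposition $\ell_\infty^*=\ell_1\oplus_1 c_0^\perp$ of Fact~\ref{fact:Hewitt-Yosida}. Let $T\in\mathcal L(\ell_1,\ell_1)$ with $\|T\|=1$. Since $\|T\|=\sup_n\|Te_n\|$, we may pick indices $(m_k)$ with $\|Te_{m_k}\|\to1$; we can assume the supremum is not attained, since otherwise $T\in\NA(\ell_1,\ell_1)\subseteq\NA_2(\ell_1,\ell_1)$.

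\emph{Step 1: split off the weak$^*$ limit.} The sequence $(Te_{m_k})$ is bounded in $\ell_1$. By a diagonal argument, pass to a subsequence converging coordinatewise to some $y\in\ell_1$ with $\|y\|\le1$. Put $a_k:=Te_{m_k}-y$. Then $(a_k)$ is bounded and $a_k(i)\to0$ for every $i$. Lemma~\ref{lemma:disjoint_support_lemma} then yields an increasing sequence $(k_j)$ and pairwise disjointly supported, finitely supported vectors $(b_j)$ with $\|a_{k_j}-b_j\|\to0$.

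\emph{Step 2: asymptotic additivity.} We have $\|y+b_j\|\to1$. Since the supports of the $b_j$ are disjoint, they eventually lie beyond any fixed finite set. Approximating $y$ by a finite head $y\chi_F$ with $\|y-y\chi_F\|<\varepsilon$ shows that $\bigl|\|y+b_j\|-\|y\|-\|b_j\|\bigr|\le2\varepsilon$ for large $j$. Hence $\|b_j\|\to1-\|y\|$.

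\emph{Step 3: the norming element.} Let $\psi\in B_{\ell_\infty^*}$ be a weak$^*$-cluster point of $(e_{m_{k_j}})_j$, viewed inside $\ell_1^{**}=\ell_\infty^*$; fix a subnet (or ultrafilter) realizing it. Because $T^{**}$ is weak$^*$-continuous, $T^{**}\psi$ is the weak$^*$ limit along this net of $Te_{m_{k_j}}=y+b_j+o(1)$. Therefore $T^{**}\psi=y+\mu$, where $\mu$ is the corresponding weak$^*$ limit of $(b_j)$ in $\ell_\infty^*$. Testing against $c_0$ gives $\mu\in c_0^\perp$, because $b_j(i)\to0$ for each $i$. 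Now define $\zeta\in S_{\ell_\infty}$ by $\zeta=\sign(b_j)$ on $\supp b_j$ and $\zeta=0$ elsewhere; this is possible by disjointness. Then $\mu(\zeta)=\lim\zeta(b_j)=\lim\|b_j\|=1-\|y\|$, so $\|\mu\|\ge1-\|y\|$. By Fact~\ref{fact:Hewitt-Yosida},
\[
\|T^{**}\psi\|=\|y\|+\|\mu\|\ge1 .
\]
Since also $\|\psi\|\le1$, this forces $\|\psi\|=1$ and shows that $T^{**}$ attains its norm at $\psi$.

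The main obstacle is Step 3. We must make sure that the cluster point is taken along the same subsequence used in Lemma~\ref{lemma:disjoint_support_lemma}. We must also make sure that the $\ell_1$-component of $T^{**}\psi$ is exactly $y$ and that the singular component is controlled from below. Weak$^*$ lower semicontinuity only gives upper bounds, which is why the explicit test functional $\zeta$ built from the disjoint supports is needed. If the subnet bookkeeping proves awkward, the fallback is to fix a free ultrafilter $\mathcal U$ on $\mathbb N$ and set $\psi:=\mathcal U\text{-}\lim_j e_{m_{k_j}}$.
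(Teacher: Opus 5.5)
Your proposal is correct and follows essentially the same route as the paper. It splits the maximizing sequence $(Te_n)$ into its weak$^*$ (coordinatewise) limit $y$ plus an asymptotically disjoint remainder via Lemma~\ref{lemma:disjoint_support_lemma}, takes an ultrafilter limit (cluster point) of the basis vectors, and combines $\ell_\infty^*=\ell_1\oplus_1 c_0^\perp$ with a sign functional on the disjoint supports. The differences are harmless: you prove asymptotic additivity by a finite-head approximation, and you use only the lower bound $\|\mu\|\ge 1-\|y\|$, whereas the paper also records the matching upper bound.
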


We will need the following fact, which simply means that $c_0$ is an $M$-ideal in $c_0^{**}=\ell_\infty$ (see \cite[Example III.1.4.(a)]{HWW}).

\begin{fact}\label{fact:Hewitt-Yosida}
    $\ell_\infty^* = c_0^\perp \oplus_1 \ell_1$.
\end{fact}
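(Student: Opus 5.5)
We will realize the decomposition explicitly, restricting to $c_0$ to obtain the $\ell_1$-component and taking the remainder as the $c_0^\perp$-component. Throughout, $\ell_1$ is regarded as a subspace of $\ell_\infty^*$ through the canonical embedding $y\mapsto\bigl(\xi\mapsto\sum_n y_n\xi_n\bigr)$, which is an isometry. This is the usual identification $\ell_1\subseteq\ell_1^{**}=\ell_\infty^*$.

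\emph{Algebraic decomposition.} Given $\varphi\in\ell_\infty^*$, its restriction $\varphi\restricted_{c_0}$ belongs to $c_0^*\equiv\ell_1$. So there is a unique $y\in\ell_1$ with $\varphi(x)=\sum_n y_nx_n$ for all $x\in c_0$; explicitly, $y_n=\varphi(e_n)$. Put $\psi:=\varphi-y$. Then $\psi$ vanishes on $c_0$, so $\psi\in c_0^\perp$. The decomposition is unique, because $\ell_1\cap c_0^\perp=\{0\}$: an element of $\ell_1$ that vanishes on every $e_n$ is zero. Hence $\ell_\infty^*=\ell_1\oplus c_0^\perp$ algebraically, and both projections have norm at most one.

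\emph{The norm is additive.} The inequality $\|\varphi\|\leq\|y\|+\|\psi\|$ is just the triangle inequality, so everything rests on the reverse inequality. Fix $\eps>0$.
\begin{enumerate}[label=(\roman*)]
\item Choose $M\in\N$ with $\sum_{n>M}|y_n|<\eps$.
\item Choose $\xi\in B_{\ell_\infty}$ with $\psi(\xi)>\|\psi\|-\eps$.
\item Define $\eta\in B_{\ell_\infty}$ by $\eta_n:=\sign(y_n)$ for $n\leq M$ and $\eta_n:=\xi_n$ for $n>M$.
\end{enumerate}
The key observation is that $\eta-\xi$ is finitely supported, hence lies in $c_0$. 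Since $\psi\in c_0^\perp$, this gives $\psi(\eta)=\psi(\xi)$. Therefore
\[
\|\varphi\|\geq\varphi(\eta)=\sum_{n\leq M}|y_n|+\sum_{n>M}y_n\xi_n+\psi(\xi)\geq(\|y\|-\eps)-\eps+\|\psi\|-\eps .
\]
Letting $\eps\to0$ yields $\|\varphi\|\geq\|y\|+\|\psi\|$, which completes the proof.

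There is no serious obstacle. The only point that needs care is the gluing step: the part of $\eta$ used to norm $y$ (its first $M$ coordinates) must not disturb the value of $\psi$. This works precisely because $\psi$ annihilates finitely supported sequences. The same argument is what shows that $c_0$ is an $M$-ideal in $\ell_\infty$.
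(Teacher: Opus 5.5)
Your proof is correct, and it takes a different route from the paper. The paper gives no argument here: it simply cites that $c_0$ is an $M$-ideal in $c_0^{**}=\ell_\infty$ (HWW, Example III.1.4(a)). General $M$-ideal theory then supplies the $L$-decomposition $X^{***}=X^*\oplus_1 X^\perp$ for $X=c_0$, with the complement of $c_0^\perp$ being the canonical copy of $c_0^*=\ell_1$.

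You verify this $L$-decomposition by hand. Restriction to $c_0$ gives the algebraic splitting. Norm additivity comes from gluing a sign vector that norms the head of $y$ to a near-norming vector for $\psi$. The gluing is legitimate because $\psi$ cannot see finitely supported changes. Since the existence of such an $L$-projection onto $c_0^\perp$ is the definition of $c_0$ being an $M$-ideal in $\ell_\infty$, you have in effect proved the cited fact from scratch.

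The two approaches buy different things. The citation places the fact in a framework covering every $M$-embedded space, such as $c_0(\Gamma)$ or the compact operators on a Hilbert space. It also identifies the $\ell_1$-component as the unique norm-preserving extensions of functionals on $c_0$. Your argument is self-contained. It also transfers verbatim to $\ell_\infty(\Gamma)^*=c_0(\Gamma)^\perp\oplus_1\ell_1(\Gamma)$ by replacing $\{1,\dots,M\}$ with a finite set $F\subseteq\Gamma$ carrying almost all of $\|y\|$. That is exactly what the paper's remark on $\ell_1(\Gamma)$ implicitly relies on.

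One small slip: you assert that both projections have norm at most one before you have justified it. At that stage only $\varphi\mapsto y$ is known to be contractive, since $\|y\|=\|\varphi\restricted_{c_0}\|\leq\|\varphi\|$. The map $\varphi\mapsto\psi=\varphi-y$ is a priori bounded only by $2$, and its contractivity follows from the norm additivity you prove afterwards. Since you never use that claim, nothing breaks; just move it after the additivity step.
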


\begin{proof}[Proof of Theorem~\ref{thm:NA2_l1_l1}]
    Let $T \in \mathcal{L}(\ell_1,\ell_1)$ be such that $\|T\|=1$. Denoting by $(e_n)$ the canonical basis of $\ell_1$, note that $\|T\| =\sup_n \|Te_n\|$. If the supremum is attained at some $n$, then $T \in \NA (\ell_1,\ell_1)$. Suppose that the supremum is not attained; hence there exists an increasing sequence $(n_k) \subseteq \mathbb{N}$ such that $\|Te_{n_k} \| \to 1$. Passing to a subsequence and relabeling, we may assume that $\|Te_n\| \to 1$. Put $y_n := Te_n$ for every $n \in \mathbb{N}$. 

    Passing to a further subsequence, assume that $(y_n)$ converges to some $y \in B_{\ell_1}$ in the weak$^*$ topology (which is metrizable on bounded subsets of $\ell_1$). Let $u_n : = y_n -y \in 2B_{\ell_1} $ for every $n \in \mathbb{N}$. Then $(u_n)$ is weak$^*$-null. By applying Lemma \ref{lemma:disjoint_support_lemma}, we can find a bounded sequence $(b_n)$ in ${\ell_1}$ such that 
\begin{enumerate}[label=(\roman*)] 
\itemsep0.25em
\item $\supp(b_n) \cap \supp(b_m) = \varnothing$ for every $n \not= m$ and 
		\item there exists an increasing sequence $(\tau(n))$ of natural numbers such that $\|u_{\tau(n)} - b_{n}\| \to 0$ as $n \to \infty$.
        \end{enumerate}
We may also assume that $F_n:=\supp(b_n)$ is finite for every $n\in\mathbb N$.

        Note that $\| y \restricted_{F_n} \| \to 0$ since $(F_n)$ is pairwise disjoint. Observe also that 
        \begin{align*}
            \|y\| + \|b_n\| - \|y+b_n\| &= \|y \restricted_{F_n}\|  + \|b_n\| - \|y \restricted_{F_n} +b_n\| \\
            &\leq \|y \restricted_{F_n}\|  + \|b_n\| - (  \|b_n\| - \|y \restricted_{F_n}\|  ) = 2 \|y \restricted_{F_n} \|.
        \end{align*}
        It follows that 
        \begin{equation}\label{eq:y_bn_estimate}
        \bigl|  \|y\| + \|b_n\| - \|y+b_n\| \bigr| \leq 2 \|y \restricted_{F_n} \| \to 0.
        \end{equation}
        On the other hand, 
        \begin{equation}\label{eq:y_bn_estimate2}
        \bigl| \| y + b_n\| - \|y_{\tau(n)} \| \bigl| = \bigl| \| y + b_n\| - \|y+u_{\tau(n)} \| \bigr| \leq \|b_n - u_{\tau(n)}\| \to 0 
        \end{equation}
        Combining \eqref{eq:y_bn_estimate} and \eqref{eq:y_bn_estimate2} with the fact that $\|y_{\tau(n)}\|\to1$, we obtain that
        \begin{equation}\label{eq:limit_of_bn}
        \|b_n\| \to 1 - \|y\|.
        \end{equation}  
        Let $\mathcal{U}$ be a free ultrafilter on $\mathbb{N}$, and define $\phi \colon \ell_\infty \to \mathbb{R}$ by 
        \[
        \phi (\zeta) = \lim_{\mathcal{U}} \zeta(e_{\tau(n)}) \quad (\zeta \in \ell_\infty).
        \]
        Then $\|\phi\| = 1$, and for $\zeta \in \ell_\infty$, we have 
        \begin{align*}
            (T^{**} \phi)(\zeta) = \phi (T^* \zeta) &= \lim_{\mathcal U} \zeta(Te_{\tau(n)}) \\
            &=\lim_{\mathcal U} \zeta(y_{\tau(n)}) = \lim_{\mathcal{U}} \zeta(u_{\tau(n)} + y ) = \zeta(y) + \lim_{\mathcal{U}} \zeta(b_n).
        \end{align*}
       If we denote by $\psi\colon \ell_\infty\to\mathbb R$ the functional given by
       \[
       \psi(\zeta)=\lim_{\mathcal{U}} \zeta(b_n),
       \]
       then we have  
        \[
        T^{**} \phi = y + \psi,
        \]
        where $y \in \ell_1 \hookrightarrow \ell_\infty^*$. Since $(b_n)$ have pairwise disjoint supports, $\psi$ annihilates $c_0$, i.e., $\psi \in c_0^\perp$. Moreover, if $z_0 \in S_{\ell_\infty}$ is selected in such a way that $z_0 (b_n)=\|b_n\|$ for every $n \in \mathbb{N}$ (which is possible because the supports of $(b_n)$ are pairwise disjoint), then
        \[
        \psi (z_0) = \lim_{\mathcal U} z_0(b_n) = \lim_{\mathcal U} \|b_n\| = 1 - \|y\|
        \]
        where the last equality follows from \eqref{eq:limit_of_bn}. By definition, $\|\psi\| \leq \lim_{\mathcal U} \|b_n\| = 1 - \|y\|$; hence $\|\psi\| = 1-\|y\|$. 
        Therefore, by Fact~\ref{fact:Hewitt-Yosida},
        \[
        \|T^{**} \phi \| = \|y\| + \|\psi\| = \|y\| + (1-\|y\|)=1. 
        \]
        This proves that $T \in \NA_2 (\ell_1,\ell_1)$, as desired.
\end{proof}

Observe that this result, together with Theorem~\ref{thm:target_c}, provides the following interesting example.

\begin{corollary}\label{cor:ccN3-noN2}
$\NA_3(c,c)=\mathcal L(c,c)$ while $\NA_2(c,c)\neq \mathcal L(c,c)$.    
\end{corollary}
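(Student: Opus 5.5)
The second half of the statement is already available: $W_\alpha\equiv c$ for a suitable $\alpha$ (e.g.\ $\alpha=e_2$, by Remark~\ref{remark-when-W-is-a-predual-of-l1}(ii)), so Theorem~\ref{thm:target_c} with $W_\alpha=c$ yields $\NA_2(c,c)\neq\mathcal L(c,c)$. The work is in the first half, and the plan is to reduce it to Theorem~\ref{thm:NA2_l1_l1} by passing to the adjoint and using the isometry $\Phi\colon\ell_1\to c^*$ from \eqref{eq:c_dual}.

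Fix $T\in\mathcal L(c,c)$ and set $S:=\Phi^{-1}T^*\Phi\in\mathcal L(\ell_1,\ell_1)$. This is exactly the conjugation of Lemma~\ref{lem:isometric-conjugacy} with $U=V=\Phi$, so $\|S\|=\|T\|$ and $T^*=\Phi S\Phi^{-1}$. Taking adjoints repeatedly gives
\[
T^{**}=(\Phi^{-1})^*S^*\Phi^*, \qquad T^{***}=\Phi^{**}S^{**}(\Phi^{-1})^{**}.
\]
Since $\Phi^{**}$ and $(\Phi^{-1})^{**}$ are surjective linear isometries between $\ell_1^{**}$ and $c^{***}$, the operator $T^{***}$ attains its norm if and only if $S^{**}$ does. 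In other words, $T\in\NA_3(c,c)$ if and only if $S\in\NA_2(\ell_1,\ell_1)$.

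By Theorem~\ref{thm:NA2_l1_l1}, every $S\in\mathcal L(\ell_1,\ell_1)$ lies in $\NA_2(\ell_1,\ell_1)$. Hence every $T\in\mathcal L(c,c)$ lies in $\NA_3(c,c)$, which proves the first equality.

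The only point needing care is the bookkeeping of adjoints in the displayed identities: $(\Phi^{-1})^{**}=(\Phi^{**})^{-1}$, and the adjoint of a surjective isometry is again a surjective isometry. Both are routine, so I expect no real obstacle here.
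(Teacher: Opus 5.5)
Your proof is correct and is exactly the argument the paper leaves implicit when it says the corollary follows from Theorem~\ref{thm:NA2_l1_l1} together with Theorem~\ref{thm:target_c}. The inequality $\NA_2(c,c)\neq\mathcal L(c,c)$ is Theorem~\ref{thm:target_c} for $W_\alpha\equiv c$; its Case~2 even builds the operator directly on $c$ via $\Phi$. The equality $\NA_3(c,c)=\mathcal L(c,c)$ follows, as you do, by conjugating $T^*$ through $\Phi\colon\ell_1\to c^*$, so that $T^{***}=\Phi^{**}S^{**}(\Phi^{**})^{-1}$ with $S\in\mathcal L(\ell_1,\ell_1)=\NA_2(\ell_1,\ell_1)$.
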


Since $c$ is isomorphic to $c_0$, Theorem \ref{theorem:c0} and Corollary \ref{cor:ccN3-noN2} already show that the property 
\[
\NA_2 (X,X)=\mathcal{L}(X,X)
\]
is not an isomorphic property. Theorem \ref{Ostrovskii-for-NA1-and-NA2} will strengthen this observation by showing that the property can be destroyed by an equivalent renorming on every infinite-dimensional Banach space.

\begin{remark}\label{rem-ell1gamma}
The proof of Theorem \ref{thm:NA2_l1_l1} routinely extends to spaces $\ell_1(\Gamma)$. Let $\Gamma$ be an arbitrary (infinite) set. Then  $$\mathcal{L}(\ell_1(\Gamma), \ell_1(\Gamma))= \NA_2(\ell_1(\Gamma), \ell_1(\Gamma)).$$ 
\end{remark}

 \section{Holub--Mujica-type theorems for \texorpdfstring{$\NA_1$}{NA1} and \texorpdfstring{$\NA_2$}{NA2}}\label{section:HolubMujica}

In this section, we establish Holub--Mujica-type theorems for the classes
$\NA_1$ and $\NA_2$. Our proofs are inspired by the proof of \cite[Theorem B]{DJM} (Theorem \ref{thm:DJM}), but with two important differences. For $\NA_1$, the natural ambient space is the subspace $\mathcal{L}_{w^*,w^*} (Y^*, X^*)$ of $\mathcal{L}(Y^*,X^*)$ consisting of weak$^*$-weak$^*$-continuous operators, rather than $\mathcal{L}(X,Y)$, and the Principle of Local Reflexivity enters through Lemma~\ref{lem:PLR_Johnson} to pass from finite-rank operators to weak$^*$-weak$^*$-continuous ones. For $\NA_2$, the ambient space is the subspace $\mathcal{L}_{w^*,w^*}^{(2)}(X, Y)$ of $\mathcal{L}(X^{**},  Y^{**})$ consisting of operators of the form $S^{**}$ for $S \in \mathcal{L}(X,Y)$, and a version of the Principle of Local Reflexivity (Theorem~\ref{thm:PLR}) is again required. In both cases, the separability assumption plays a crucial role in promoting sequential closure to full closure (Lemma~\ref{lemmaC-NA1} and Lemma~\ref{lem:NA2-SOTseqclosed}).

\begin{theorem}[\text{\cite[Theorem B]{DJM}}]\label{thm:DJM} Let $X$ be a reflexive space and $Y$ be an arbitrary Banach space. Consider the following statements.
\begin{enumerate}[label=(\alph*)]
\itemsep0.25em
		\item $\mathcal{K}(X, Y) = \mathcal{L}(X,Y)$.
		\item $\NA(X, Y) = \mathcal{L}(X,Y)$.
		\item The unit ball $B_{\mathcal{K}(X,Y)}$ is SOT-closed.
		\item $(\mathcal{L}(X,Y), \tau_c)^* = (\mathcal{L}(X,Y), \|\cdot\|)^*$.
	\end{enumerate}
Then, the implications (a) $\Rightarrow$ (b) $\Rightarrow$ (c) and (a) $\Rightarrow$ (d) $\Rightarrow$ (c) hold. If, moreover, the pair $(X,Y)$ has the BCAP, then (c) $\Rightarrow$ (a) and hence the four statements are equivalent.
\end{theorem}

\subsection{The case of \texorpdfstring{$\NA_1$}{NA1}}
A key ingredient in the proof of Theorem \ref{thm:DJM} is the property $(P)$ introduced in \cite{DJM}. Since our setting involves the space $\mathcal L_{w^*,w^*}(Y^*,X^*)$ rather than $\mathcal L(X,Y)$, we need a corresponding version of this property adapted to the weak$^*$-to-weak$^*$ continuous framework.

\begin{definition} \label{definition-property-star} Let $X$ and $Y$ be Banach spaces. The pair $(X,Y)$ is said to have \textit{property $\propertystar$} if there is a relatively $\wot$ compact set $K \subseteq \weakstar$ such that 
\begin{equation*}
0 \in \overline{K}^{\wot} \ \ \ \mbox{and} \ \ \ 0 \not\in \overline{\co}^{\|\cdot\|}(K).
\end{equation*}
\end{definition}

The main result of this subsection is the following theorem.

\begin{theorem} \label{theorem-holub-mujica-NA1}
	Let $X$ and $Y$ be Banach spaces. Suppose that $Y^*$ is separable, and that the pair $(Y^*,X^*)$ has the $\lambda$-BAP. If $\NA_1 (X,Y)=\mathcal{L}(X,Y)$, then $\mathcal{K}(X,Y)=\mathcal{L}(X,Y)$.
\end{theorem}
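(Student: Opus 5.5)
The plan follows the scheme of the proof of Theorem~\ref{thm:DJM}, transported to the weak$^*$ setting through the isometric identification $T\mapsto T^*$ of $\mathcal L(X,Y)$ with $\weakstar$. Under this identification $T\in\NA_1(X,Y)$ exactly when $T^*\in\NA(Y^*,X^*)$, and $T$ is compact exactly when $T^*$ is (Schauder). So the hypothesis becomes: every $A\in\weakstar$ attains its norm on $Y^*$, and we must show every such $A$ is compact. I argue by contradiction and fix a non-compact $T$.

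\textbf{Step 1 (approximation by finite-rank adjoints).} By the $\lambda$-BAP of the pair $(Y^*,X^*)$ there is a net of finite-rank operators $S_\alpha\colon Y^*\to X^*$ with $\|S_\alpha\|\le\lambda\|T\|$ and $S_\alpha\to T^*$ in $\tau_c$. Since $Y^*$ is separable, uniform convergence on a countable dense set of compacts lets me extract a sequence $(S_n)$ with $S_n\to T^*$ in SOT. Next I use Lemma~\ref{lem:PLR_Johnson} (local reflexivity) to replace each $S_n$ by a weak$^*$-to-weak$^*$ continuous finite-rank operator. This gives $R_n^*$ with $R_n\in\mathcal F(X,Y)$, a slightly enlarged norm bound $\|R_n\|\le(\lambda+\varepsilon)\|T\|$, and $R_n^*\to T^*$ in SOT on $Y^*$. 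The upshot is that $T^*$ is an SOT-sequential limit of a bounded sequence of compact operators in $\weakstar$, while $T^*$ itself is not compact.

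\textbf{Step 2 (from non-closedness to property $\propertystar$).} From Step~1 I extract a set witnessing property $\propertystar$. Since $T^*$ is not compact, the differences $T^*-R_n^*$ stay at norm distance bounded below from $\mathcal K$ along a subsequence; in the natural formulation, $\mathrm{dist}(T^*-R_n^*,\mathcal K)\ge\delta>0$ for all $n$. These differences converge to $0$ in SOT, hence in WOT. I then take $K$ to be a suitable set of block differences $\{T^*-R_n^*\}$, or of normalized convex tails of them. This $K$ lies in $\weakstar$ and is relatively WOT compact, because it is bounded and WOT limits of weak$^*$-continuous adjoints along sequences remain controlled, via Lemma~\ref{lemmaC-NA1}. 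It satisfies $0\in\overline K^{\wot}$. A gliding-hump/diagonal argument on the compact parts should give $0\notin\overline{\co}^{\|\cdot\|}(K)$, using that any convex combination of tails is still at distance at least $\delta$ from $\mathcal K\ni0$. The separability of $Y^*$ is what allows working with sequences throughout, via Lemma~\ref{lemmaC-NA1}.

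\textbf{Step 3 (property $\propertystar$ contradicts $\NA_1=\mathcal L$).} This is the step I expect to be the main obstacle. Given $K$ as in Step~2, I separate $0$ from $\overline{\co}^{\|\cdot\|}(K)$ in the Banach space $\weakstar$. The WOT-dual of $\weakstar$ is spanned by the functionals $A\mapsto x^{**}(Ay^*)$; on adjoints these reduce, when $x^{**}\in X$, to $T\mapsto y^*(Tx)$. The idea, as in \cite{DJM}, is to build from $K$ an operator in $\weakstar$ whose norm is only approached along a WOT-null sequence. Concretely I would try $A=A_0+\sum_n c_n(\text{elements of }K)$, with rapidly decreasing $c_n$, chosen so that any maximizing point would have to see a nonzero WOT limit of $K$-elements. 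Then I apply James' theorem to the space $\weakstar$ with norming set $\{x^{**}\otimes y^*\}$ to produce a non-norm-attaining element. The delicate point is to make this element an adjoint $T_0^*$ with $T_0\in\mathcal L(X,Y)$, and to ensure its failure to attain the norm happens on $Y^*$ rather than only on $Y^{***}$. I would first try to carry out the Holub/DJM construction literally inside $\weakstar$, replacing SOT on $\mathcal L(X,Y)$ by SOT on $Y^*$ throughout. This yields $T_0\notin\NA_1(X,Y)$, contradicting the hypothesis, so every operator is compact.
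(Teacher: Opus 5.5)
\textbf{Steps 1 and 2 are sound; Step 3 is a genuine gap.}

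Your Steps 1 and 2 follow the paper's route. Three ingredients give a bounded sequence of finite-rank adjoints $R_n^*\to T^*$ in SOT: the $\lambda$-BAP of $(Y^*,X^*)$, Johnson's local reflexivity (Lemma~\ref{lem:PLR_Johnson}), and the separability of $Y^*$. Then $K=\{T^*-R_n^*\colon n\in\mathbb N\}$ witnesses property $\propertystar$. This is just the contrapositive of Lemma~\ref{lemmaB-NA1} applied to the ball of compact operators in $\weakstar$. Two small corrections:
\begin{itemize}
\item $K$ is relatively WOT compact simply because $K\cup\{0\}$ is a WOT-convergent sequence together with its limit; Lemma~\ref{lemmaC-NA1} plays no role here.
\item No gliding hump is needed for $0\notin\overline{\co}^{\|\cdot\|}(K)$. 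Every convex combination equals $T^*$ minus a compact operator, so it has norm at least $\mathrm{dist}(T^*,\mathcal K(Y^*,X^*))>0$.
\end{itemize}

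The gap is Step 3, which is the whole content of the paper's Lemma~\ref{lemmaA-NA1}, and which you only announce.
\begin{itemize}
\item ``Apply James' theorem to $\weakstar$ with norming set $\{x^{**}\otimes y^*\}$'' is not a valid move. James' theorem concerns attainment of functionals on the unit ball. It says nothing directly about attainment on an arbitrary norming set.
\item The suggested $A=A_0+\sum_n c_nk_n$, with no specified $A_0$, coefficients or convex blocks, is not yet a construction. Making such a sum fail to attain its supremum is precisely the delicate content of Simons' inequality.
\end{itemize}

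The missing idea is to argue with boundaries.
\begin{itemize}
\item Under $\NA_1(X,Y)=\mathcal L(X,Y)$, every $T^*$ attains its norm at some $y^*\in S_{Y^*}$. Hence $B=\{y^*\otimes x^{**}\colon y^*\in S_{Y^*},\,x^{**}\in S_{X^{**}}\}$ is a James boundary of the Banach space $\weakstar$.
\item The topology $\sigma(\weakstar,B)$ is exactly the WOT.
\item The bounded WOT-null sequence $(T^*-R_n^*)$ is therefore weakly null, by the Rainwater--Simons theorem. For a general relatively WOT-compact $K$ one uses Pfitzner's theorem, which is what the paper invokes.
\item Mazur's theorem then gives $0\in\overline{\co}^{\|\cdot\|}(K)$, contradicting Step 2.
\end{itemize}

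In this formulation your two ``delicate points'' disappear. Since $\weakstar$ is norm closed, everything stays an adjoint. Attaining $\sup_B$ is the same as $T_0^*$ attaining its norm on $S_{Y^*}$, because $x^{**}$ can always be chosen to norm $T_0^*y^*$.
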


To prove Theorem \ref{theorem-holub-mujica-NA1}, we proceed through a sequence of lemmas.

\begin{lemma} \label{lemmaA-NA1} Let $X$ and $Y$ be Banach spaces. If $\NA_1(X,Y) = \mathcal{L}(X,Y)$ then $(X,Y)$ fails property $\propertystar$.
\end{lemma}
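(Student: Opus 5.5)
We argue by contraposition, following the proof of the analogous step in \cite{DJM}, with $\mathcal{L}(X,Y)$ replaced by $\weakstar$. Recall that $\weakstar$ is exactly $\{T^*\colon T\in\mathcal{L}(X,Y)\}$, and that $T\mapsto T^*$ is a surjective linear isometry from $\mathcal{L}(X,Y)$ onto $\weakstar$. So assume that $(X,Y)$ has property $\propertystar$, witnessed by a relatively $\wot$-compact set $K\subseteq\weakstar$ with $0\in\overline{K}^{\wot}$ and $0\notin\overline{\co}^{\|\cdot\|}(K)$. We shall produce $T\in\mathcal{L}(X,Y)$ whose adjoint does not attain its norm.

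\emph{Step 1 (separation).} Since $\overline{\co}^{\|\cdot\|}(K)$ is a norm-closed convex set not containing $0$, Hahn--Banach gives a functional $\Lambda\in(\weakstar)^*$ with $\|\Lambda\|\le 1$ and $\Lambda(S)\geq\delta>0$ for every $S\in K$. A general norm-continuous functional is not $\wot$-continuous, and the hypothesis $0\in\overline{K}^{\wot}$ will only be useful through the relative $\wot$-compactness of $K$. The plan is the one used in \cite{DJM}: transfer $\Lambda$ to a single operator. Concretely, we use $K$ to build an operator $R\in\weakstar$ and a point $y^{**}$ (or a net of points in $B_{Y^*}$) such that $\|R\|$ is approached along $K$ but never attained, with the $\wot$-accumulation at $0$ killing every candidate norming point.

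\emph{Step 2 (construction).} This is the step I expect to be the main obstacle: adapting the \cite{DJM} construction so that it stays inside $\weakstar$. I would first try the following. Choose a sequence, or a net $(S_i)\subseteq K$, with $S_i\to0$ in $\wot$. Using $\Lambda$, extract finitely many $S_i$ and positive weights whose convex combination has norm controlled from below by $\delta$, in the spirit of a gliding-hump or James-type argument. Then assemble the pieces into an infinite convex combination $R=\sum_k c_kS_{i_k}$. This series converges in norm, so $R\in\weakstar$, since that space is norm closed. The weights are chosen as in James' non-attainment construction: $\|R\|$ is approximated by evaluating $R$ at points $y_k^*$ adapted to $S_{i_k}$, but the $\wot$-nullness of the tail makes $\|Ry^*\|$ strictly smaller than $\|R\|$ for every $y^*\in B_{Y^*}$. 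If the direct series fails, the fallback is to mirror exactly the proof in \cite{DJM}, where failure of (P) is derived from $\NA=\mathcal{L}$ via relative weak compactness and a Krein--Šmulian/James argument. In that argument one replaces the evaluation map $T\mapsto Tx$, $x\in B_X$, by $R\mapsto Ry^*$, $y^*\in B_{Y^*}$. This replacement is legitimate because norm attainment of $T^*$ is by definition attainment on $B_{Y^*}$, and $\wot$ on $\weakstar$ is generated by the functionals $R\mapsto (Ry^*)(x^{**})$.

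\emph{Step 3 (conclusion).} With $R\in\weakstar$ and $\|Ry^*\|<\|R\|$ for all $y^*\in B_{Y^*}$, write $R=T^*$ for some $T\in\mathcal{L}(X,Y)$. Then $T\notin\NA_1(X,Y)$, which contradicts $\NA_1(X,Y)=\mathcal{L}(X,Y)$. Hence $(X,Y)$ fails property $\propertystar$.
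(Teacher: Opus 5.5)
Your proposal has a genuine gap: Step~2, which you yourself flag as the main obstacle, contains all the content of the lemma, and it is never carried out. Nothing in the sketch explains why an infinite convex combination $\sum_k c_k S_{i_k}$ of elements of $K$ should fail to attain its norm on $B_{Y^*}$. WOT-nullness of a tail gives no control on norms. Nor can you, without already knowing some form of weak compactness, extract a WOT-null \emph{sequence} from $K$: the WOT is not metrizable, and $\overline{K}^{\wot}$ is not known a priori to be angelic.

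Your fallback (Krein--\v{S}mulian plus James' theorem) does not close the gap either. James' theorem requires every functional in $(\weakstar)^*$ to attain its supremum on the set in question. The hypothesis $\NA_1(X,Y)=\mathcal{L}(X,Y)$ only says that each $R=T^*$ attains its norm at some functional of the special form $R\mapsto x^{**}(Ry^*)$, with $y^*\in S_{Y^*}$ and $x^{**}\in S_{X^{**}}$. A hand-made gliding-hump construction of a non-attaining $R$ from a $\sigma$-compact but non-weakly-compact set would amount to reproving a deep theorem in this setting.

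The missing idea is to read that hypothesis as saying that
\[
B=\{y^*\otimes x^{**}\colon y^*\in S_{Y^*},\ x^{**}\in S_{X^{**}}\}
\]
is a James boundary of $\weakstar$. As you correctly observed, $\sigma(\weakstar,B)$ is exactly the WOT. Then invoke Pfitzner's theorem (the solution of Godefroy's boundary problem): bounded $\sigma(\weakstar,B)$-compact sets are weakly compact. With this, no bad operator needs to be constructed, and the separation in your Step~1 is unnecessary.

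The argument runs as follows. Suppose $K$ witnesses $(P^\star)$. Then $\overline{K}^{\wot}$ is WOT-compact, and it is norm bounded by two applications of the uniform boundedness principle, so it is weakly compact. On a weakly compact set, the weak topology and the coarser Hausdorff WOT coincide. Hence $0\in\overline{K}^{w}\subseteq\overline{\co}^{w}(K)=\overline{\co}^{\|\cdot\|}(K)$ by Mazur's theorem, which is a contradiction.

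If you only need WOT-convergent sequences, as in the later application in Lemma~\ref{lemmaB-NA1}, the more elementary Rainwater--Simons theorem for James boundaries would suffice. The lemma as stated, however, concerns arbitrary relatively WOT-compact $K$.
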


\begin{proof} Suppose that $\NA_1(X,Y) = \mathcal{L}(X,Y)$. Then, the set 
\begin{equation*}
B : = \{y^* \otimes x^{**}\colon y^* \in S_{Y^*}, x^{**} \in S_{X^{**}}\}
\end{equation*}
is a James boundary for $\weakstar$. 

Notice that, on $\weakstar$, the topology $\sigma(\weakstar, B)$ coincides with the WOT topology. Suppose, for contradiction, that there exists $K \subseteq \weakstar$ such that 
\begin{equation*}
0 \in \overline{K}^{\wot} \ \ \ \mbox{and} \ \ \ 0 \not\in \overline{\co}^{\|\cdot\|}(K).
\end{equation*}
Since 
\begin{equation*}
\overline{K}^{\mathrm {WOT}} = \overline{K}^{\sigma(\weakstar, B)}
\end{equation*}
is WOT compact, by a theorem of Pfitzner (see \cite{Pf} or \cite[Theorem 3.121]{FHHMZ}), $\overline{K}^{\mathrm {WOT}}$ is weakly compact. Thus, the weak and WOT topologies coincide on $\overline{K}^{\mathrm{WOT}}$. Hence $0 \in \overline{K}^{\mathrm {WOT}} \subseteq \overline{K}^w$ which implies that $0 \in \overline{\co}^{\|\cdot\|}(K)$, a contradiction.
\end{proof}

The failure of property $(P^\star)$ has important consequences for the closure behavior of convex subsets of $\mathcal L_{w^*,w^*}(Y^*,X^*)$.

\begin{lemma} \label{lemmaB-NA1} Let $X$ and $Y$ be Banach spaces. If $(X, Y)$ fails property $\propertystar$, then every norm-closed convex set $C \subseteq \weakstar$ is WOT sequentially closed in $\weakstar$. That is, 
\[
\overline{C}^{\mathrm{WOT-seq}} \cap \weakstar = C.
\]
\end{lemma}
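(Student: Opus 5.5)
We argue by contraposition, in the spirit of the analogous step in \cite{DJM}. Let $C \subseteq \weakstar$ be norm-closed and convex, and suppose there is a sequence $(T_n) \subseteq C$ with $T_n \to T$ in the WOT, where $T \in \weakstar$ but $T \notin C$. We will produce a set $K$ witnessing property $\propertystar$, contradicting the hypothesis.

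First, separate $T$ from $C$. Since $C$ is norm-closed and convex and $T\notin C$, the Hahn--Banach theorem gives a functional $\Psi$ in the norm dual of $\weakstar$ and $\varepsilon>0$ with $\Psi(S) - \Psi(T) \geq \varepsilon$ for every $S \in C$. Equivalently, the set $C - T$ has distance at least $\varepsilon/\|\Psi\|$ from $0$, so $0 \notin \overline{\co}^{\|\cdot\|}(C-T) = C - T$.

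Next, set $K := \{T_n - T \colon n\in\mathbb N\} \subseteq \weakstar$; this uses that $\weakstar$ is a linear subspace. Then $K \subseteq C - T$, so $\overline{\co}^{\|\cdot\|}(K) \subseteq C-T$, and hence $0 \notin \overline{\co}^{\|\cdot\|}(K)$. On the other hand, $T_n - T \to 0$ in the WOT, so $0 \in \overline{K}^{\wot}$.

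It remains to check that $K$ is relatively WOT compact in $\weakstar$; this is the only delicate point. A WOT-convergent sequence is pointwise bounded in the weak sense, so by the uniform boundedness principle, applied twice (first for each fixed $y^*$ to the vectors $T_n y^*\in X^*$, then to the operators), $\sup_n \|T_n\| < \infty$. The set $K \cup \{0\}$ is WOT compact, since it is a convergent sequence together with its limit, and it lies inside $\weakstar$. Therefore $K$ is relatively WOT compact with WOT closure $K\cup\{0\}\subseteq \weakstar$. This closure stays inside the subspace, which is exactly why the definition of $\propertystar$ requires the closure to be taken in $\weakstar$ and why the statement intersects with $\weakstar$.

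Thus $(X,Y)$ has property $\propertystar$, a contradiction. We conclude that $\overline{C}^{\mathrm{WOT-seq}} \cap \weakstar = C$.
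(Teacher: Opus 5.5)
Your proof is correct and follows the paper's argument: take $K=\{T_n-T\colon n\in\mathbb N\}$, note $0\in\overline{K}^{\mathrm{WOT}}$, and use $\overline{\co}^{\|\cdot\|}(K)\subseteq C-T\not\ni 0$ to contradict the failure of property $(P^\star)$. The Hahn--Banach separation and the uniform boundedness step are unnecessary: norm-closedness and convexity of $C-T$ already give $0\notin\overline{\co}^{\|\cdot\|}(K)$, and a convergent sequence together with its limit is compact in any topology. Your explicit justification of relative WOT compactness is nonetheless a useful detail that the paper leaves implicit.
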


\begin{proof} Suppose that $(R_n) \subseteq C$ and $R \in \weakstar$ such that $R_n \xrightarrow{\mathrm {WOT}} R$, but $R \not\in C$. Define 
	\begin{equation*}
	K:= \{ R_n - R\colon n \in \N\}.
	\end{equation*}
Then, $K$ is a relatively WOT compact subset of $\weakstar$ and $0 \in \overline{K}^{\mathrm{WOT}}$. However, if $0 \in \overline{\co}^{\|\cdot\|}(K)$, then this would imply that $R \in \overline{\co}^{\|\cdot\|} \{R_n\colon n \in \N\} \subseteq C$, which is a contradiction.
\end{proof}

To upgrade sequential closure to full closure, we use the separability assumption on $Y^*$. To this end, we state and briefly prove the following general fact. 

\begin{lemma}\label{lem:general_metrizable_SOT}
    Let $E$ be a separable Banach space, let $F$ be a Banach
space, and let $\mathscr M$ be a linear subspace of $\mathcal L(E,F)$.
On every norm-bounded subset of $\mathscr M$, the strong operator
topology is metrizable. Consequently, relative SOT sequential
closedness and relative SOT closedness coincide on bounded subsets
of $\mathscr M$.
\end{lemma}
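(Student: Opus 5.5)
The plan is to write down an explicit metric on bounded subsets of $\mathscr M$ using a countable dense subset of $E$. Fix a bounded set $\mathcal B\subseteq\mathscr M$, say $\|R\|\leq M$ for every $R\in\mathcal B$. Choose a countable dense set $\{e_k\colon k\in\N\}\subseteq E$; this exists because $E$ is separable. On $\mathscr M$ define
\[
d(R,S):=\sum_{k=1}^\infty 2^{-k}\min\{1,\|(R-S)e_k\|\}.
\]
This is a translation-invariant pseudometric. It is a genuine metric on $\mathscr M$: if $d(R,S)=0$, then $R-S$ vanishes on a dense set, and hence vanishes everywhere by continuity.

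The main step, which is really the only point that needs any care, is to show that on $\mathcal B$ the topology induced by $d$ agrees with the relative SOT. The inclusion "$d$-open implies SOT-open" is clear. Each map $R\mapsto\|(R-S)e_k\|$ is SOT-continuous, and the series defining $d$ converges uniformly. Hence $d(\cdot,S)$ is SOT-continuous, so $d$-balls are SOT-open.

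For the converse, take $S\in\mathcal B$ and a basic SOT-neighborhood
\[
\{R\in\mathcal B\colon \|(R-S)x_i\|<\varepsilon,\ i=1,\dots,m\}.
\]
For each $i$, pick $e_{k_i}$ with $\|x_i-e_{k_i}\|<\varepsilon/(4M)$. Then for $R\in\mathcal B$ we have
\[
\|(R-S)x_i\|\leq \|(R-S)e_{k_i}\|+2M\|x_i-e_{k_i}\|<\|(R-S)e_{k_i}\|+\varepsilon/2.
\]
Now choose $\delta>0$ so small that $d(R,S)<\delta$ forces $\|(R-S)e_{k_i}\|<\varepsilon/2$ for every $i$. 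For instance, one can take $\delta<2^{-\max_i k_i}\min\{1,\varepsilon/2\}$. With this choice, the $d$-ball of radius $\delta$ around $S$ lies inside the given neighborhood. It is exactly here that boundedness is used, through the uniform constant $M$.

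Once SOT on $\mathcal B$ is known to be metrizable, the consequence follows at once, since in metrizable spaces closure equals sequential closure. Concretely, suppose $C\subseteq\mathcal B$ is relatively SOT sequentially closed and $R\in\mathcal B$ lies in the relative SOT closure of $C$. Metrizability gives a sequence in $C$ converging in SOT to $R$, and so $R\in C$. The converse implication, that relatively SOT closed sets are sequentially closed, is trivial.
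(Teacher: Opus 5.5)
Your proof is correct and uses the same approach as the paper, which takes exactly this metric $d(S,T)=\sum_n 2^{-n}\min\{1,\|(S-T)x_n\|\}$ (built from a sequence dense in $B_E$ rather than in $E$, which makes no difference) and simply asserts that it induces the SOT on bounded subsets of $\mathscr M$. You supply the verification the paper leaves out, including the use of the uniform bound $M$ to pass from the dense set to arbitrary vectors. For the way the lemma is applied later, where a bounded $C$ is sequentially closed in all of $\mathscr M$, run your final argument with $\mathcal B = C\cup\{R\}$.
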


\begin{proof}
    Let $(x_n)$ be dense in $B_E$. On a norm-bounded subset of $\mathscr M$, the metric 
    \[
    d(S,T) := \sum_{n=1}^\infty \frac{\min\{1,\|(S-T)x_n\|\}}{2^n}
    \]
    induces the strong operator topology. 
\end{proof}

\begin{lemma} \label{lemmaC-NA1} Let $X, Y$ be Banach spaces and assume that $Y^*$ is separable. If a bounded subset $C \subseteq \weakstar$ is SOT sequentially closed in $\weakstar$, then $C$ is SOT closed in $\weakstar$.
\end{lemma}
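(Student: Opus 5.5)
This is a direct application of Lemma~\ref{lem:general_metrizable_SOT}, with $E=Y^*$, $F=X^*$ and $\mathscr M=\weakstar$, regarded as a linear subspace of $\mathcal L(Y^*,X^*)$. Since $Y^*$ is separable, the lemma says that the strong operator topology is metrizable on every norm-bounded subset of $\weakstar$. The plan is to work inside a fixed bounded set. Put $M:=\sup_{R\in C}\|R\|<\infty$, and let $\mathcal B:=\{R\in\weakstar\colon \|R\|\leq M\}$. Then $C\subseteq\mathcal B$.

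Let $R\in\weakstar$ lie in the SOT closure of $C$. The goal is to show $R\in C$.

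\textbf{Step 1.} The first step is to check that $R\in\mathcal B$, so that the whole argument takes place in a metrizable set. Take a net $(R_\gamma)\subseteq C$ with $R_\gamma\to R$ in SOT. For each $y^*\in B_{Y^*}$,
\[
\|Ry^*\|=\lim_\gamma\|R_\gamma y^*\|\leq M.
\]
Hence $\|R\|\leq M$, and $R\in\mathcal B$.

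\textbf{Step 2.} The SOT topology restricted to $\mathcal B$ is metrizable, with the metric $d$ from the proof of Lemma~\ref{lem:general_metrizable_SOT}. The relative topology on $C\cup\{R\}$ is just the restriction of the SOT, and $R$ is in the SOT closure of $C$ within $\mathcal B$. In a metric space closure points are limits of sequences, so there is a sequence $(R_n)\subseteq C$ with $d(R_n,R)\to0$. This is exactly $R_n\to R$ in SOT.

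\textbf{Step 3.} The limit $R$ belongs to $\weakstar$ and is the SOT limit of a sequence from $C$. By the hypothesis that $C$ is SOT sequentially closed in $\weakstar$, this gives $R\in C$. Hence $C$ is SOT closed in $\weakstar$.

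The only point needing care is Step~1. Without knowing that the limit stays norm-bounded, metrizability could not be applied to $C\cup\{R\}$. This is handled by the lower semicontinuity of the norm with respect to the SOT, as shown above. Everything else is a formal use of Lemma~\ref{lem:general_metrizable_SOT}.
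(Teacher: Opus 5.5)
Your proof is correct and matches the paper's argument, which simply applies Lemma~\ref{lem:general_metrizable_SOT} with $\mathscr M=\weakstar\subseteq\mathcal L(Y^*,X^*)$ and $E=Y^*$. Your Step~1 spells out a detail the paper leaves implicit in the ``consequently'' of that lemma: SOT closure points of $C$ stay in the ball of radius $\sup_{R\in C}\|R\|$, because the norm is SOT lower semicontinuous.
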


\begin{proof}
    Apply Lemma \ref{lem:general_metrizable_SOT} to $\mathscr M = \weakstar \subseteq \mathcal{L}(Y^*, X^*)$ and $E=Y^*$.
\end{proof}

The remaining ingredient is a weak$^*$ version of the Principle of Local Reflexivity, which allows finite-rank operators to be approximated by weak$^*$-weak$^*$ continuous finite-rank operators.

\begin{lemma}[\text{\cite[Lemma 1]{Johnson}}]\label{lem:PLR_Johnson}
    Let $X$ be a Banach space and $F$ be a finite-dimensional Banach space. If $A \subset X^*$ is a finite set, $S\colon X^* \to F$ is a bounded linear operator, and $\varepsilon>0$, then there exists a weak$^*$ to weak$^*$ continuous linear operator $T\colon X^* \to F$ such that $Tx^* =Sx^* $ for all $x^* \in A$ and $\|T\| \leq \|S\|+\varepsilon$.
\end{lemma}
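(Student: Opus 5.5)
We plan to deduce the lemma from the classical Principle of Local Reflexivity, by transporting the finite-rank operator $S$ through its adjoint. Since $F$ is finite-dimensional, $F^{**}=F$, and we view $S$ through $S^*\colon F^*\to X^{**}$. Because $F^*$ is finite-dimensional, the range $E:=S^*(F^*)$ is a finite-dimensional subspace of $X^{**}$. Moreover, $S$ is recovered from $S^*$ by the formula
\[
(Sx^*)(f^*)=(S^*f^*)(x^*)\qquad (x^*\in X^*,\ f^*\in F^*).
\]
Weak$^*$-continuity of an operator defined by such a formula is exactly the statement that the corresponding map $F^*\to X^{**}$ takes values in $X$. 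So the task reduces to replacing $S^*$ by a map into $X$ without changing its action on $A$ and with almost the same norm.

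Fix $\delta>0$ with $\delta\|S\|<\varepsilon$, and put $G:=\spann A$, a finite-dimensional subspace of $X^*$. The Principle of Local Reflexivity yields a linear operator $R\colon E\to X$ with $\|R\|\leq 1+\delta$ such that
\[
x^*(Rx^{**})=x^{**}(x^*)\qquad (x^{**}\in E,\ x^*\in G).
\]
Now define $T\colon X^*\to F$ by
\[
(Tx^*)(f^*):=x^*(RS^*f^*)\qquad (x^*\in X^*,\ f^*\in F^*).
\]
Here $F$ is identified with $F^{**}$, so $Tx^*$ is a well-defined element of $F$.

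Three verifications remain.
\begin{enumerate}
\item \emph{Weak$^*$-continuity.} Fix a basis $(f_i^*)$ of $F^*$ with dual basis $(f_i)$ in $F$. Then $Tx^*=\sum_i x^*(RS^*f_i^*)\,f_i$, and each coefficient $x^*\mapsto x^*(RS^*f_i^*)$ is evaluation at a point of $X$. Hence each coefficient is weak$^*$-continuous, and so is $T$ as a map into the finite-dimensional space $F$.
\item \emph{Agreement on $A$.} For $x^*\in A\subseteq G$, the local reflexivity identity gives $(Tx^*)(f^*)=(S^*f^*)(x^*)=(Sx^*)(f^*)$ for every $f^*\in F^*$. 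Thus $Tx^*=Sx^*$.
\item \emph{Norm bound.} We have
\[
\|T\|=\sup_{\|x^*\|\leq1,\ \|f^*\|\leq1}\bigl|x^*(RS^*f^*)\bigr|=\|RS^*\|\leq(1+\delta)\|S\|<\|S\|+\varepsilon .
\]
\end{enumerate}

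The only potential obstacle is bookkeeping rather than substance. First, the local reflexivity operator must be applied on the range $E$ of $S^*$ (not on some coordinate representation), so that the norm estimate is exactly $\|RS^*\|$ and no basis constants of $F$ enter. Second, $G$ must be chosen to contain $A$, so that equality (and not just approximate equality) holds on $A$. If $S=0$ the statement is trivial with $T=0$, so we may assume $\|S\|>0$ when choosing $\delta$.
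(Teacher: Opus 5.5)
Your proof is correct. You apply the Principle of Local Reflexivity to $E=S^*(F^*)$ and to the span of $A$, then set $T=(RS^*)^*\colon X^*\to F^{**}=F$; this is the standard argument behind Johnson's Lemma~1, which the paper does not reprove but cites and describes as a weak$^*$ version of the Principle of Local Reflexivity, and your three verifications (weak$^*$-continuity since $T$ is an adjoint, exact agreement on $A$, and $\|T\|=\|RS^*\|\leq(1+\delta)\|S\|$) are all in order.
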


\begin{lemma} \label{Holub-NA1-Lemma5} Let $X$ and $Y$ be Banach spaces. Then,
\begin{equation*}
    B_{\mathcal{F}(Y^*, X^*)} \subseteq \overline{B_{\weakstarfiniterank}}^{\tau_c}. 
\end{equation*}
\end{lemma}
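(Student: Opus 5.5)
Given $S\in B_{\mathcal F(Y^*,X^*)}$, we need a net $(T_\gamma)$ of weak$^*$-to-weak$^*$ continuous finite-rank operators in the closed unit ball of $\weakstarfiniterank$ with $T_\gamma\to S$ uniformly on compact subsets of $Y^*$. The plan is to use Lemma~\ref{lem:PLR_Johnson} with $X$ replaced by $Y$ (so that $Y^*$ plays the role of $X^*$) and $F:=S(Y^*)\subseteq X^*$, which is finite-dimensional. The approximants are obtained first on finite sets and then extended to compact sets by a standard equicontinuity argument.

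Fix a compact set $K\subseteq Y^*$ and $\varepsilon>0$. First, choose a finite $\varepsilon$-net $A\subseteq K$. Lemma~\ref{lem:PLR_Johnson} then gives a weak$^*$-to-weak$^*$ continuous operator $T_0\colon Y^*\to F$ with $T_0=S$ on $A$ and $\|T_0\|\le \|S\|+\varepsilon\le 1+\varepsilon$. Here weak$^*$ continuity is with respect to $F$ viewed as a finite-dimensional space. Since all Hausdorff vector topologies on $F$ coincide, and since the weak$^*$ topology of $X^*$ restricted to $F$ is Hausdorff, the map $T_0$ remains weak$^*$-to-weak$^*$ continuous as a map into $X^*$. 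Thus $T_0\in\weakstarfiniterank$.

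Next, normalize by setting $T:=T_0/(1+\varepsilon)$, which lies in $B_{\weakstarfiniterank}$. For $y^*\in K$, pick $a\in A$ with $\|y^*-a\|\le\varepsilon$. Then
\[
\|Ty^*-Sy^*\|\le \|T(y^*-a)\|+\|Ta-Sa\|+\|S(a-y^*)\|\le \varepsilon+\frac{\varepsilon}{1+\varepsilon}\|Sa\|+\varepsilon\le 3\varepsilon\,\sup_{k\in K}(1+\|k\|).
\]
Since $\varepsilon>0$ and the compact set $K$ were arbitrary, indexing by pairs $(K,\varepsilon)$ produces a net in $B_{\weakstarfiniterank}$ converging to $S$ in $\tau_c$, which proves the inclusion.

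The main point requiring care is the identification of weak$^*$ continuity into $F$ with weak$^*$ continuity into $X^*$. Lemma~\ref{lem:PLR_Johnson} only supplies continuity for the unique linear Hausdorff topology on the finite-dimensional space $F$, and we must check that this agrees with the relative weak$^*$ topology of $X^*$ on $F$; this holds because both are Hausdorff vector topologies on a finite-dimensional space. The other small subtlety is the norm slack $1+\varepsilon$, which the rescaling step handles.
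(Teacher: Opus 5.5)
Your proposal is correct and takes essentially the same approach as the paper: a finite net of the compact set, Johnson's local reflexivity lemma (Lemma~\ref{lem:PLR_Johnson}) applied with range $S(Y^*)$, and rescaling by $(1+\varepsilon)^{-1}$. You also write out the triangle-inequality estimate and check that weak$^*$-to-norm continuity into the finite-dimensional range gives weak$^*$-to-weak$^*$ continuity into $X^*$, two details the paper leaves implicit.
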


\begin{proof} Let $T \in \mathcal{F}(Y^*, X^*)$ with $\|T\| \leq 1$ be given. Let $K \subseteq Y^*$ be compact and let $\eps > 0$ be given. Choose $\delta > 0$ sufficiently small. Take $A \subseteq K$ to be a finite $\delta$-net. By the Principle of Local Reflexivity (Lemma \ref{lem:PLR_Johnson}), there is a $w^*$-$w^*$ continuous $R\colon Y^* \to T(Y^*) \subseteq X^*$ such that $R\restricted_A = T\restricted_A$ and $\|R\| \leq \|T\| + \delta \leq 1 + \delta$. Notice now that $(1+\delta)^{-1} R \in B_{\weakstarfiniterank}$ and $(1 + \delta)^{-1}R$ is arbitrarily close to $T$ on $K$. This finishes the proof.
\end{proof}
	
	We now have all the ingredients needed for the proof of Theorem \ref{theorem-holub-mujica-NA1}. 
    
\begin{proof}[Proof of Theorem \ref{theorem-holub-mujica-NA1}] Assume that $Y^*$ is separable, that the pair $(Y^*,X^*)$ has the $\lambda$-BAP and also that $\NA_1(X, Y) = \mathcal{L}(X,Y)$. By Lemma \ref{lemmaA-NA1}, $(X,Y)$ fails property $\propertystar$. It follows from Lemma \ref{lemmaB-NA1} and Lemma \ref{lemmaC-NA1} that $B_{\mathcal{K}_{w^*,w^*} (Y^*,X^*)}$ is SOT closed in $\weakstar$, i.e., 
\begin{equation}\label{eq:chain-1}
\overline{B_{\mathcal{K}_{w^*,w^*} (Y^*,X^*)}}^{\mathrm{SOT}} \cap \weakstar = B_{\mathcal{K}_{w^*,w^*} (Y^*,X^*)}.    
\end{equation}
 Since the pair $(Y^*,X^*)$ has the $\lambda$-BAP, we have
	\begin{equation}\label{eq:chain0}
 B_{ \mathcal{L} (Y^*,X^*) } \subseteq \lambda \overline{ B_{\mathcal{F} (Y^*,X^*)} }^{\tau_c}.
 \end{equation}
 It follows from the Principle of Local Reflexivity (Lemma \ref{Holub-NA1-Lemma5}) that 
	\begin{equation}\label{eq:chain1}
	 \lambda \overline{ B_{\mathcal{F} (Y^*,X^*)} }^{\tau_c} \subseteq \lambda \overline{ B_{\mathcal{F}_{w^*,w^*} (Y^*,X^*)} }^{\tau_c} \subseteq \lambda \overline{ B_{\mathcal{K}_{w^*,w^*} (Y^*,X^*)} }^{\tau_c}.
	\end{equation}
    Combining \eqref{eq:chain-1}, \eqref{eq:chain0}, and \eqref{eq:chain1}, we have 
	\begin{align*}
	B_{\weakstar} &\subseteq \lambda \overline{ B_{\mathcal{K}_{w^*,w^*} (Y^*,X^*)} }^{\tau_c}  \cap \weakstar \\
    &\subseteq   \lambda \overline{ B_{\mathcal{K}_{w^*,w^*} (Y^*,X^*)} }^{\mathrm{SOT}} \cap \weakstar = \lambda B_{\mathcal{K}_{w^*,w^*} (Y^*,X^*)}.     
	\end{align*}
	This proves that $\mathcal{L}(X,Y)=\mathcal{K}(X,Y)$.
\end{proof}

\subsection{The case of \texorpdfstring{$\NA_2$}{NA2}}

 The purpose of this subsection is to prove an $\NA_2$-version of Theorem \ref{theorem-holub-mujica-NA1}. The conclusion is not completely analogous to the $\NA_1$ case: while Theorem \ref{theorem-holub-mujica-NA1} forces all operators from $X$ into $Y$ to be compact, the second-adjoint argument yields compactness only for weakly compact operators. This difference comes from the role of the second dual. The relevant ambient space is now the space of second adjoints and Oja's principle of local reflexivity (see Theorem \ref{thm:PLR} below) allows us to approximate finite-rank operators from $X^{**}$ into $Y$, but not arbitrary finite-rank operators from $X^{**}$ into $Y^{**}$. Thus the argument applies precisely to those operators $S\colon X \rightarrow Y$ satisfying $S^{**}(X^{**}) \subseteq Y$, that is, to weakly compact operators. With this modification, the proof follows the same general strategy as in the previous subsection.

\begin{theorem} \label{Holub-for-NA2} Let $X$ and $Y$ be Banach spaces, and suppose that $X^{**}$ is separable, and that the pair $(X^{**},Y)$ has the $\lambda$-BAP. If $\NA_2(X, Y) = \mathcal{L}(X, Y)$, then $\mathcal{W}(X,Y)=\mathcal{K}(X,Y)$.
\end{theorem}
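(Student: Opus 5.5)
The plan mirrors the proof of Theorem~\ref{theorem-holub-mujica-NA1}. The ambient space is now
\[
\mathcal{L}^{(2)}_{w^*,w^*}(X,Y):=\{S^{**}\colon S\in\mathcal L(X,Y)\}\subseteq \mathcal L(X^{**},Y^{**}).
\]
It is exactly the space of weak$^*$-to-weak$^*$ continuous operators from $X^{**}$ into $Y^{**}$, and it is norm closed.

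\emph{Step 1: a James boundary.} If $\NA_2(X,Y)=\mathcal L(X,Y)$, then
\[
B:=\{y^{***}\otimes x^{**}\colon y^{***}\in S_{Y^{***}},\ x^{**}\in S_{X^{**}}\}
\]
is a James boundary for $\mathcal L^{(2)}_{w^*,w^*}(X,Y)$.

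\emph{Step 2: failure of a property $(P^{\star\star})$.} Define $(P^{\star\star})$ as in Definition~\ref{definition-property-star}, with this ambient space and its WOT. Here WOT means the topology induced by $B$, i.e.\ pointwise weak convergence into $Y^{**}$. Pfitzner's theorem then shows, exactly as in Lemma~\ref{lemmaA-NA1}, that $(X,Y)$ fails $(P^{\star\star})$.

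\emph{Step 3: WOT-sequential closedness.} As in Lemma~\ref{lemmaB-NA1}, every norm-closed convex subset of the ambient space is WOT-sequentially closed in it. In particular this applies to $C:=\{S^{**}\colon S\in \mathcal K(X,Y),\ \|S\|\le 1\}$.

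\emph{Step 4: SOT closedness.} SOT-sequential closedness implies WOT-sequential closedness in the other direction, so $C$ is SOT-sequentially closed. Since $X^{**}$ is separable, Lemma~\ref{lem:general_metrizable_SOT} with $E=X^{**}$ gives that $C$ is SOT closed in the ambient space.

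\emph{Step 5: approximation.} Let $S\in\mathcal W(X,Y)$ with $\|S\|\le1$. Then $S^{**}\in\mathcal L(X^{**},Y)$. By the $\lambda$-BAP of $(X^{**},Y)$ we get
\[
S^{**}\in\lambda\overline{B_{\mathcal F(X^{**},Y)}}^{\tau_c}.
\]
Each finite-rank $R\colon X^{**}\to Y$ with $\|R\|\le 1$ must now be approximated in $\tau_c$ by operators $R_\delta^{**}$ with $R_\delta\in\mathcal F(X,Y)$ and $\|R_\delta\|\le 1+\delta$. I would use Oja's principle of local reflexivity (Theorem~\ref{thm:PLR}) in the following form.
\begin{itemize}
\item Fix a compact $K\subseteq X^{**}$ and a finite $\delta$-net $A\subseteq K$.
\item Let $F=R(X^{**})\subseteq Y$, which is finite-dimensional.
\item Find a weak$^*$-to-weak$^*$ continuous $R'\colon X^{**}\to F$ with $R'=R$ on $A$ and $\|R'\|\le 1+\delta$.
\end{itemize}
This is Lemma~\ref{lem:PLR_Johnson} applied to the space $X^*$: $R'$ is then of the form $R_\delta^{**}$ with $R_\delta=R'|_X\in\mathcal F(X,Y)$.

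\emph{Step 6: conclusion.} Steps 4 and 5 give
\[
S^{**}\in\lambda\,\overline{C}^{\mathrm{SOT}}\cap \mathcal L^{(2)}_{w^*,w^*}(X,Y)=\lambda C,
\]
because $\tau_c$-closure is contained in SOT-closure. Hence $S^{**}=T^{**}$ for some compact $T$. Since the map $T\mapsto T^{**}$ is injective, $S=T$ is compact.

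\textbf{Main obstacle.} I expect Step 5 to be the crux: producing the weak$^*$-continuous approximant with range still in $Y$ and exact agreement on $A$. This is precisely why only weakly compact $S$ are treated, since for them $S^{**}$ takes values in $Y$. Step 1 also needs care: verifying that $B$ really is a boundary requires $\|T^{**}\|$ to be attained at some $x^{**}$ and then normed by some $y^{***}$, which holds by Hahn--Banach.
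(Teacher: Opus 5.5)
Your proposal is correct and follows the paper's proof step by step. The paper uses the James boundary plus Pfitzner to show failure of $(P^{\star\star})$ (Lemma~\ref{lem:NA2-propertystar}), then WOT-sequential closedness of norm-closed convex sets (Lemma~\ref{lem:NA2-WOTseq}), then the upgrade to SOT-closedness via separability of $X^{**}$ (Lemma~\ref{lem:NA2-SOTseqclosed}), and finally the BAP of $(X^{**},Y)$ with local reflexivity and Gantmacher's theorem.

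The one genuine variation is Step 5. The paper gets Corollary~\ref{cor:PLR} from part (3) of Oja's Theorem~\ref{thm:PLR}, whereas you apply Johnson's Lemma~\ref{lem:PLR_Johnson} to the space $X^*$. That works. A $\sigma(X^{**},X^*)$-continuous operator $R'\colon X^{**}\to F\subseteq Y$ has the form $\sum_i x_i^*\otimes f_i$ with $x_i^*\in X^*$. Hence $R'=(R'|_X)^{**}$ and $\|R'|_X\|=\|R'\|$. So only the classical principle of local reflexivity, already used in the $\NA_1$ case, is needed, and Oja's refinement can be dispensed with. Remember to rescale by $(1+\delta)^{-1}$ so that the approximants lie in $\{T^{**}\colon T\in B_{\mathcal F(X,Y)}\}$.

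One side remark is false, though you never use it. $\mathcal{L}^{(2)}_{w^*,w^*}(X,Y)$ is not in general the space of all weak$^*$-to-weak$^*$ continuous operators $X^{**}\to Y^{**}$; those are all the adjoints $U^*$ with $U\in\mathcal{L}(Y^*,X^*)$. For example, take $X=Y=c_0$ and $\mathbf{1}=(1,1,\ldots)$. The operator $\zeta\mapsto \zeta_1\mathbf{1}$ on $\ell_\infty$ is weak$^*$-continuous but maps $e_1\in c_0$ outside $c_0$, so it is not a second adjoint. The correct description adds the requirement $T(X)\subseteq Y$. In Step 5 this holds automatically because $R'$ takes values in $F\subseteq Y$, so your argument is unaffected.
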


The main additional ingredient, compared with the proof of Theorem \ref{theorem-holub-mujica-NA1}, is a version of the Principle of Local Reflexivity adapted to second adjoints, due to Oja \cite[Theorems 1.2 and 1.3]{Oja}. We recall it here for the convenience of the reader.

\begin{theorem}[\text{Oja's Principle of Local Reflexivity \cite[Theorem 1.2 and Theorem 1.3]{Oja}}]\label{thm:PLR} Let $X$ and $Y$ be Banach spaces, and let $S \in \mathcal{F}(X^{**},Y^{**})$. If $E$ and $F$ are finite-dimensional subspaces of $X^{**}$ and $Y^*$, respectively, and $\varepsilon>0$, then there exists $T \in \mathcal{F}(X,Y)$ such that 
	\begin{enumerate}
		\itemsep0.25em
		\item $|\|T\|-\|S\||<\varepsilon$;
		\item $x^{**} (T^* y^*) = (Sx^{**} )(y^*)$ for all $x^{**} \in E$ and $y^* \in F$;
		\item $T^{**} x^{**} = S x^{**}$ for all $x^{**} \in E$ with $Sx^{**} \in Y$.
	\end{enumerate}
\end{theorem}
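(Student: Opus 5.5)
Since this is Oja's principle of local reflexivity, quoted from \cite{Oja}, I only sketch how I would derive it from the classical principle of local reflexivity and Lemma~\ref{lem:PLR_Johnson}. The construction has two steps. First I move the range of $S$ from $Y^{**}$ into $Y$. Then I make the resulting operator weak$^*$-to-weak$^*$ continuous on $X^{**}$, so that it is the second adjoint of an operator in $\mathcal F(X,Y)$.

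Fix $\delta>0$ small compared with $\varepsilon$, and assume $S\neq 0$. Before anything else I enlarge $E$ and $F$, which only strengthens (2) and (3). Add to $E$ a point $x_0^{**}\in S_{X^{**}}$ with $\|Sx_0^{**}\|>\|S\|-\delta$. Add to $F$ a point $y_0^*\in S_{Y^*}$ with $(Sx_0^{**})(y_0^*)>\|S\|-2\delta$.

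\emph{Step 1 (range into $Y$).} Put $G:=S(X^{**})$, a finite-dimensional subspace of $Y^{**}$. I apply the classical principle of local reflexivity to $G$ and $F$, in the Lindenstrauss--Rosenthal form with the Dean refinement. This gives an injective operator $P\colon G\to Y$ with the following properties:
\begin{itemize}
\item $\|P\|\le 1+\delta$;
\item $Pg=g$ for every $g\in G\cap Y$;
\item $y^*(Pg)=g(y^*)$ for every $g\in G$ and $y^*\in F$.
\end{itemize}
Set $S_1:=P\circ S\in\mathcal F(X^{**},Y)$. Then $\|S_1\|\le(1+\delta)\|S\|$. Moreover $(S_1x^{**})(y^*)=(Sx^{**})(y^*)$ for all $x^{**}$ and all $y^*\in F$. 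Finally, $S_1x^{**}=Sx^{**}$ whenever $Sx^{**}\in Y$.

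\emph{Step 2 (weak$^*$ continuity).} I apply Lemma~\ref{lem:PLR_Johnson} with the following data:
\begin{itemize}
\item the space $X^*$ in place of $X$, so that the domain is $X^{**}$;
\item the finite-dimensional space $S_1(X^{**})\subseteq Y$;
\item a finite set $A$ spanning $E$.
\end{itemize}
This yields a weak$^*$-to-weak$^*$ continuous operator $\widetilde T\colon X^{**}\to S_1(X^{**})$ with $\widetilde T=S_1$ on $A$, hence on $E$ by linearity, and $\|\widetilde T\|\le\|S_1\|+\delta$. The range is finite-dimensional, so on it the weak topology coincides with the norm topology. Hence $\widetilde T$ is weak$^*$-to-weak continuous into $Y$. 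Put $T:=\widetilde T\restricted_X\in\mathcal F(X,Y)$. By Goldstine's theorem and weak$^*$ continuity, $T^{**}=\widetilde T$.

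It remains to check (1)--(3). For $x^{**}\in E$ and $y^*\in F$,
\[
x^{**}(T^*y^*)=(T^{**}x^{**})(y^*)=(S_1x^{**})(y^*)=(Sx^{**})(y^*),
\]
which is (2). If moreover $Sx^{**}\in Y$, then $T^{**}x^{**}=S_1x^{**}=Sx^{**}$, which is (3). For (1), the upper bound is $\|T\|\le(1+\delta)\|S\|+\delta$. For the lower bound I use the enlarged $E$ and $F$:
\[
\|T\|=\|T^{**}\|\ge (T^{**}x_0^{**})(y_0^*)=(Sx_0^{**})(y_0^*)>\|S\|-2\delta .
\]
Choosing $\delta$ small enough relative to $\|S\|$ and $\varepsilon$ gives $|\|T\|-\|S\||<\varepsilon$.

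The main obstacle is Step 1. We need the classical principle of local reflexivity in its strong form, where $P$ is simultaneously an almost-isometry, fixes $G\cap Y$ exactly, and preserves duality against $F$. The first two conditions together are precisely what makes (3) hold. Step 2, by contrast, is just Lemma~\ref{lem:PLR_Johnson}, the same argument as in Lemma~\ref{Holub-NA1-Lemma5}.
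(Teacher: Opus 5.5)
The paper does not prove this statement. It quotes it from Oja and uses it only through Corollary~\ref{cor:PLR}, so your argument is a genuinely different, self-contained route, and it is correct.

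You compose two standard tools:
\begin{itemize}
\item the classical principle of local reflexivity on the range side, with $P\colon S(X^{**})\to Y$ fixing $S(X^{**})\cap Y$ and preserving the pairing with $F$;
\item Lemma~\ref{lem:PLR_Johnson}, applied with $X^*$ in place of $X$, on the domain side.
\end{itemize}
The output $\widetilde T$ is weak$^*$-to-norm continuous with range in a finite-dimensional subspace of $Y$. Hence $\widetilde T=\sum_i x_i^*\otimes y_i$ with $x_i^*\in X^*$ and $y_i\in Y$, that is, $\widetilde T=T^{**}$ for some $T\in\mathcal F(X,Y)$.

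Exact agreement of $\widetilde T$ with $P\circ S$ on $E$ gives (2) and (3). Your device of adding a norming pair $x_0^{**}\in E$, $y_0^*\in F$ beforehand correctly supplies the lower bound in (1). Lemma~\ref{lem:PLR_Johnson} by itself only gives the upper bound, so this step is needed.

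Your route is economical. It shows that the stated three properties follow from the classical principle together with the same Johnson lemma the paper already uses for the $\NA_1$ case. Quoting Oja gives the result in whatever form and generality is proved there, at the cost of an external reference.

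Your decomposition also exposes something further. In the paper's only application, Corollary~\ref{cor:PLR}, the operator $S$ already maps into $Y$, and only the upper norm bound and (3) are used. There your Step~1 is vacuous ($P=\mathrm{id}$), and Step~2 alone proves the corollary. This is exactly the argument of Lemma~\ref{Holub-NA1-Lemma5}, so Oja's theorem could be bypassed entirely in Section~\ref{section:HolubMujica}.
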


As a consequence of Theorem \ref{thm:PLR}, finite-rank operators from $X^{**}$ into $Y$ can be approximated, in the topology $\tau_c$, by second adjoints of finite-rank operators on $X$.

\begin{corollary}\label{cor:PLR}
	Let $X$ and $Y$ be Banach spaces. Then 
	\[
	B_{\mathcal{F}(X^{**}, Y)} \subseteq  \overline{\{T^{**}\colon T \in B_{\mathcal{F}(X,Y)}  \}}^{\tau_c}.
	\]
\end{corollary}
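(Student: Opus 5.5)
To prove Corollary~\ref{cor:PLR}, I would show that every $S\in B_{\mathcal F(X^{**},Y)}$ is a $\tau_c$-limit of second adjoints $T^{**}$ with $T\in B_{\mathcal F(X,Y)}$. Since $\tau_c$ is the topology of uniform convergence on compact subsets of $X^{**}$, it is enough to do the following. Fix a compact set $K\subseteq X^{**}$ and $\varepsilon>0$, and produce $T\in B_{\mathcal F(X,Y)}$ with $\sup_{x^{**}\in K}\|T^{**}x^{**}-Sx^{**}\|\le\varepsilon$.

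The plan mirrors the proof of Lemma~\ref{Holub-NA1-Lemma5}.

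\textbf{Step 1 (finite net).} Pick $\delta>0$ small and a finite $\delta$-net $A\subseteq K$. Let $E=\spann A$, which is a finite-dimensional subspace of $X^{**}$.

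\textbf{Step 2 (apply Oja's principle).} Regard $S$ as an element of $\mathcal F(X^{**},Y^{**})$ via the canonical embedding $Y\hookrightarrow Y^{**}$; this does not change its norm. Apply Theorem~\ref{thm:PLR} with this $E$, any finite-dimensional $F\subseteq Y^*$ (for instance $F=\{0\}$), and tolerance $\delta$. This gives $R\in\mathcal F(X,Y)$ with $\|R\|<\|S\|+\delta\le1+\delta$. Since $Sx^{**}\in Y$ for every $x^{**}\in X^{**}$, condition (3) of Theorem~\ref{thm:PLR} applies to all of $E$, so $R^{**}x^{**}=Sx^{**}$ for all $x^{**}\in E$, and in particular on $A$. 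This is exactly where the hypothesis that $S$ takes values in $Y$ is used, and it is the only nontrivial point.

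\textbf{Step 3 (normalize).} Put $T:=(1+\delta)^{-1}R\in B_{\mathcal F(X,Y)}$. For $x^{**}\in K$, choose $a\in A$ with $\|x^{**}-a\|\le\delta$. Using $R^{**}a=Sa$ and $\|R^{**}\|=\|R\|\le 1+\delta$,
\[
\|T^{**}x^{**}-Sx^{**}\|\le \|T^{**}(x^{**}-a)\|+\|T^{**}a-Sa\|+\|S(a-x^{**})\|\le \delta+\bigl(1-(1+\delta)^{-1}\bigr)M+\delta,
\]
where $M=\sup_{K}\|x^{**}\|$ bounds $\|Sa\|$. This is below $\varepsilon$ once $\delta$ is small, so $S$ lies in the stated $\tau_c$-closure.

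I expect no real obstacle. The one point to check is that Theorem~\ref{thm:PLR}(3) yields exact equality of $R^{**}$ and $S$ on $E$ (not merely weak$^*$ agreement), and this holds precisely because $S(E)\subseteq Y$.
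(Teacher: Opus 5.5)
Your proposal is correct and follows essentially the same route as the paper: a finite net $A\subseteq K$, then Oja's principle with $E=\spann A$, where condition (3) gives exact agreement of $R^{**}$ and $S$ on $A$ because $S$ takes values in $Y$, and finally rescaling by $(1+\delta)^{-1}$. Your last estimate uses the boundedness of $K$ to control the rescaling error, which makes explicit a step the paper leaves implicit.
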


\begin{proof} Let $S \in B_{\mathcal{F}(X^{**}, Y)}$, let $K \subseteq X^{**}$ be compact, and let $\eps > 0$ be given. Let $A \subseteq K$ be a finite $\eps$-net. Applying Theorem \ref{thm:PLR} with $E=\text{span}(A)$, we have that there exists $T \in \mathcal{F}(X,Y)$ such that $| \|T\| - \|S\| | < \eps$ and $T^{**}\restricted_A = S\restricted_A$. For every $x^{**} \in K$, there exists $a \in A \cap B(x^{**}, \eps)$ such that 
	\begin{align*}
		\|T^{**} x^{**} - S x^{**}\| &\leq \|T^{**} x^{**} - T^{**} a\| + \|T^{**}a - Sa\| + \|Sa - Sx^{**}\| \\
		&\leq \|x^{**} - a\| \|T^{**}\| + \|a-x^{**}\| \|S\| < \eps(1 + \eps) + \eps = \eps^2 + 2 \eps.
	\end{align*}
	Thus, considering the operator $R:= T/ (1+\varepsilon) \in B_{\mathcal{F}(X,Y)}$ finishes the proof.
\end{proof}

Let $X$ and $Y$ be Banach spaces. For simplicity, we use the following notation.
\begin{equation*}
\mathcal{L}_{w^*,w^*}^{(2)}(X, Y) := \{ T \in \mathcal{L}(X^{**}, Y^{**})\colon \exists S \in \mathcal{L}(X,Y) \ \mbox{such that} \ S^{**} = T \}.
\end{equation*}
As in the proof of Theorem \ref{theorem-holub-mujica-NA1}, a suitable analogue of property $(P)$ will play a central role. Since the natural ambient space in the present setting is $\mathcal{L}_{w^*,w^*}^{(2)}(X, Y)$, the space formed by second adjoints, we introduce the corresponding version of property $(P^\star)$.

\begin{definition} Let $X$ and $Y$ be Banach spaces. The pair $(X, Y)$ is said to have \textit{property $\propertystarstar$} if there is a relatively WOT compact $K \subseteq  \mathcal{L}_{w^*,w^*}^{(2)}(X, Y)$ such that 
\begin{equation*}
    0 \in \overline{K}^{\mathrm{WOT}} \ \ \ \ \ \mbox{and} \ \ \ \ \ 0 \not\in \overline{\co}^{\|\cdot\|}(K).
\end{equation*}
\end{definition}

The following lemma is the corresponding version of Lemma \ref{lemmaA-NA1} for property $\propertystarstar$.

\begin{lemma}\label{lem:NA2-propertystar} Let $X$ and $Y$ be Banach spaces. If $\NA_2(X,Y) = \mathcal{L}(X,Y)$, then $(X,Y)$ fails property $\propertystarstar$.
\end{lemma}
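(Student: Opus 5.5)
The plan is to mirror the proof of Lemma~\ref{lemmaA-NA1}, replacing $\weakstar$ by $\mathcal{L}_{w^*,w^*}^{(2)}(X,Y)$ and using a James boundary built from $X^{**}$ and $Y^{***}$. Assume $\NA_2(X,Y)=\mathcal{L}(X,Y)$. The map $S\mapsto S^{**}$ is a linear isometry from $\mathcal{L}(X,Y)$ onto $\mathcal{L}_{w^*,w^*}^{(2)}(X,Y)$, so the latter is a Banach space. We consider
\[
B:=\{y^{***}\otimes x^{**}\colon y^{***}\in S_{Y^{***}},\ x^{**}\in S_{X^{**}}\},
\]
acting on $\mathcal{L}_{w^*,w^*}^{(2)}(X,Y)$ by $T\mapsto y^{***}(Tx^{**})$. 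Each such functional has norm at most one. For $T=S^{**}$, the hypothesis gives $x^{**}\in S_{X^{**}}$ with $\|S^{**}x^{**}\|=\|S\|$, and Hahn--Banach in $Y^{**}$ gives $y^{***}\in S_{Y^{***}}$ with $y^{***}(S^{**}x^{**})=\|S^{**}\|$. Hence $B$ is a James boundary of $\mathcal{L}_{w^*,w^*}^{(2)}(X,Y)$.

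Next, we identify the topology $\sigma(\mathcal{L}_{w^*,w^*}^{(2)}(X,Y),B)$ with the WOT on this space, namely the weak operator topology of $\mathcal{L}(X^{**},Y^{**})$ given by the pairs $(x^{**},y^{***})$. By linearity, the topology generated by $B$ is the one generated by all $y^{***}\otimes x^{**}$. This is exactly the WOT when WOT is read in $\mathcal{L}(X^{**},Y^{**})$ with functionals from $Y^{***}$. If the paper's WOT instead uses only $y^*\in Y^*$, that is, the pairs $x^{**}(S^*y^*)$, we would take $B$ with $y^*\in S_{Y^*}$. This is still a boundary: $\|S^{**}x^{**}\|$ is the supremum over $y^*\in B_{Y^*}$, and $B_{Y^*}$ is weak$^*$ compact, so $y\mapsto (S^{**}x^{**})(y^*)$ is maximized at some $y^*\in B_{Y^*}$. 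I expect this second reading to be the intended one, because it makes $B$ consist of evaluations compatible with second adjoints. Either way, the key point is to choose $B$ and the WOT to match.

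Suppose now that $K\subseteq\mathcal{L}_{w^*,w^*}^{(2)}(X,Y)$ is relatively WOT compact with $0\in\overline{K}^{\wot}$ and $0\notin\overline{\co}^{\|\cdot\|}(K)$. Then $\overline{K}^{\wot}$, taken inside $\mathcal{L}_{w^*,w^*}^{(2)}(X,Y)$, is $\sigma(\cdot,B)$-compact, and it is bounded by uniform boundedness. Pfitzner's theorem (\cite{Pf}, \cite[Theorem 3.121]{FHHMZ}) then makes it weakly compact. The weak and WOT topologies are comparable Hausdorff topologies, so they agree on this set, and therefore $0\in\overline{K}^w\subseteq\overline{\co}^{\|\cdot\|}(K)$ by Mazur's theorem. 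This contradicts $0\notin\overline{\co}^{\|\cdot\|}(K)$, so $(X,Y)$ fails property $\propertystarstar$.

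The main obstacle is the compactness bookkeeping. Relative WOT compactness must mean that the closure is taken inside $\mathcal{L}_{w^*,w^*}^{(2)}(X,Y)$ and is compact there, exactly as in Lemma~\ref{lemmaA-NA1}. Once that convention is fixed, the argument is a verbatim transfer of the earlier lemma.
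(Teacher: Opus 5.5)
The proof is correct as long as you use the first of your two readings, and in that form it is exactly the paper's proof. That is: the boundary $B=\{y^{***}\otimes x^{**}\colon x^{**}\in S_{X^{**}},\ y^{***}\in S_{Y^{***}}\}$, the identification of $\sigma(\mathcal{L}_{w^*,w^*}^{(2)}(X,Y),B)$ with the WOT of $\mathcal{L}(X^{**},Y^{**})$, Pfitzner's theorem, and then Mazur. You also supply details the paper leaves implicit: that $\overline{K}^{\wot}$ is norm bounded (two applications of uniform boundedness, needed before invoking Pfitzner), and that a coarser Hausdorff topology agrees with the weak topology on a weakly compact set. This first reading is the one the paper uses: the WOT is that of $\mathcal{L}(X^{**},Y^{**})$, with functionals taken from $Y^{***}$.

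The second reading, which you say you expect to be intended, does not work, so your claim that the argument goes through ``either way'' is false. With $y^*\in S_{Y^*}$ you need $y^*\mapsto (S^{**}x^{**})(y^*)$ to attain its supremum on $B_{Y^*}$. That map is weak$^*$ continuous only when $S^{**}x^{**}\in Y$, so weak$^*$ compactness of $B_{Y^*}$ gives nothing. In fact, since $(S^{**}x^{**})(y^*)=x^{**}(S^*y^*)$, the set $\{x^{**}\otimes y^*\}$ is a James boundary for the second adjoints if and only if $\NA_1(X,Y)=\mathcal{L}(X,Y)$. This is strictly stronger than the hypothesis. By Theorem~\ref{theorem:c0}, $\NA_2(c_0,c_0)=\mathcal{L}(c_0,c_0)$, yet the diagonal operator $D$ of Example~\ref{ex:strict_inclusion} is not in $\NA_1(c_0,c_0)$. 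Concretely, $D^{**}(1,1,\ldots)=(\alpha_n)\in\ell_\infty$ has norm $1$ but does not attain it on $B_{\ell_1}$. Under that coarser topology, property $\propertystarstar$ is just property $\propertystar$ transported by $S^{**}\mapsto S^*$, and Lemma~\ref{lemmaA-NA1} handles it only under the $\NA_1$ hypothesis. The $\NA_2$ version works precisely because the norm of $S^{**}x^{**}\in Y^{**}$ is always attained by some $y^{***}\in S_{Y^{***}}$, so you must commit to the $Y^{***}$ reading.
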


\begin{proof} Suppose that $\NA_2(X,Y) = \mathcal{L}(X,Y)$. The set 
\begin{equation*}
    B:= \{x^{**} \otimes y^{***}\colon x^{**} \in S_{X^{**}}, y^{***} \in S_{Y^{***}} \} 
\end{equation*}
is then a James boundary for $\mathcal{L}_{w^*,w^*}^{(2)}(X, Y)$. Notice that $\sigma\left(\mathcal{L}_{w^*,w^*}^{(2)}(X, Y), B\right)$ is the WOT topology on $\mathcal{L}_{w^*,w^*}^{(2)}(X, Y)$. Suppose, for contradiction, that there is $K \subseteq \mathcal{L}_{w^*,w^*}^{(2)}(X, Y)$ satisfying property $(P^{**})$. Since $K \subseteq \mathcal{L}_{w^*,w^*}^{(2)}(X, Y)$ is relatively WOT compact, we have that 
\begin{equation*}
    \overline{K}^{\mathrm{WOT}} = \overline{K}^{\sigma\left( \mathcal{L}_{w^*,w^*}^{(2)}(X, Y), B\right)}
\end{equation*}
is WOT compact, which in turn implies that it is $\sigma\left(\mathcal{L}_{w^*,w^*}^{(2)}(X, Y), B\right)$ compact. By Pfitzner's theorem, $\overline{K}^{\mathrm{WOT}}$ is weakly compact, that is, $\overline{\co}^{w}(K)$ is $w$-compact and 
\begin{equation*}
    0 \in \overline{K}^{\mathrm{WOT}} \subseteq \overline{\co}^{w}(K) = \overline{\co}^{\|\cdot\|}(K) 
\end{equation*}
which is a contradiction.
\end{proof}

As in Lemma \ref{lemmaB-NA1}, the failure of property $\propertystarstar$ leads to a sequential closure principle. The next lemma provides the corresponding result in the present setting.

\begin{lemma}\label{lem:NA2-WOTseq} Let $X$ and $Y$ be Banach spaces. If $(X,Y)$ fails property $\propertystarstar$, then every norm-closed convex set $C \subseteq \mathcal{L}_{w^*,w^*}^{(2)}(X, Y)$ is WOT sequentially closed in $\mathcal{L}_{w^*,w^*}^{(2)}(X, Y)$. That is, 
\[
\overline{C}^{\mathrm{WOT-seq}} \cap \mathcal{L}_{w^*,w^*}^{(2)}(X, Y) = C.
\]
\end{lemma}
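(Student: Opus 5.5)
The proof will follow the argument of Lemma~\ref{lemmaB-NA1} essentially verbatim, with $\weakstar$ replaced by $\mathcal{L}_{w^*,w^*}^{(2)}(X,Y)$ and property $\propertystar$ by $\propertystarstar$. We argue by contradiction. Suppose $C\subseteq \mathcal{L}_{w^*,w^*}^{(2)}(X,Y)$ is norm-closed and convex, and that there are $(R_n)\subseteq C$ and $R\in \mathcal{L}_{w^*,w^*}^{(2)}(X,Y)$ with $R_n\to R$ in WOT but $R\notin C$.

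Set
\[
K:=\{R_n-R\colon n\in\N\}.
\]
Since $\mathcal{L}_{w^*,w^*}^{(2)}(X,Y)$ is a linear subspace of $\mathcal{L}(X^{**},Y^{**})$ (indeed $S^{**}-T^{**}=(S-T)^{**}$), we have $K\subseteq \mathcal{L}_{w^*,w^*}^{(2)}(X,Y)$. Because $R_n-R\to 0$ in WOT, we get $0\in\overline{K}^{\mathrm{WOT}}$. Moreover, $K\cup\{0\}$ is the range of a convergent sequence together with its limit, so it is WOT compact. Hence $K$ is relatively WOT compact, and its WOT closure $K\cup\{0\}$ stays inside $\mathcal{L}_{w^*,w^*}^{(2)}(X,Y)$. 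This is the only point needing a little care: we must make sure the relative compactness is understood within the ambient space used in the definition of $\propertystarstar$. Here that causes no difficulty, since the closure is explicit.

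Since $(X,Y)$ fails property $\propertystarstar$, it follows that $0\in\overline{\co}^{\|\cdot\|}(K)$. Translating by $R$, which commutes with taking convex hulls and norm closures, gives
\[
R\in\overline{\co}^{\|\cdot\|}\{R_n\colon n\in\N\}\subseteq C,
\]
because $C$ is convex and norm-closed. This contradicts $R\notin C$. Therefore every norm-closed convex $C$ is WOT sequentially closed in $\mathcal{L}_{w^*,w^*}^{(2)}(X,Y)$, which is the displayed identity. No step here is a real obstacle; the argument is a direct transcription of Lemma~\ref{lemmaB-NA1}.
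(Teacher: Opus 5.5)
Your proposal is correct and follows the paper's proof: the same contradiction argument with $K=\{R_n-R\colon n\in\N\}$, using the failure of $\propertystarstar$ to get $0\in\overline{\co}^{\|\cdot\|}(K)$ and then translating by $R$. Your extra checks, that $K$ lies in the subspace and that $K\cup\{0\}$ is its WOT-compact closure, fill in details the paper leaves implicit.
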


\begin{proof} Suppose that there exist $(R_n) \subseteq C$ and $R \in \mathcal{L}_{w^*,w^*}^{(2)}(X, Y)$ such that $R_n \xrightarrow{\mathrm{WOT}} R$ and $R \not\in C$. Put $K := \{R_n - R\colon n \in \N \}$ which is a relatively WOT compact subset of $\mathcal{L}_{w^*,w^*}^{(2)}(X, Y)$ and satisfies $0 \in \overline{K}^{\mathrm{WOT}}$. If $0 \in \overline{\co}^{\|\cdot\|}(K)$, then $R \in  \overline{\co}^{\|\cdot\|} \{R_n\colon n \in \N\} \subseteq C$, which is a contradiction.
\end{proof}

To obtain the closure property needed later, we use the following lemma, which exploits the separability of $X^{**}$ to pass from sequential closure to full closure. This is again a consequence of Lemma \ref{lem:general_metrizable_SOT}.

\begin{lemma}\label{lem:NA2-SOTseqclosed}
    Let $X$ and $Y$ be Banach spaces. If $X^{**}$ is separable and a bounded subset $C \subseteq \mathcal{L}_{w^*,w^*}^{(2)}(X, Y)$ is SOT sequentially closed in $\mathcal{L}_{w^*,w^*}^{(2)}(X, Y)$, then $C$ is SOT closed in $\mathcal{L}_{w^*,w^*}^{(2)}(X, Y)$.
\end{lemma}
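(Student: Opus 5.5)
This is a direct application of Lemma~\ref{lem:general_metrizable_SOT}, mirroring the proof of Lemma~\ref{lemmaC-NA1}. Take $E = X^{**}$, which is separable by hypothesis. Take $F = Y^{**}$. Let $\mathscr M = \mathcal{L}_{w^*,w^*}^{(2)}(X, Y)$, viewed inside $\mathcal{L}(X^{**},Y^{**})$. We first check that $\mathscr M$ is a linear subspace: the map $S\mapsto S^{**}$ is linear, so $\mathscr M$ is its range and hence a subspace. Here the SOT on $\mathscr M$ is the one inherited from $\mathcal{L}(X^{**},Y^{**})$, i.e.\ pointwise norm convergence on $X^{**}$. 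This is the topology used in the statement and in the later chain of inclusions.

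Next, fix a bounded set $C\subseteq \mathscr M$, say $C\subseteq rB_{\mathscr M}$, that is SOT sequentially closed in $\mathscr M$. We must show that every SOT limit point $R\in\mathscr M$ of $C$ lies in $C$.

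The one point needing care is that $R$ itself need not lie in $C$. So we apply the metrizability on the bounded set $D := C\cup\{R\}$, or more simply on the ball $\max\{r,\|R\|\}B_{\mathscr M}$. By Lemma~\ref{lem:general_metrizable_SOT}, the SOT restricted to this ball is induced by the metric
\[
d(S,T)=\sum_{n\ge1} 2^{-n}\min\{1,\|(S-T)x_n^{**}\|\},
\]
where $(x_n^{**})$ is dense in $B_{X^{**}}$. Since $R$ is in the SOT closure of $C$, and $C$ is contained in this metrizable ball, $R$ lies in the metric closure of $C$. We can therefore pick a sequence $(R_k)\subseteq C$ with $d(R_k,R)\to 0$, which means $R_k\to R$ in SOT.

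Sequential closedness of $C$ in $\mathscr M$ then gives $R\in C$. Hence $C$ is SOT closed in $\mathscr M$.

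No real obstacle is expected. The only subtlety is the one just handled: metrizability holds only on bounded sets, so the argument must be run on a ball containing both $C$ and the candidate limit $R$. This is harmless because $R$ is an operator and therefore bounded.
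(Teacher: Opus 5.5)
Your proposal is correct and is essentially the paper's proof: the paper simply applies Lemma~\ref{lem:general_metrizable_SOT} with $\mathscr M = \mathcal{L}_{w^*,w^*}^{(2)}(X,Y)\subseteq \mathcal{L}(X^{**},Y^{**})$ and $E = X^{**}$, exactly as you do. Your extra step of placing the limit $R$ in a ball is valid but not needed: an SOT limit of operators in $rB_{\mathscr M}$ already has norm at most $r$, because the operator norm is SOT lower semicontinuous.
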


\begin{proof}
    Apply Lemma \ref{lem:general_metrizable_SOT} to $\mathscr M = \mathcal{L}_{w^*,w^*}^{(2)}(X, Y) \subseteq \mathcal{L}(X^{**}, Y^{**})$ and $E=X^{**}$.
\end{proof}

\begin{proof}[Proof of Theorem \ref{Holub-for-NA2}] Suppose that $X^{**}$ is separable and $\NA_2(X, Y) = \mathcal{L}(X,Y)$. By Lemma \ref{lem:NA2-propertystar}, $(X,Y)$ fails property $\propertystarstar$. Using the assumption that the pair $(X^{**},Y)$ has the $\lambda$-BAP, and Corollary \ref{cor:PLR}, 
\begin{equation*}
    B_{\mathcal{L}(X^{**}, Y)} \subseteq \lambda \overline{B_{\mathcal{F}(X^{**},Y)}}^{\tau_c} \subseteq \lambda \overline{\{T^{**}\colon T \in B_{\mathcal{F}(X,Y)}\}}^{\tau_c}.
\end{equation*}

    Note that 
    \[
    \overline{\{T^{**}\colon T \in  B_{\mathcal{F}(X,Y)}  \}}^{\tau_c} \subseteq \overline{\{T^{**}\colon T \in  B_{\mathcal{K}(X,Y)}  \}}^{\tau_c} \subseteq \overline{\{T^{**}\colon T \in  B_{\mathcal{K}(X,Y)}  \}}^{\mathrm{SOT}}.
    \]
It follows from Lemma \ref{lem:NA2-WOTseq} and Lemma \ref{lem:NA2-SOTseqclosed} that 
\begin{align*}
    \{ S^{**} \colon S \in B_{\mathcal{W}(X,Y)} \}  
    &=B_{\mathcal{L}(X^{**}, Y)} \cap \mathcal{L}_{w^*,w^*}^{(2)}(X, Y) \\
    & \subseteq \lambda \overline{\{T^{**}\colon T \in  B_{\mathcal{K}(X,Y)}  \} }^{\mathrm{SOT}} \cap \mathcal{L}_{w^*,w^*}^{(2)}(X, Y) = \lambda \{T^{**}\colon T \in  B_{\mathcal{K}(X,Y)}  \} .
\end{align*}

It follows that $\mathcal{W}(X,Y)=\mathcal{K}(X,Y)$.
\end{proof}

In the case $Y = c_0$ in Theorem \ref{Holub-for-NA2}, we obtain a stronger result. 

\begin{proposition}\label{prop:NA2_when_target_is_c0}
    Let $X$ be a Banach space, and suppose that $X^{**}$ is separable. If $\NA_2 (X,c_0) = \mathcal{L}(X,c_0)$, then $X$ is finite-dimensional. 
\end{proposition}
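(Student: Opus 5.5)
We combine Theorem~\ref{Holub-for-NA2} with Corollary~\ref{cor:Schur}. Suppose $X^{**}$ is separable and $\NA_2(X,c_0)=\mathcal L(X,c_0)$. By Corollary~\ref{cor:Schur}, $X^*$ has the Schur property. Since $X^*$ is separable (as $X^{**}$ is), and a separable dual space with the Schur property that is infinite-dimensional must contain $\ell_1$ (by Rosenthal's $\ell_1$-theorem: every bounded sequence has a weakly Cauchy subsequence or an $\ell_1$-subsequence, and Schur turns weakly Cauchy into norm Cauchy), we learn only that $X^*$ contains $\ell_1$. This alone gives no contradiction, so the separability of $X^{**}$ must be used differently.

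The cleaner route is the following. If $X^*$ is an infinite-dimensional separable space containing $\ell_1$, then $X^{**}$ contains a quotient isomorphic to $\ell_\infty$ restricted appropriately. More directly, $\ell_1\subseteq X^*$ gives, by the Hahn--Banach theorem, a surjection $X^{**}\to\ell_1^*=\ell_\infty$, and $\ell_\infty$ is nonseparable. This contradicts the separability of $X^{**}$. Hence $X^*$ contains no copy of $\ell_1$. So every bounded sequence in $X^*$ has a weakly Cauchy subsequence, which by the Schur property is norm Cauchy. Thus $B_{X^*}$ is norm compact, and $X^*$, hence $X$, is finite-dimensional.

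The only delicate step is the first one: checking that an infinite-dimensional Banach space with the Schur property contains $\ell_1$. If $Z$ has the Schur property and is infinite-dimensional, take a normalized sequence with no norm-convergent subsequence, which exists by Riesz's lemma. By Schur, no subsequence is weakly Cauchy. Indeed, weakly Cauchy sequences are norm Cauchy in Schur spaces: if $(z_n)$ is weakly Cauchy but not norm Cauchy, choose $n_k<m_k$ increasing with $\|z_{n_k}-z_{m_k}\|\ge\varepsilon$; then $(z_{n_k}-z_{m_k})$ is weakly null, so it is norm null by Schur, a contradiction. Rosenthal's theorem then yields an $\ell_1$-subsequence.

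Note that Theorem~\ref{Holub-for-NA2} is not even needed here; the argument rests solely on Corollary~\ref{cor:Schur} plus Rosenthal's theorem and the nonseparability of $\ell_\infty$. The step where the separability of $X^{**}$ enters, namely the restriction surjection $X^{**}\to\ell_\infty$, is the key point, and it is straightforward once $\ell_1\hookrightarrow X^*$ is available.
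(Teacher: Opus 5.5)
Your proof is correct, but it takes a different route from the paper's. Both arguments start from Corollary~\ref{cor:Schur}, so $X^*$ has the Schur property. Both also use the fact that weakly Cauchy sequences in a Schur space are norm Cauchy. Where they differ is in how separability of $X^{**}$ is exploited.

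The paper uses Goldstine's theorem together with weak$^*$-metrizability of $B_{X^{***}}$, which holds because $X^{**}$ is separable. This lets it write each $x^{***}$ as the weak$^*$-limit of a sequence in $X^*$. That sequence is weakly Cauchy, hence norm convergent, so $x^{***}\in X^*$. Thus $X^*$ is reflexive, and a reflexive Schur space is finite-dimensional.

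You instead invoke Rosenthal's $\ell_1$-theorem to show that an infinite-dimensional Schur space contains $\ell_1$. You then use separability of $X^{**}$ only to exclude $\ell_1\hookrightarrow X^*$: restriction would map $X^{**}$ onto a copy of $\ell_\infty$.

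The paper's route is more elementary, since it avoids Rosenthal's theorem. Yours isolates what is actually needed. Your argument shows that $\NA_2(X,c_0)=\mathcal L(X,c_0)$ forces $X$ to be finite-dimensional under the formally weaker hypothesis that $X^*$ contains no isomorphic copy of $\ell_1$.

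On presentation, the write-up should be trimmed:
\begin{itemize}
\item Drop the opening claim that you combine Theorem~\ref{Holub-for-NA2} with the corollary; you later say yourself that it is not needed.
\item Drop the hedging first paragraph.
\item Replace ``a quotient isomorphic to $\ell_\infty$ restricted appropriately'' with ``a quotient isomorphic to $\ell_\infty$''.
\end{itemize}
The self-contained argument is simply: $X^*$ is Schur and infinite-dimensional, so $\ell_1\subseteq X^*$, so $X^{**}$ maps onto $\ell_\infty$, which is impossible.
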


\begin{proof}
    Observe from Corollary \ref{cor:Schur} that $X^*$ has the Schur property. Since $X^{**}$ is separable, if $x^{***} \in X^{***}$, then there is a sequence $(x_n^*)$ in $X^*$ that converges to $x^{***}$ in the weak$^*$ topology. In particular, the sequence $(x_n^*)$ is a weakly Cauchy sequence in $X^*$. Since $X^*$ has the Schur property, $(x_n^*)$ is norm Cauchy. It follows that $x^{***}$ belongs to $X^*$. This shows that every element of $X^{***}$ belongs to $X^*$, so $X^*$ is reflexive. Since every reflexive Schur space is finite-dimensional, it follows that $X^*$ is finite-dimensional, and consequently, $X$ is finite-dimensional. 
\end{proof}

\begin{remark}
The pair $(c_0, c_0)$ shows that, without the separability assumption on $X^{**}$, one cannot in general strengthen the conclusion of Theorem \ref{Holub-for-NA2} to $\mathcal{L}(X,Y) = \mathcal{K}(X, Y)$ and that Proposition~\ref{prop:NA2_when_target_is_c0} fails. Indeed, $\NA_2(c_0, c_0) = \mathcal{L}(c_0, c_0)$ by Theorem \ref{theorem:c0}.
\end{remark}

We can deduce the following general result from Proposition \ref{prop:NA2_when_target_is_c0} using Lemma \ref{lem:heredity}.

\begin{corollary}
    Let $X$ be a Banach space, and suppose that $X^{**}$ is separable. If a Banach space $Y$ contains an isometric copy of $c_0$ and $\NA_2 (X,Y) = \mathcal{L}(X,Y)$, then $X$ is finite-dimensional.
\end{corollary}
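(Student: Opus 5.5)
The plan is to reduce to Proposition~\ref{prop:NA2_when_target_is_c0} by pulling operators into $c_0$ back through the isometric copy of $c_0$ inside $Y$, using Lemma~\ref{lem:heredity}(b).

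Let $\Phi\colon c_0\to Y$ be a linear isometry onto a subspace of $Y$. Fix an arbitrary $T\in\mathcal L(X,c_0)$. Then $\Phi\circ T\in\mathcal L(X,Y)$, and the hypothesis $\NA_2(X,Y)=\mathcal L(X,Y)$ gives $\Phi\circ T\in\NA_2(X,Y)$. Lemma~\ref{lem:heredity}(b), applied with $Z=c_0$, now yields $T\in\NA_2(X,c_0)$. The mechanism behind that lemma is that $(\Phi\circ T)^{**}=\Phi^{**}\circ T^{**}$ and $\Phi^{**}$ is again an isometry, so both second adjoints have the same norm at every point. Since $T$ was arbitrary, we obtain $\NA_2(X,c_0)=\mathcal L(X,c_0)$.

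Because $X^{**}$ is separable, Proposition~\ref{prop:NA2_when_target_is_c0} applies and shows that $X$ is finite-dimensional. This completes the plan.

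There is no real obstacle here. The only point to check is that the isometric embedding really is enough for the transfer. Lemma~\ref{lem:heredity}(b) requires only a linear isometry into $Y$, not a complemented copy, so no projection onto the copy of $c_0$ is needed.
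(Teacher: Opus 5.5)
Your proposal is correct and is exactly the paper's argument: the paper deduces this corollary from Proposition~\ref{prop:NA2_when_target_is_c0} by using Lemma~\ref{lem:heredity}(b) to transfer $\NA_2(X,Y)=\mathcal L(X,Y)$ to $\NA_2(X,c_0)=\mathcal L(X,c_0)$. You are also right that only an isometric embedding of $c_0$ is needed, not a complemented copy.
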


When the range space is weakly sequentially complete, Theorem \ref{Holub-for-NA2} yields the following consequence.

\begin{corollary}
Let $X$ and $Y$ be Banach spaces. Suppose that $X^{**}$ is separable, that $Y$ is weakly sequentially complete, and that the pair $(X^{**},Y)$ has the $\lambda$-BAP. If $\NA_2(X, Y) = \mathcal{L}(X, Y)$, then $\mathcal{L}(X,Y) = \mathcal{K}(X,Y)$.
\end{corollary}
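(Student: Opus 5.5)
The plan is to reduce to Theorem~\ref{Holub-for-NA2} and upgrade its conclusion using weak sequential completeness of $Y$. Under the stated hypotheses, Theorem~\ref{Holub-for-NA2} gives $\mathcal{W}(X,Y)=\mathcal{K}(X,Y)$. So it suffices to show that every $T\in\mathcal{L}(X,Y)$ is weakly compact. Then $\mathcal{L}(X,Y)=\mathcal{W}(X,Y)=\mathcal{K}(X,Y)$.

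To show $T$ is weakly compact, we use separability and Rosenthal's $\ell_1$-theorem. Since $X^{**}$ is separable, $X^*$ is separable. Hence $X$ contains no isomorphic copy of $\ell_1$, because otherwise $X^*$ would map onto $\ell_\infty$, which is nonseparable. By Rosenthal's theorem, every bounded sequence $(x_n)$ in $X$ has a weakly Cauchy subsequence $(x_{n_k})$. Then $(Tx_{n_k})$ is weakly Cauchy in $Y$. Since $Y$ is weakly sequentially complete, it converges weakly to some $y\in Y$. Thus $T(B_X)$ is relatively weakly sequentially compact. By the Eberlein--\v{S}mulian theorem it is relatively weakly compact, so $T\in\mathcal{W}(X,Y)$.

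Every step is a direct citation: Theorem~\ref{Holub-for-NA2}, Rosenthal's $\ell_1$-theorem (as in Remark~\ref{rem:Rosenthal}), and Eberlein--\v{S}mulian. The only point needing an explicit remark is that $X\not\supseteq\ell_1$. This follows from the separability of $X^*$, which is inherited from the separability of $X^{**}$.
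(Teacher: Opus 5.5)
Your proof is correct and follows the paper's argument: $X\not\supseteq\ell_1$, Rosenthal's theorem and weak sequential completeness of $Y$ give $\mathcal{L}(X,Y)=\mathcal{W}(X,Y)$, and then Theorem~\ref{Holub-for-NA2} gives $\mathcal{W}(X,Y)=\mathcal{K}(X,Y)$. You also spell out two steps the paper leaves implicit: why $X$ contains no copy of $\ell_1$ (via separability of $X^*$), and the Eberlein--\v{S}mulian step.
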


\begin{proof}
    Since $X^{**}$ is separable, $X$ does not contain an isomorphic copy of $\ell_1$. It follows from Rosenthal's $\ell_1$-theorem that every bounded sequence in $X$ admits a weakly Cauchy subsequence. Therefore, since $Y$ is weakly sequentially complete, every bounded linear operator $T\colon X \to Y$ is weakly compact, i.e., $\mathcal{L}(X,Y)=\mathcal{W}(X,Y)$. Now, Theorem \ref{Holub-for-NA2} finishes the proof.
\end{proof}

\section{Renorming instability of second-adjoint norm attainment} \label{sec:Ostrovskii}

Ostrovskii \cite{Ostrovskii} proved that every infinite-dimensional Banach space $X$ admits an equivalent norm, denoted by $\vertiii{\cdot}$, under which some operator in $\mathcal{L}((X, \vertiii{\cdot}), (X, \vertiii{\cdot}))$ fails to attain its norm. In this section, we show that an analogous phenomenon occurs for $\NA_2$: one can find a renorming and an operator whose second adjoint fails to attain its norm.

\begin{theorem} \label{Ostrovskii-for-NA1-and-NA2} Let $X$ be an infinite-dimensional Banach space. Then, there exist an equivalent norm $\vertiii{\cdot}$ on $X$ and a projection $P \in \mathcal{L}((X, \vertiii{\cdot}), (X, \vertiii{\cdot}))$ such that $P \not\in \NA_2((X, \vertiii{\cdot}), (X, \vertiii{\cdot}))$.
\end{theorem}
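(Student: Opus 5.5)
The plan is to mimic, inside an arbitrary infinite-dimensional $X$, the mechanism of Lemma~\ref{lem:moving-bump}. There, non-attainment of $S^*$ came from two facts. First, $S^*$ is almost normed only along a \emph{moving bump} $\kappa u+(1-\kappa)b_n$. Second, any functional that fully norms the fixed part $u$ is forced to kill every bump $b_n$.

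Finite-rank projections are useless here. If $P$ has finite rank, then $P^{**}(B_{X^{**}})$ is the weak$^*$ closure of $P(B_X)$, a compact subset of a finite-dimensional space, so $P^{**}$ always attains its norm. Hence the projection must have infinite-dimensional range and kernel. I will instead look for $P$ of the form $P=\id_X-x_0^*\otimes x_0$ with $x_0^*(x_0)=1$, or, if needed, a projection onto the closed span of a suitable subsequence. This is analogous to the operator $z_0\otimes x_0-\id_{\ell_1}$ of Proposition~\ref{prop:l1_l1}, except that the norm will now be chosen so that the bidual no longer supplies a ``Banach limit'' attaining point.

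\textbf{Step 1.} Fix a normalized basic sequence $(x_n)$ in $X$ with bounded extended biorthogonal functionals $(x_n^*)\subset X^*$ (Mazur plus Hahn--Banach), and let $u^*:=\sum_n 2^{-n}x_n^*$.

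\textbf{Step 2.} Define the equivalent norm as a supremum over a bounded, norming, weak$^*$-closed-enough set $G\subset X^*$:
\[
\vertiii{x}:=\max\Bigl\{\varepsilon\|x\|,\ \sup_{n}\bigl|\kappa u^*(x)+(1-\kappa)\tfrac12\bigl(x_{n+1}^*(x)-x_{n+2}^*(x)\bigr)\bigr|,\ \ldots\Bigr\},
\]
where $\varepsilon,\kappa\in(0,1)$ are small parameters and the omitted terms are whatever is needed so that $P$ has norm exactly $1$, with $P^*$ almost attaining it only along the functionals $g_n:=\kappa u^*+(1-\kappa)\tfrac12(x_{n+1}^*-x_{n+2}^*)$.

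\textbf{Step 3.} Use the dual formula
\[
\vertiii{P^{**}x^{**}}=\sup_{g\in G}|x^{**}(P^*g)|
\]
to reduce attainment of $P^{**}$ to the following: some $x^{**}$ in the new bidual ball must satisfy $x^{**}(g_{n_k})\to 1$ along a subsequence, together with the bound $\sup_{g\in G}|x^{**}(g)|\le1$.

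\textbf{Step 4.} Copy the argument of Lemma~\ref{lem:moving-bump}. Since the bumps only contribute $(1-\kappa)\sup_n|x^{**}(b_n^*)|$, where $b_n^*:=\tfrac12(x_{n+1}^*-x_{n+2}^*)$, such an $x^{**}$ must satisfy $x^{**}(u^*)=\vertiii{u^*}_*$. The norm should be built so that this forces $x^{**}(x_n^*)$ to be a common constant for all $n$, hence $x^{**}(b_n^*)=0$, giving the contradiction $\vertiii{P^{**}x^{**}}\le\kappa<1$.

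I expect the main obstacle to be Step~4, that is, arranging the renorming so that the new dual norm restricted to $\spann\{x_n^*\}$ is genuinely $\ell_1$-like near $u^*$. The difficulty is that norming $u^*$ must pin down every coordinate $x^{**}(x_n^*)$. In a general $X$ this is not automatic, because the weak$^*$ closure of $G$ adds extra norming functionals. My first attempt is to take $G$ to be the absolutely convex hull of $\{g_n\}\cup\{\pm x_n^*\}\cup\varepsilon B_{X^*}$ and choose $\varepsilon$ small enough that the $\varepsilon B_{X^*}$ part cannot contribute to norming $u^*$. Then $\vertiii{u^*}_*=\sum 2^{-n}$ is attained only through the $x_n^*$-coordinates, as in the sign argument of Lemma~\ref{lem:moving-bump}. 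If this works, I then expect the computation of $\vertiii{P}=1$, and the check that $P$ is still a projection in the new norm, to be routine.
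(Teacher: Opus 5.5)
There is a genuine gap. The proposal never defines the norm: Step~2 leaves the decisive terms as ``whatever is needed''. It also never fixes the projection. Worse, the target you set cannot be met. You want $P$ to have norm exactly $1$ and aim at the contradiction $\vertiii{P^{**}x^{**}}\le\kappa<1$. But if $P\neq0$ is a projection with $\|P\|=1$, then $\vertiii{Py}=\vertiii{y}=\|P\|$ for every norm-one $y$ in its range, so $P\in\NA\subseteq\NA_2$ automatically. A projection failing $\NA_2$ must have $\|P\|>1$, with the norm approached only along vectors \emph{outside} the range.

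Relatedly, your claim that the kernel must be infinite-dimensional is false. The candidate $\id_X-x_0^*\otimes x_0$ you mention has a one-dimensional kernel, and a projection of exactly that form is what the paper uses. Your finite-rank argument only excludes a finite-dimensional range.

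Step~4 also fails as formulated. The sign argument of Lemma~\ref{lem:moving-bump} needs $\vertiii{u^*}_*=\sum_n 2^{-n}\vertiii{x_n^*}_*$. That requires vectors in the new unit ball with $x_n^*(x)$ close to $1$ for all $n\le N$, with $N\to\infty$. The term $\varepsilon\|x\|$ forbids this for general extensions: in $X=\ell_2$ with $x_n^*=e_n^*$, such vectors have $\|x\|\ge(1-\delta)\sqrt N$, and one gets $\vertiii{u^*}_*<1=\sum_n2^{-n}\vertiii{e_n^*}_*$. So no choice of $\varepsilon>0$ forces the norming elements to be constant on the coordinates.

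The missing idea is Ostrovskii's construction, where non-attainment is certified in the bidual by a Goldstine-net argument rather than by a norming/sign argument in $X^*$. Set $\|x\|_1=\max\{\frac12\|x\|,\sup_j|e_j^*(x)|\}$, let $B$ be the $\|\cdot\|_1$-ball of $L=\ker e_0^*$, and take $1<\alpha_i\uparrow2$. The paper's new unit ball is $U=\overline{\co}\bigl(B\cup\{\pm(e_0+\alpha_ie_i)\}_{i\ge1}\bigr)$ and its projection is $P=\id-e_0^*\otimes e_0$. Then $\vertiii{e_i}=1$, and $\|P\|=2$ is approached only along $P(e_0+\alpha_ie_i)=\alpha_ie_i$.

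Suppose $\vertiii{P^{**}x_0^{**}}=2$. A net $(z_\lambda)$ in the convex hull converging weak$^*$ to $x_0^{**}$ then has $\vertiii{Pz_\lambda}\to2$. This forces its $L$-part and each fixed coefficient of $e_0+\alpha_ie_i$ to vanish. Hence $P^{**}x_0^{**}\in E^{\perp\perp}$ for $E=\overline{\spann}\{e_i\}_{i\ge1}$, it annihilates every $e_j^*$, and $\|P^{**}x_0^{**}\|\le2$. Through the isometry $E\to(E,\|\cdot\|)\oplus_\infty c_0$, the coordinate part of $\|\cdot\|_1$ disappears, giving $\vertiii{P^{**}x_0^{**}}\le\|P^{**}x_0^{**}\|_1=\frac12\|P^{**}x_0^{**}\|\le1<2$.

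This argument uses only that the coordinate functionals are killed in the weak$^*$ limit. It needs no control over which bidual elements norm a given functional, which is exactly what your Step~4 cannot supply in a general space.
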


To prove Theorem~\ref{Ostrovskii-for-NA1-and-NA2}, we adapt Ostrovskii’s construction from \cite[Theorem~2]{Ostrovskii}. By Mazur's basic sequence theorem (see \cite[Theorem 1.4.5] {AlbiacKalton}, for instance) let $(e_i)_{i = 0}^{\infty}$ be a normalized basic sequence in $X$. Let $(e_i^*)_{i = 0}^{\infty} \subseteq X^*$ be Hahn-Banach extensions of its coordinate functionals such that
\begin{equation*}
\sup_{j \in \N \cup \{0\}} \|e_j^*\| < \infty.
\end{equation*}
Consider the following equivalent norm on $X$:
\begin{equation*}
\|x\|_1 := \max \left\{ \frac{1}{2} \|x\|,
\sup\nolimits_{j\in\mathbb{N}\cup\{0\}} |e_j^*(x)| \right\}
\end{equation*}
 for every $x \in X$ and set
\begin{equation*}
L := \ker e_0^* \subseteq X \ \ \ \text{and} \ \ \ B := \{ x \in L \colon \|x\|_1 \leq 1 \}.
\end{equation*}
Let $(\alpha_i)_{i = 1}^{\infty} \subseteq \R$ be such that $1 < \alpha_i < \alpha_{i + 1} < 2$ and $\lim_i \alpha_i = 2$, and let
\begin{equation*}
U := \overline{\co}^{\|\cdot\|_1} \left( B \cup \{ e_0 + \alpha_i e_i \}_{i = 1}^{\infty} \cup \{ -e_0 - \alpha_i e_i \}_{i = 1}^{\infty} \right).
\end{equation*}
The set $U$ is bounded. Moreover,
\begin{equation*}
X = L \oplus \spann \{ e_0 + \alpha_1 e_1 \} \ \ \ \mbox{and} \ \ \ \{ l + t (e_0 + \alpha_1 e_1) \colon
\|l\|_1 + |t| \leq 1 \} \subseteq U.
\end{equation*}
Consequently, $U$ is the closed unit ball of an equivalent norm $\vertiii{\cdot}$ on $X$. For every $i \geq 1$, we have $e_i \in B$, and the functional $e_i^* + (1 - \alpha_i) e_0^*$ has modulus at most one on
\begin{equation*}
B \cup \{ e_0 + \alpha_j e_j \}_{j = 1}^{\infty}
\cup \{ -e_0 - \alpha_j e_j \}_{j = 1}^{\infty}.
\end{equation*}
It follows that, for every $i \geq 1$,
\begin{equation} \label{eq:norm-of-ei}
\vertiii{e_i} = 1.
\end{equation}
Let $P := \id - e_0^* \otimes e_0$. Since $P(B) = B$ and $P(e_0 + \alpha_i e_i) = \alpha_i e_i \in 2U$, we have $P(U) \subseteq 2U$. On the other hand, by \eqref{eq:norm-of-ei}, $\vertiii{ P(e_0 + \alpha_i e_i) } = \alpha_i \to 2$. Therefore,
\begin{equation*} 
\|P\| = \|P^{**}\| = 2.
\end{equation*}

\begin{proof}[Proof of Theorem~\ref{Ostrovskii-for-NA1-and-NA2}] Suppose, toward a contradiction, that $\vertiii{ P^{**} x_0^{**} } = 2$ for  $x_0^{**} \in S_{(X, \vertiii{\cdot})^{**}}$. By the definition of $U$ and Goldstine's theorem, there is a net $(z_\lambda)_{\lambda \in \Lambda}$ in
the convex hull defined by $\co \left( B \cup \{ e_0 + \alpha_i e_i \}_{i = 1}^{\infty}
\cup \{ -e_0 - \alpha_i e_i \}_{i = 1}^{\infty} \right)$ such that
\begin{equation*} \label{eq:zlambda-to-x0}
z_\lambda \xrightarrow{\sigma(X^{**}, X^*)} x_0^{**}.
\end{equation*}
Since $P^{**}$ is weak$^*$ to weak$^*$ continuous, $(Pz_\lambda) \to P^{**}x_0^{**}$ in the weak$^*$ topology. Thus, 
\[
2 = \vertiii{ P^{**}x_0^{**} } \leq \liminf_\lambda \vertiii{ Pz_\lambda } \leq \limsup_\lambda \vertiii{Pz_\lambda} \leq 2,
\]
so 
\begin{equation*} \label{eq:maximizing-net}
\vertiii{ P z_\lambda } \to 2.
\end{equation*}
For every $\lambda \in \Lambda$, we may write
\begin{equation*} \label{zn-expression}
z_\lambda = l_\lambda + \sum_{i = 1}^{\infty} \gamma_{\lambda,i} (e_0 + \alpha_i e_i)
- \sum_{i = 1}^{\infty} \beta_{\lambda,i} (e_0 + \alpha_i e_i)
\end{equation*}
where $l_\lambda \in L$, the two families $(\gamma_{\lambda,i})$, $(\beta_{\lambda,i})$ of nonnegative coefficients are finitely supported and
\begin{equation} \label{condition-in-zn}
\|l_\lambda\|_1 +
\sum_{i = 1}^{\infty} \gamma_{\lambda,i} +
\sum_{i = 1}^{\infty} \beta_{\lambda,i} \leq 1.
\end{equation}
By \eqref{eq:norm-of-ei} and \eqref{condition-in-zn},
\begin{equation*}
\vertiii{ P z_\lambda } \leq \|l_\lambda\|_1 + \sum_{i = 1}^{\infty} (\gamma_{\lambda,i} + \beta_{\lambda,i}) \alpha_i \leq 2 - \|l_\lambda\|_1 - \sum_{i = 1}^{\infty} (\gamma_{\lambda,i} + \beta_{\lambda,i}) (2 - \alpha_i).
\end{equation*}
Since $\vertiii{Pz_\lambda} \to 2$, for every $i \geq 1$,
\begin{equation} \label{condition-in-ln}
\|l_\lambda\|_1 \to 0, \ \ \ \gamma_{\lambda,i} \to 0 \ \ \ \mbox{and} \ \ \ \beta_{\lambda,i} \to 0.
\end{equation}
Now consider the closed subspace
\begin{equation*}
E := \overline{\spann} \bigl( \{ e_i \}_{i = 1}^{\infty} \bigr) \subseteq X
\end{equation*}
and define
\begin{equation*}
u_\lambda:=  Pz_\lambda -l_\lambda =  \sum_{i=1}^\infty (\gamma_{\lambda,i} - \beta_{\lambda,i}) \alpha_ie_i \in E.
\end{equation*}
Then 
\begin{equation*} \label{eq:ulambda-to-x0}
u_\lambda \xrightarrow{\sigma(X^{**}, X^*)} P^{**}x_0^{**}.
\end{equation*}
Thus, $P^{**}x_0^{**} \in E^{\perp \perp} \equiv E^{**}$. Moreover, 
\begin{equation*} \label{eq:old-norm-bound}
\|u_\lambda\|
\leq \sum_{i = 1}^{\infty}
(\gamma_{\lambda,i} + \beta_{\lambda,i}) \alpha_i \leq 2\sum_{i=1}^\infty (\gamma_{\lambda,i} +\beta_{\lambda,i}) \leq  2,
\end{equation*}
and \eqref{condition-in-ln} yields that 
\begin{equation*} \label{eq:coordinate-limits}
e_j^* \left( u_\lambda \right) = (\gamma_{\lambda,j} - \beta_{\lambda,j}) \alpha_j \to 0 \quad (j\geq 1).
\end{equation*}
It follows that $\|P^{**}x_0^{**}\| \leq \liminf_\lambda \|u_\lambda\|\leq 2$ and $(P^{**}x_0^{**})(e_j^* \restricted_E)=0$ for every $j \geq 1$.

The map 
\[
J_E \colon (E, \|\cdot\|_1) \to (E,\|\cdot\|) \oplus_\infty c_0, \quad J_E (y)= \left( \frac{1}{2}y, (e_i^* (y))_{i=1}^\infty  \right) 
\]
is a well-defined isometry, since the coefficient sequence $(e_i^*(y))_{i=1}^\infty$ belongs to $c_0$ for every $y\in E$. Hence its second adjoint $J_E^{**}$ is an isometry and 
\[
\|y^{**}\|_1 = \max \left\{ \frac{1}{2} \|y^{**}\|, \sup_{i \geq 1} |y^{**}(e_i^* \restricted_E)| \right\} \qquad (y^{**} \in E^{**}).
\]
In particular,
\[
\|P^{**}x_0^{**}\|_1 = \max \left\{ \frac{1}{2} \|P^{**}x_0^{**}\|, \sup_{i \geq 1} |(P^{**}x_0^{**})(e_i^* \restricted_E)| \right\} = \frac{1}{2}\|P^{**}x_0^{**}\| \leq 1.
\]
Since $B_{(E,\|\cdot\|_1)} \subseteq B \subseteq U$, the inclusion $\iota \colon (E,\|\cdot\|_1)\to (X,\vertiii{\cdot})$ satisfies $\|\iota\|\leq 1$. Thus, $\|\iota^{**}\|\leq 1$; hence 
\[
\vertiii{P^{**} x_0^{**}} \leq \|P^{**}x_0^{**}\|_1 \leq 1,
\]
which contradicts $\vertiii{ P^{**} x_0^{**} } = 2$.
\end{proof}

\begin{remark}
    To the best of the authors' knowledge, it is unknown whether there exists an infinite-dimensional Banach space $X$ such that $\QNA(X,X)=\NA(X,X)$ (see \cite[Problem 7.11]{CCJM}). As a consequence of Theorem \ref{Ostrovskii-for-NA1-and-NA2}, every infinite-dimensional Banach space $X$ admits an equivalent norm $\vertiii{\cdot}$ such that 
    \[
    \QNA((X,\vertiii{\cdot}),(X,\vertiii{\cdot}) )\neq \mathcal{L}((X,\vertiii{\cdot}),(X,\vertiii{\cdot})).
    \]
In particular, if $\QNA(X,X)=\mathcal{L}(X,X)$ holds for some infinite-dimensional Banach space $X$, then this equality can be destroyed by a suitable equivalent renorming.
\end{remark}

\noindent 
\textbf{Acknowledgements}: The authors would like to thank Audrey Fovelle and Javier Merí for fruitful conversations on the topic of this manuscript, Gilles Godefroy for kindly answering several questions, and Vladimir Kadets for suggesting Corollary~\ref{cor:finite-head-norming}.

\noindent 
\textbf{Funding}:
S.~Dantas has been supported by the grants PID2021-122126NB-C31 and PID2021-122126NB-C33 funded by MICIU/AEI/ 10.13039/ 501100011033 and by ERDF/EU.
M.~Jung was supported by the research fund of Hanyang University (HY-202500000003346).
M.~Mart\'{\i}n has been supported by the grant PID2021-122126NB-C31 funded by MICIU/AEI/10.13039/501100011033 and ERDF/EU, grant ``Mar\'{\i}a de Maeztu'' Excellence Unit IMAG funded by MICIU/AEI/ 10.13039/ 501100011033 with reference CEX2020-001105-M, and by grant FQM-0185 funded by Junta de Andaluc\'{\i}a.

\end{document}